\def\paragraph{\@startsection{paragraph}{4}%
  \z@\z@{-\fontdimen2\font}%
  {\normalfont\bfseries}}
\DeclareMathOperator{\Sp}{Sp}
\DeclareMathOperator{\Top}{top}
\DeclareMathOperator{\pullback}{pb}
\DeclareMathOperator{\cell}{--cell--}
\DeclareMathOperator{\id}{Id}
\DeclareMathOperator{\h}{H}
\DeclareMathOperator{\B}{B}
\DeclareMathOperator{\ho}{Ho}
\DeclareMathOperator{\leftmod}{--mod}
\DeclareMathOperator{\ext}{Ext}
\DeclareMathOperator{\homSSS}{Hom}
\renewcommand{\hom}{\homSSS}
\DeclareMathOperator{\LimSSS}{lim}
\renewcommand{\lim}{\LimSSS}
\DeclareMathOperator{\colim}{colim}
\DeclareMathOperator{\ch}{Ch}
\newcommand{\sphspec}{{\mathbb{S}}}
\newcommand{\Id}{{\mathrm{Id}}}
\newcommand{\bQ}{{\mathbb{Q}}}
\newcommand{\bU}{{\mathbb{U}}}
\newcommand{\bP}{{\mathbb{P}}}
\newcommand{\cF}{\mathcal{F}}
\newcommand{\cE}{\mathcal{E}}
\newcommand{\cO}{\mathcal{O}}
\newcommand{\lra}{\longrightarrow}
\newcommand{\cA}{\mathcal{A}}
\newcommand{\efp}{E\mathcal{F}_+}
\newcommand{\etf}{\widetilde{E}\mathcal{F}}
\newcommand{\sm}{\wedge}
\newcommand{\Q}{\mathbb{Q}}
\newcommand{\torus}{{ \mathbb{T} }}
\newcommand{\algmodel}{\mathcal{A}(\mathbb{T})}
\newcommand{\diffalgmodel}{d \mathcal{A}(\mathbb{T})}
\newcommand{\TSP}{{\torus \Sp^\ocal}}
\newcommand{\adjunct}{{ \,\,\raisebox{-0.1\height}{$\overrightarrow{\longleftarrow}$}\,\, }}
\newcommand{\smashprod}{\wedge}
\newcommand{\co}{\colon \!}
\newcommand{\fibrep}{{ \widehat{f} }}
\newcommand{\cofrep}{{ \widehat{c} }}
\newcommand{\zz}{{ \mathbb{Z} }}
\newcommand{\qq}{{ \mathbb{Q} }}
\newcommand{\cscr}{{ \mathscr{C} }}
\newcommand{\pscr}{{ \mathscr{P} }}
\newcommand{\acal}{{ \mathcal{A} }}
\newcommand{\bcal}{{ \mathcal{B} }}
\newcommand{\ccal}{{ \mathcal{C} }}
\newcommand{\dcal}{{ \mathcal{D} }}
\newcommand{\ecal}{{ \mathcal{E} }}
\newcommand{\fcal}{{ \mathcal{F} }}
\newcommand{\ocal}{{ \mathcal{O} }}
\newcommand{\pcal}{{ \mathcal{P} }}
\newcommand{\ucal}{{ \mathcal{U} }}
\newtheorem{theorem}{Theorem}[subsection]
\newtheorem{proposition}[theorem]{Proposition}
\newtheorem{corollary}[theorem]{Corollary}
\newtheorem{lemma}[theorem]{Lemma}
\newtheorem*{thm}{Theorem}
\newtheorem{definition}[theorem]{Definition}
\newtheorem{rmk}[theorem]{Remark}
\begin{document}

\title{Rational $SO(2)$--Equivariant Spectra}
\author{D. Barnes}
\address{Pure Mathematics Research Centre, 
Queen's University Belfast, University Road, Belfast BT7 1NN, Northern Ireland, UK}
\email{d.barnes@qub.ac.uk} 
\author{J.P.C. Greenlees}
\address{School of Mathematics and Statistics, Hicks Building, 
Sheffield S3 7RH. UK.}
\email{j.greenlees@sheffield.ac.uk}
\author{ M. K\c{e}dziorek}
\address{MATHGEOM \\
  \'Ecole Polytechnique F\'ed\'erale de Lausanne \\
 CH-1015 Lausanne \\
 Switzerland}
\email{magdalena.kedziorek@epfl.ch} 
\author{B. Shipley}
\address{Department of Mathematics, Statistics, and Computer Science, University of Illinois at
Chicago, 508 SEO m/c 249,
851 S. Morgan Street,
Chicago, IL, 60607-7045, USA}
  \email{bshipley@math.uic.edu}

\date{}

\begin{abstract}
\noindent
We prove that the category of rational $SO(2)$--equivariant spectra has a simple
algebraic model. Furthermore, all of our model categories and Quillen equivalences are monoidal,
so we can use this classification to understand
ring spectra and module spectra via the algebraic model.
\end{abstract}

\subjclass[2010]{ 55N91, 55P42, 55P60 }
\keywords{equivariant spectra, model categories, right Bousfield localization,
ring spectra, algebraic models}

\maketitle

\setcounter{tocdepth}{2}
\pdfbookmark[1]{Contents}{toc}
\tableofcontents

\section{Introduction}
\paragraph{Rational equivariant cohomology theories}
This paper is a contribution to the study of equivariant cohomology theories, and gives a rather complete analysis for one class of theories.
To start with, $G$-equivariant cohomology theories are represented by $G$-spectra, so that the category of $G$-equivariant cohomology theories and
stable natural transformations between them is equivalent to the homotopy category of $G$-spectra, and it is natural to study the homotopy theory of
$G$-spectra. One cannot expect a complete analysis of either cohomology theories or spectra integrally, but if we rationalize,  things are greatly simplified,
whilst valuable geometric and group theoretic structures
remain. Henceforth we restrict attention to rational cohomology
theories and rational spectra without further comment. The general conjecture is
that there is a nice algebraic model for rational $G$-spectra. More precisely that there is an abelian category $\cA (G)$ and a  Quillen equivalence
$$\mbox{$G$-spectra}\simeq d\cA(G)$$
where $d\cA(G)$ consists of differential graded objects of $\cA(G)$. The category $\cA (G)$ is of injective dimension equal to the rank of $G$ and
of a form that is easy to use in calculations. Of course one would like to prove the Quillen equivalence reflects as much structure as possible. The conjecture
is known for quite a number of groups in some form, and we refer
to \cite{tnqcore} for a summary of what is known. In the present paper we are  concerned with the specific case of the circle group, and with giving a zig-zag of
Quillen equivalences which are symmetric monoidal.

\bigskip
\paragraph{The circle group}
We will entirely focus on the circle group, because it plays a critical role in understanding the case of all other infinite compact Lie groups. As an added benefit,
 it is significantly simpler than any other group, meaning that we can focus on the critical features without being distracted by extraneous complication.
 We refer to the group as $SO(2)$, because we have in mind as first
 applications its role as a subgroup of $O(2)$ (in \cite{barneso2})
 and $SO(3)$  (in  \cite{KedziorekSO(3)}).

Our main  result is as follows.

\begin{thm}
The model category of rational $SO(2)$--equivariant spectra is Quillen
equivalent to the algebraic model
$d \acal(SO(2))_{dual}$.
Furthermore these Quillen equivalences are all symmetric monoidal,
hence the homotopy category of rational $SO(2)$--equivariant spectra
and the homotopy category of the algebraic model $D( \acal(SO(2)))$ 
are equivalent as symmetric monoidal categories.
\end{thm}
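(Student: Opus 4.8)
The plan is to exhibit a finite zig-zag of symmetric monoidal Quillen equivalences
$$
\mbox{rational $SO(2)$-spectra}\ \simeq\ \cdots\ \simeq\ d\acal(SO(2))_{dual}
$$
in which every functor is a (lax or strong) symmetric monoidal left or right Quillen functor, and then to invoke the general principle that such a zig-zag induces an equivalence of homotopy categories which is moreover symmetric monoidal: the derived functors of the individual Quillen equivalences compose, and the monoidal structure descends because each derived functor is (op)lax monoidal and, being part of a Quillen equivalence, the structure maps become isomorphisms on homotopy categories. Thus the theorem splits into (i) arranging all intermediate categories and functors to be symmetric monoidal, (ii) identifying the correct monoidal structure on the algebraic side — which is why the \emph{dual} model $d\acal(SO(2))_{dual}$, rather than the naive $\acal(SO(2))$, appears — and (iii) checking monoidality step by step, the delicate steps being the cellularization and the passage from topology to algebra.

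On the topological side I would work in a symmetric monoidal model of orthogonal $SO(2)$-spectra, rationalize, and then perform a right Bousfield localization (cellularization) at the set of cells $SO(2)/H_+$ for $H$ a closed subgroup. Here one must verify that this cellularization is compatible with the smash product, i.e. that the cellular objects form a monoidal ideal closed under the relevant smash products, so that the cellular model category inherits a symmetric monoidal model structure and the cellularization adjunction is monoidal. Next, using the idempotent splitting of the rational $SO(2)$-equivariant Burnside ring together with the isotropy-separation (gluing) cofibre sequence along the family $\mathcal{F}$ of finite subgroups and the whole group, one identifies the cellularized category with a category of modules over a diagram of commutative ring $SO(2)$-spectra built from $E\mathcal{F}_+$, $\widetilde{E\mathcal{F}}$ and the Euler classes; the monoidal structure survives because the constituent ring spectra are commutative and the functors involved are (de)restriction, inflation, and smashing with idempotents.

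To cross into algebra I would apply the monoidal form of Shipley's comparison between commutative $H\qq$-algebra spectra and commutative differential graded $\qq$-algebras (with the corresponding statement for module categories), together with formality over $\qq$ of the graded rings that occur, to replace each topological ring-spectrum module category by a category of differential graded modules over an explicit graded-commutative $\qq$-algebra. Reassembling these along the same gluing diagram yields a category of differential graded objects of an abelian category, and a final explicit change of rings — dualizing the structure maps of the diagram so that the tensor product of the algebraic model becomes visible — identifies this with $d\acal(SO(2))_{dual}$. Since $\acal(SO(2))$ has finite injective dimension, the homotopy category of this model is the derived category $D(\acal(SO(2)))$, which gives the last assertion of the theorem.

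The main obstacle is the monoidal bookkeeping of steps (i)–(iii), and within that the cellularization: a right Bousfield localization is not automatically monoidal, so one must check carefully that the class of cellular objects is closed under the smash products that occur, and it is precisely in order to make the final comparison symmetric monoidal that one cannot use the standard algebraic model but must pass to the dual one. Pinning down the symmetric monoidal structure on $\acal(SO(2))_{dual}$ and matching it through the algebraicization — ensuring that the unit, the tensor, and the associativity and symmetry constraints all correspond under each Quillen equivalence — is where the bulk of the work lies.
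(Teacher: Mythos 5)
Your outline captures the right overall shape (a zig-zag of symmetric monoidal Quillen equivalences, isotropy separation over the family $\fcal$, Shipley's algebraicization, a formality argument, and careful monoidal bookkeeping at the cellularization steps), but two of your concrete steps have genuine gaps.

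First, you propose to identify the cellularized category with modules over a diagram of \emph{commutative ring $SO(2)$-spectra} built from $E\fcal_+$ and $\widetilde{E}\fcal$, and you justify monoidality by saying ``the constituent ring spectra are commutative.'' In orthogonal $SO(2)$-spectra the spectrum $\widetilde{E}\fcal$ is \emph{not} a commutative ring (McClure), so the corner of your diagram governing the behaviour of the full group cannot be taken to be a category of modules over a commutative ring spectrum; this is precisely why the paper works with a $\pscr$-diagram of \emph{model categories} (the vertex for $\torus$ being the Bousfield localization $L_{S_\bQ\smashprod\widetilde{E}\fcal}\TSP$, and the Tate corner a further localization of $DE\fcal_+\leftmod$) rather than with a diagram of rings, and why the formality argument later has to be run for modules over the single commutative dga coming from $DE\fcal_+$ rather than for a whole diagram of commutative rings. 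Second, you apply Shipley's $H\qq$-algebra/dga comparison directly to the topological ring-spectrum module categories, but that theorem is non-equivariant: before it can be used you must first remove the equivariance, via the fixed-point/inflation Quillen equivalences of Greenlees--Shipley (e.g.\ $DE\fcal_+\leftmod$ in $\torus$-spectra is Quillen equivalent to $DE\fcal_+^{\torus}\leftmod$ in spectra, and $L_{S_\bQ\smashprod\widetilde{E}\fcal}\TSP$ to rational spectra via $\varepsilon^*$ and $(-)^{\torus}$, the latter acting as geometric fixed points). This step is not a formality --- it is one of the main inputs --- and it is entirely absent from your plan. A smaller point: the subscript in $d\acal(SO(2))_{dual}$ refers to the \emph{dualizable} model structure (generating cofibrations built from strongly dualizable objects of $\acal$), needed because the injective model structure is not monoidal; it does not arise from ``dualizing the structure maps of the diagram.''
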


The algebraic model is described in Section \ref{sec:cyclicmodel}  below.

\bigskip
\paragraph{Rings and commutative rings}
Our main theorem establishes a zig-zag of symmetric monoidal Quillen equivalences
between the symmetric monoidal model category of rational $SO(2)$-spectra
and the symmetric monoidal model category $d\cA (SO(2))_{dual}$.
In particular
we may use \cite[Theorem 3.12]{ss03monequiv} to see that the model category of ring spectra is Quillen equivalent to the category of monoids in $\cA (SO(2))$.
This means that a ring object $R_a$ in $d\cA (SO(2))$ corresponds to a  ring object $R_{top}$ in $SO(2)$-spectra in a homotopy invariant fashion.
Furthermore, the category of $R_a$-modules is Quillen equivalent to the category of $R_{top}$-modules.

However, it is essential to emphasize that if $R_a$ is commutative, it does not follow that $R_{top}$ has a commutative model. The reason is that
the correspondence between $R_a$ and $R_{top}$ involves fibrant and cofibrant approximations and  these approximations are
only in the category of rings rather than in the category of commutative rings. This is inevitable, since for example the ring spectrum $R_{top}=\etf$ corresponds
to a small and explicit commutative ring $R_a$. It is well known
\cite{McClureTate}
that $\etf$ is not a commutative ring in orthogonal $SO(2)$-spectra.

In \cite{gre05} it is shown that if $C$ is a generalized elliptic curve over a $\Q$-algebra, there is an associated $SO(2)$-spectrum $EC$ representing
elliptic cohomology. Indeed the proof proceeds by writing down an object $EC_a$ in $\cA (SO(2))$, and taking $EC=EC_{top}$ to be the corresponding $SO(2)$-spectrum.
It is transparent from the construction that $EC_a$ is a commutative ring in $\cA (SO(2))$, and it is a consequence of the present work that $EC$ is a ring
spectrum. As commented above, this does not prove that $EC$ is a commutative ring spectrum.

\bigskip
\paragraph{Contribution of this paper}

To place the contribution of this paper to the study of rational $SO(2)$-spectra, we need to give a little history.
A description of the homotopy category of rational $SO(2)$-spectra
was given in \cite{gre99}. This took the form of an equivalence
\[\ho(\mbox{$SO(2)$-spectra})\simeq D(\cA (SO(2)))\]
for the abelian category $\cA (SO(2)$ (described in Section \ref{sec:cyclicmodel} below). Since $\cA (SO(2))$ is rather simple and of injective dimension 1
 this gives a practical means for calculating the space $[X,Y]^{SO(2)}_*$ of maps
for arbitrary (rational) $SO(2)$-spectra $X,Y$ up to extension. Since $\cA (SO(2))$ is (in a sense that will appear later) evenly graded, the extensions split, and
so \cite{gre99} gives a complete description of the category $\ho(\mbox{$SO(2)$-spectra}$. Unfortunately, \cite{gre99} claimed to have proved this was an
equivalence of triangulated categories, but there is a gap in the
argument for this (this was pointed out by Patchkoria, who has now
given (\cite{Patchkoria12} and more recent work) an illuminating
systematic analysis of lifting equivalences of homotopy categories to
ones that preserve triangulations and other structures, giving
conditions in terms of injective dimension and sparsity; the argument
for $\cA (SO(2))$ in \cite{gre99} is of this type, but outside the
Patchkoria range). The article \cite{shi02} showed that in this case a triangulated
equivalence of homotopy categories lifts to a Quillen equivalence of
model categories. Work then began to give an algebraic model for the
homotopy category of $G$-spectra for a torus $G $ (eventually leading
to \cite{tnqcore}); it was soon apparent that the only way to approach this is to first prove a Quillen equivalence between $G$-spectra and $d\cA (G)$ and then deduce the equivalence of homotopy categories as a consequence.  This general project has taken some time, and has a complicated history of its own \cite{gscell, gsmodules, gsfixed, tnqcore} but the special case of the circle is much simpler than the general case, and quite easily explained. The underlying strategy applied in \cite{tnqcore} is the same as that  adopted here for the circle group, but there are some significant differences of implementation adopted from \cite{barnesthesis, barneso2, kedziorekthesis, KedziorekSO(3)}.

Meanwhile, work began on the group $O(2)$ (culminating in the model \cite{barneso2})  and the group $SO(3)$ (culminating in the model \cite{kedziorekthesis}).
Those models depended  on the Quillen equivalence for $SO(2)$; they originally built upon \cite{tnqcore}, but the technical context adopted here has advantages for them. 
The proof for the general torus is considerably more complicated than that for the circle,  principally because
$SO(2)$ has only two connected subgroups (namely the trivial group and the whole group) rather than infinitely many for higher dimensional tori. Accordingly, it is
much easier to see the essential structure  of the argument in the case of the circle. It is therefore desirable to give a separate account for $SO(2)$ to show the simplicity
of the argument, and  to provide the input to the work on $O(2)$ and $SO(3)$.

Perhaps a more important reason for publishing a separate account for
$SO(2)$ is that at present we can prove more for the circle group than
for a general torus.
The category of $G$-spectra is a monoidal model category, and  $\cA (G)$ is a monoidal abelian category. One would like to have a monoidal equivalence between $G$-spectra and $d\cA (G)$.
Of course this requires more care than a simple Quillen equivalence and some more delicate analysis. As the first step, one needs a monoidal model structure on $d\cA (G)$. The abelian category $\cA (G)$ does not have enough projectives and the  model structure on $d\cA (G)$ used in earlier work, i.e. the injective model structure, is certainly not monoidal. On the other hand for $G=SO(2)$,   \cite{barnesmonoidal} has given a model structure based on dualizable objects which is monoidal;
this relies on some explicit constructions in $\cA (SO(2))$ from \cite{gre99} that are not made explicit in \cite{gretnq1, gre12standardalgebra} for higher tori. It is expected that a similar construction will work for other groups, but additional work will be necessary. Once a monoidal model structure is defined on $d\cA (G)$, one needs to ensure that
all Quillen pairs making up  the equivalence are monoidal.

\bigskip
\paragraph{The Hasse-Tate isotropy square}
The overarching strategy for building an algebraic model  is to break
the category of $SO(2)$-spectra into parts, give algebraic models of
each part, and then assemble an algebraic model for all spectra from
the algebraic models of the parts in the corresponding way.

To analyse an individual $SO(2)$-spectrum it is natural to use isotropy separation, to assemble the spectrum from information at the family $\cF$ of
finite subgroups and the information at $SO(2)$ itself.This can be implemented using the Tate square
\[
\xymatrix@R=2pc @C=2pc{
X \ar[r] \ar[d]&X\sm \etf \ar[d]\\
F(\efp, X) \ar[r] & F(\efp, X)\sm \etf
}
\]
which expresses $X$ as the homotopy pullback of its $\cF$-completion,  $F(\efp, X)$,  and its localization away from $\cF$, $X\sm \etf$,
over the Tate object,  $F(\efp , X)\sm \etf$. Thus $X$ is the homotopy pullback of a  punctured square diagram (i.e., of shape $\bullet \longrightarrow \bullet \longleftarrow \bullet$).
The basic idea is to do this at the level of model categories. We would like to assemble the category of all $SO(2)$-spectra from
the category of $\cF$-complete objects and objects localized away from $\cF$. The way we do it here is to take suitable model categories of
$\cF$-complete spectra, of spectra away from $\cF$ and Tate spectra
and then construct a model structure on the category $S^{\bullet} \leftmod$
of diagrams of such objects: a cellularization
($K_{top} \cell S^{\bullet} \leftmod$)  of this model category
of punctured-square diagrams is then shown to be Quillen equivalent to
the original category of $SO(2)$-spectra
essentially using the fact that the Tate square is a homotopy pullback.  The machinery of \cite{gsmodules} was built for this purpose.

The alternative adopted in \cite{tnqcore} is to say that the category of $SO(2)$-spectra is equivalent to the category of $S$-modules in $SO(2)$-spectra, where $S$ is the sphere spectrum.
We then consider the special case of the Tate square in which $X=S$ and say that $S$ is the pullback of a diagram of rings, so that the module category
of $S$ is Quillen equivalent to a cellularization of the model category of modules over the pullback diagram of rings.

In the present paper, several of the monoidal functors are taken from \cite{kedziorekthesis, KedziorekSO(3), barnesthesis, barneso2} and since  we work in a context where $\etf$ is not a commutative ring we adopt their methods for the formality argument.

\bigskip
\paragraph{Summary of the zig-zag of Quillen equivalences}

To illustrate the zig-zag of Quillen equivalences we present a diagram
which shows main steps of this comparison. The first step moving into
categories of diagrams was suggested in the previous subsection, and
the other steps will be described in the body of the paper. 
The reader may wish to refer to this diagram now, but the notation will be introduced as we proceed.
In the diagram left Quillen functors are placed on the left
and $\mathbb{T}:=SO(2)$. References to specific results are given on the left, and on the right there is an indication of the ambient category.

\[
\xymatrix@R=0.4pc @C=7pc{
 & L_{S_\bQ}\TSP
\ar@<-1ex>[dd]_{S^\bullet \smashprod - } & **[l] \mathrm{in\ } \TSP 
\\ **[r] \mathrm{Proposition\ } \ref{prop:Smodulesequivalence} \\
 & K_{\Top} \cell S^\bullet \leftmod
\ar@<-1ex>[uu]_{\mathrm{pb}}
\ar@<+1ex>[dd]^{(-)^{\torus}} & **[l] \mathrm{in\ } \TSP
\\ **[r] \mathrm{Corollary\ } \ref{cor:removeequivariancecell} \\
& K_{\Top}^{\torus} \cell S_{\Top}^\bullet \leftmod
\ar@<+1ex>[uu]^{a_{\#}}
\ar@<+0ex>[dd]^{\mathrm{of\ Quillen\ equivalences}} & **[l] \mathrm{in\ Sp}^\ocal
\\ **[r] \mathrm{Corollary\ } \ref{cor:gettingtoalgebra} \\
& K_t \cell S_t^\bullet \leftmod
\ar@<+0ex>[uu]^{\mathrm{zig-zag}}
\ar@<+0ex>[dd]^{\mathrm{of\ Quillen\ equivalences}} & **[l] \mathrm{in\ Sp}^\ocal
\\ **[r] \mathrm{Section\ } \ref{subsec:ramod} \\
& K_a  \cell S_a^\bullet \leftmod
\ar@<+0ex>[uu]^{\mathrm{zig-zag}}
\ar@<+1ex>[dd]^{\Gamma} & **[l] \mathrm{in\ Ch}_\bQ
\\ **[r] \mathrm{Proposition\ } \ref{prop:conclusion} \\
& d \acal_{dual}
\ar@<+1ex>[uu]^{l^*} & **[l] \mathrm{in\ Ch}_\bQ
}
\]

\bigskip
\paragraph{Notation}
From now on we will write $\torus$ for the group $SO(2)$. We also stick to the convention of drawing the left adjoint above the right one in any adjoint pair.

\section{The algebraic model
\texorpdfstring{$d\acal(\torus)$}{dA(SO(2))}}\label{sec:cyclicmodel}

In this section we recall the algebraic category $\algmodel$ as developed by 
the second author in \cite{gre99}. 
This category is naturally enriched in graded abelian groups. 
We use the notation $\diffalgmodel$ for the category of objects in 
$\algmodel$ with a differential
and call it the algebraic model for rational $\torus$--spectra.
A non-monoidal model structure for the category
$\diffalgmodel$ is given in \cite{gre99}.
Work of the first author \cite{barnesmonoidal} builds upon this and constructs a
monoidal model structure on $\diffalgmodel$.

In this paper, we call $\acal(\torus)$ the \textbf{abelian model} for rational $\torus$-spectra
and $\diffalgmodel$ the \textbf{algebraic model} for rational $\torus$-spectra.
The model structures we construct on $\diffalgmodel$ are such that
$\ho (\diffalgmodel)$ is equivalent to the derived category of the abelian model, $D(\algmodel)$, 
which is equivalent to the homotopy category of rational $\torus$-spectra by \cite{gre99}.

\subsection{The abelian model \texorpdfstring{$\acal(\torus)$}{A(T)}}\label{subsec:the model}
The abelian model for rational $\torus$--spectra
is established in \cite{gre99}.
We introduce this category, explain how to
turn it into a differential graded category and then define the
injective model structure.

\begin{definition}
Let $\fcal$ be the set of finite subgroups of $\torus$.
Let $\ocal_\fcal$ be the graded ring of operations $\prod_{n \geqslant 1} \qq [c_n]$
with $c_n$ of degree $-2$. Let $e_n$ be the idempotent arising from
projection onto factor $n$.
In general, let $\phi$ be a subset of $\fcal$ and
define $e_\phi$ to be the idempotent
coming from projection onto the factors in $\phi$.
We let $c$ be the unique element of $\ocal_\fcal$ such that $c_n = e_n c$ for all
$n \geq 1$.
\end{definition}

We use the notation 
$\ecal^{-1} \ocal_\fcal = \colim_{n \geqslant 1}  \ocal_\fcal[c_1^{-1}, \dots, c_n^{-1}]$. 
It is easy to see that $\ecal^{-1} \ocal_\fcal$ is a ring. 
The notation arises since this ring can also be described in terms of inverting 
a certain set of Euler classes.
As a vector space, $(\ecal^{-1} \ocal_\fcal)_{2k}$ is
$\prod_{n \geqslant 1} \qq $
for $k \leqslant 0$ and is $\oplus_{n \geqslant 1} \qq$ for
$n >0$.

For any $\ocal_\fcal$ module $N$, we define $\ecal^{-1} N$ to be
$\ecal^{-1} \ocal_\fcal \otimes_{\ocal_\fcal} N$.

\begin{definition}
We define the \textbf{abelian model}  $\acal=\acal(\torus)$ as follows.
Its class of objects is the
collection of triples $(N, U, \beta)$ where $N$ is an $\ocal_\fcal$--module, $U$ is a graded rational vector space and
\[
\beta \co N \lra \ecal^{-1} \ocal_\fcal \otimes U
\]
is an $\ocal_\fcal$--module map such that $\ecal^{-1} \beta$ is an isomorphism.\footnote{The tensor product in the target of $\beta$ is over $\qq$, thus we omit it from the notation.}

A map $(\theta, \phi)$ in $\acal$ is a commutative square
$$
\xymatrix@C+1cm{
N \ar[r]^(0.4)\beta \ar[d]_\theta &
\ecal^{-1} \ocal_\fcal \otimes U \ar[d]^{\id \otimes \phi} \\
N' \ar[r]^(0.4){\beta'} &
\ecal^{-1} \ocal_\fcal \otimes U'
}$$
where $\theta$ is a map of $\ocal_\fcal$--modules and
$\phi$ is a map of graded rational vector spaces.
\end{definition}

The relation between this category and rational $\torus$--equivariant
spectra is given by the following pair of theorems from \cite{gre99}.

\begin{theorem}
The homotopy category of rational $\torus$--equivariant spectra
is equivalent to the derived category of $\acal$.
\end{theorem}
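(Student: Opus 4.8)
The plan is to construct a homotopy functor $\pi_*^{\acal}\co\ho(\mbox{rational $\torus$-spectra})\lra\acal$ and to prove that it induces an equivalence onto $D(\acal)$; this is the route taken in \cite{gre99}. The functor is read off the isotropy-separation (Tate) square displayed above: to a rational $\torus$-spectrum $X$ it assigns the triple $(N,U,\beta)$ in which the nub $N$ records the homotopy of the $\mathcal{F}$-part $\efp\sm X$ --- an $\ocal_\fcal$-module, with $\qq[c_n]$ acting through the Euler class of the natural character of $\torus/C_n$ --- the vertex $U=\pi_*(\Phi^{\torus}X)=\pi_*^{\torus}(\etf\sm X)$ is the graded $\qq$-vector space of geometric fixed points, and $\beta\co N\lra\ecal^{-1}\ocal_\fcal\otimes U$ is the comparison map, which becomes an isomorphism after inverting Euler classes exactly because the vertical maps of the Tate square constitute an $\ecal^{-1}$-localization. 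The first substantive step is a Whitehead-type theorem: a map of rational $\torus$-spectra inducing an isomorphism on $\pi_*^{\acal}$ is a weak equivalence. This follows from the Tate square --- which recovers $X$ from its $\mathcal{F}$-part, its geometric part and the gluing --- once one knows that the relevant cells (finite $\torus$-spectra built from the cofibre sequences relating $\efp$, $S^0$ and $\etf$) generate and have their homotopy faithfully recorded by $\pi_*^{\acal}$.

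Next I would set up the algebraic Adams spectral sequence, for which one needs two realization statements. First, $\acal$ has enough injectives --- they are assembled from a short list of simple injective pieces indexed over $\fcal$ --- and each injective object $J$ is realized by a rational $\torus$-spectrum $I_J$ with $\pi_*^{\acal}(I_J)\cong J$ and $[X,I_J]_*^{\torus}\cong\hom_{\acal}(\pi_*^{\acal}X,J)_*$, naturally in $X$. Second, an injective resolution of $\pi_*^{\acal}Y$ in $\acal$ may be realized by a tower of $\torus$-spectra under $Y$. Mapping $X$ into this tower produces a spectral sequence
\[
E_2^{s,t}=\ext_{\acal}^{s,t}(\pi_*^{\acal}X,\pi_*^{\acal}Y)\Longrightarrow[X,Y]_{t-s}^{\torus}.
\]

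The step that is genuinely particular to the circle is the computation that $\acal$ has injective dimension $1$: the ring $\ocal_\fcal=\prod_{n\geq1}\qq[c_n]$ is a product of principal ideal domains, hence hereditary, and one checks --- the localization condition on $\beta$ being essential here --- that the category of triples inherits the bound. Consequently $E_2^{s,t}=0$ for $s\geq2$, the injective resolution of $\pi_*^{\acal}Y$ has length $1$, the corresponding tower of $\torus$-spectra is finite, and the two-column spectral sequence collapses and converges without a $\lim^1$ obstruction. One thus obtains a natural short exact sequence
\[
0\lra\ext_{\acal}^1(\pi_*^{\acal}X,\pi_*^{\acal}Y)_{n+1}\lra[X,Y]_n^{\torus}\lra\hom_{\acal}(\pi_*^{\acal}X,\pi_*^{\acal}Y)_n\lra0.
\]
Because $\acal$ is hereditary, $D(\acal)$ carries the matching universal-coefficient sequence: every object splits as a sum of shifts of its homology, and for $M,M'\in\acal$ the graded group $\hom_{D(\acal)}(M,M')$ sits in the identical sequence built from $\hom_{\acal}$ and $\ext^1_{\acal}$. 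Viewing $\pi_*^{\acal}(X)$ as an object of $D(\acal)$ with zero differential, the resulting functor $\ho(\mbox{rational $\torus$-spectra})\to D(\acal)$ respects these two filtrations and is an isomorphism on associated graded, hence fully faithful; it is essentially surjective because the cell construction realizes every object of $\acal$, and therefore --- $\acal$ being hereditary --- every object of $D(\acal)$. That yields the asserted equivalence.

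I expect the main obstacle to be making the Adams spectral sequence rigorous --- realizing algebraic injective resolutions as towers of $\torus$-spectra and pinning down convergence --- together with the injective-dimension-$1$ computation, which rests on the explicit description of the injectives of $\acal$. I should also emphasise that the statement claims only an equivalence of categories; refining it to an equivalence of \emph{triangulated} categories is a more delicate matter, and is exactly where \cite{gre99} had a gap, subsequently addressed (see \cite{shi02, Patchkoria12} for related analysis), and in particular by the monoidal model-categorical comparison that is the goal of the present paper. Granting that comparison, the statement is immediate on passing to homotopy categories and using $\ho(\diffalgmodel)\simeq D(\algmodel)$ --- but within this paper that deduction would be circular, since $D(\algmodel)\simeq\ho(\mbox{rational $\torus$-spectra})$ is used as an input.
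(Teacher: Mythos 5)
The first thing to say is that the paper offers no proof of this statement at all: it is quoted verbatim as a theorem of \cite{gre99}, and the whole point of the present paper is to establish the stronger (monoidal, model-categorical) result by an independent route. So your proposal is really a reconstruction of the argument of the cited memoir, and as such it identifies the right ingredients: the functor $\pi_*^\acal$ read off the Tate square, the Adams short exact sequence (which the paper records as Theorem \ref{thm:torusadams}), the injective dimension~$1$ computation, and the matching universal-coefficient sequence in $D(\acal)$. One remark on your closing paragraph: deducing the statement from the paper's main theorem would \emph{not} be circular. The Quillen-equivalence chain uses \cite{gre99} only for computational and structural inputs (the splitting of $\torus/C_{n+}$ into the $\sigma_m$, the homotopy of $DE\fcal_+\smashprod\sigma_n$, the torsion functor $\Gamma_h$, the injective model structure), never for the homotopy-category equivalence itself; indeed the introduction explains that the intended logic is to prove the Quillen equivalence first and obtain the equivalence of homotopy categories as a consequence.

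Two steps of your sketch have genuine gaps. First, ``$\ocal_\fcal$ is a product of PIDs, hence hereditary'' is not a valid inference: an infinite product of hereditary rings need not be hereditary (already $\prod_{\nn}\zz$, or $\prod_{\nn}k$ for a field $k$, fails to have global dimension $\leq 1$). The injective dimension~$1$ statement for $\acal$ is a real theorem of \cite{gre99}, proved by explicitly classifying the injectives of $\acal$ (torsion injectives concentrated at single subgroups together with the objects $\ecal^{-1}\ocal_\fcal\otimes U$) and exhibiting length-one resolutions; your argument needs to be replaced by that classification. Second, and more seriously, the passage from the two short exact sequences to full faithfulness does not work as written: the functor $X\mapsto\pi_*^\acal(X)$, viewed as landing in $D(\acal)$ via complexes with zero differential, sends any map inducing zero on $\pi_*^\acal$ --- i.e.\ any element of the $\ext^1$ subgroup of $[X,Y]^\torus_*$ --- to zero, so it is not ``an isomorphism on associated graded'' for free; it is a priori not even faithful. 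The actual argument in \cite{gre99} needs the parity observation alluded to in the introduction (the category is ``evenly graded'', so $\hom$ and $\ext^1$ land in different parities and the sequences split canonically), or else a more refined construction of the comparison functor. This is exactly the delicate point where the triangulated refinement of \cite{gre99} had its gap, later analysed in \cite{shi02} and \cite{Patchkoria12}, so it cannot be waved through.
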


For a rational $\torus$--equivariant spectrum $X$, let $\pi^\acal_*(X)$
be the following object of $\acal$.
For details of the spectra $DE \fcal_+$ and $\widetilde{E} \fcal$
see Definition \ref{def:universalspaces}. The spectrum $\Phi^{\torus} X$
is the geometric $\torus$--fixed points of $X$.
\[
\pi^\acal_*(X) = \big(
\pi_*^{\torus} (X \smashprod DE \fcal_+)
\longrightarrow
\pi_*^{\torus} (X \smashprod DE \fcal_+ \smashprod \widetilde{E} \fcal)
\cong \ecal^{-1} \ocal_\fcal \otimes \pi_*(\Phi^{\torus} X) \big)
\]
There is also an Adams short exact sequence which explains
how to calculate maps in the homotopy category of
rational $\torus$--equivariant spectra.

\begin{theorem}\label{thm:torusadams}
Let $X$ and $Y$ be rational $\torus$--equivariant spectra. Then
the sequence below is exact.
\[
0 \lra
\ext_\acal (\pi_*^\acal(\Sigma X),\pi_*^\acal( Y))
\lra
[X,Y]^{\torus}_* 
\lra
\hom_\acal (\pi_*^\acal(X),\pi_*^\acal(Y))
\lra 0
\]
\end{theorem}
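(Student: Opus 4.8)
The plan is to use that $\acal$ has injective dimension $1$, established in \cite{gre99}: thus $\ext^i_\acal(-,-)=0$ for $i\geqslant 2$ and every object of $\acal$ has an injective resolution of length $1$. Treating $\pi^\acal_*$ as a homology theory on rational $\torus$--spectra with values in $\acal$, I would convert a length--one injective resolution of $\pi^\acal_*(Y)$ into a single fibre sequence of $\torus$--spectra (a two--stage Adams resolution of $Y$) and then read the exact sequence off the associated long exact sequence of graded homotopy classes. Because the resolution has length $1$ there is only one fibre sequence and no Adams tower, so no convergence or $\lim^1$ problem arises. Alternatively, one can transport everything along the equivalence $\ho(\mbox{$\torus$--spectra})\simeq D(\acal)$ of the preceding theorem, under which $\pi^\acal_*$ becomes the homology of a complex in $\acal$, and prove a universal coefficient short exact sequence in $D(\acal)$; this is the same computation, so I describe the spectrum--level version.

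Concretely, fix $Y$ and an injective resolution $0\to\pi^\acal_*(Y)\to I^0\to I^1\to 0$ in $\acal$. Two inputs are needed. First, \emph{realization}: one must produce $\torus$--spectra $\widetilde I^0,\widetilde I^1$ with $\pi^\acal_*(\widetilde I^j)\cong I^j$ together with a fibre sequence $Y\to\widetilde I^0\to\widetilde I^1$ inducing the given resolution on $\pi^\acal_*$; for this one uses the explicit description in \cite{gre99} of the injective objects of $\acal$ --- they are assembled from the standard pieces $\ecal^{-1}\ocal_\fcal\otimes U$ and torsion $\ocal_\fcal$--modules, all of which are visibly realizable --- together with the equivalence of homotopy categories to lift the resolution to a distinguished triangle of spectra. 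Second, \emph{representability}: for an injective object $I$ of $\acal$ with realization $\widetilde I$, the natural comparison map $[X,\widetilde I]^\torus_*\to\hom_\acal(\pi^\acal_*(X),I)$ should be an isomorphism for every $\torus$--spectrum $X$. This can be checked by cellular induction on $X$: it holds when $X$ runs over the orbit cells $\torus/H_+$ by a direct computation with homotopy groups, both sides are cohomological functors of $X$, and the right--hand side carries wedges to products (since $\pi^\acal_*$ carries wedges to coproducts and $I$ is injective), so the class of $X$ on which the comparison map is an isomorphism is closed under the operations used to build all $\torus$--spectra. It is here that injectivity of $I$ is essential, and the reason we resolve by injectives rather than projectives, since $\acal$ has too few projectives.

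Granting these, apply $[X,-]^\torus_*$ to the fibre sequence $Y\to\widetilde I^0\to\widetilde I^1$ (extended to the left) to obtain the exact sequence
\[
[\Sigma X,\widetilde I^0]^\torus_*\lra[\Sigma X,\widetilde I^1]^\torus_*\lra[X,Y]^\torus_*\lra[X,\widetilde I^0]^\torus_*\lra[X,\widetilde I^1]^\torus_*.
\]
By the representability isomorphism, applied to $X$ and to $\Sigma X$, this is identified with the sequence obtained by applying $\hom_\acal(\pi^\acal_*(X),-)$ and $\hom_\acal(\pi^\acal_*(\Sigma X),-)$ to $I^0\to I^1$. Left exactness of $\hom_\acal$ on $0\to\pi^\acal_*(Y)\to I^0\to I^1$ identifies the kernel of $[X,\widetilde I^0]^\torus_*\to[X,\widetilde I^1]^\torus_*$ with $\hom_\acal(\pi^\acal_*(X),\pi^\acal_*(Y))$, and since $\ext^{\geqslant 2}_\acal$ vanishes the cokernel of $[\Sigma X,\widetilde I^0]^\torus_*\to[\Sigma X,\widetilde I^1]^\torus_*$ is $\ext_\acal(\pi^\acal_*(\Sigma X),\pi^\acal_*(Y))$. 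Splicing these two identifications into the displayed five--term sequence gives exactly the asserted short exact sequence.

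The main obstacle is the pair of inputs in the second paragraph: realizing the algebraically defined injective resolution by an honest fibre sequence of $\torus$--spectra, and proving that realizations of injective objects represent $\hom_\acal(\pi^\acal_*(-),I)$. Both require genuine structural information about $\acal$ and its injective objects from \cite{gre99} and are precisely where one crosses between topology and algebra; once the triangle of spectra and the representability statement are in hand, the remainder --- including naturality in $X$ and the fact that the displayed sequence comes from a genuine cofibre sequence of mapping spectra --- is formal.
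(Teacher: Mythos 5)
The paper does not prove this statement: it is quoted directly from \cite{gre99} as background (it is one of ``the following pair of theorems from \cite{gre99}''), so there is no in-paper proof to compare against. Your argument is, however, essentially the proof given in that reference: injective dimension $1$ of $\acal$, realizability of the injective objects as $\torus$--spectra, the representability isomorphism $[X,\widetilde I]^\torus_*\cong\hom_\acal(\pi^\acal_*(X),I)$ for injective $I$ (proved by a cell-by-cell check using that $\hom_\acal(-,I)$ is exact and sends sums to products), and then the five-term exact sequence of the one-stage Adams resolution. One small caution: you offer, as an alternative way to lift the resolution to a triangle of spectra, the equivalence $\ho(\mbox{$\torus$--spectra})\simeq D(\acal)$; in \cite{gre99} that equivalence is established \emph{using} the Adams short exact sequence, so leaning on it would be circular at the source. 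This is harmless because your primary route already avoids it --- the representability isomorphism applied to $Y$ itself produces the map $Y\to\widetilde I^0$ realizing $\pi^\acal_*(Y)\hookrightarrow I^0$, and the long exact sequence of the cofibre then identifies $\pi^\acal_*$ of the cofibre with $I^1$ --- but the alternative should be dropped rather than presented as an equally valid option.
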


In \cite{gre99} a model structure is given for the category of objects in
$\acal$ that have a differential. We define what it means to
have a differential and then introduce the model structure. We will leave the proof that
$\acal$ has all small limits and colimits to the next subsection (see also \cite{gre99}).

If we think of $\ocal_\fcal$ as an object of $\ch(\qq)$ with trivial
differential, then we can consider the category of
$\ocal_\fcal$--modules in $\ch(\qq)$. Such an object $N$ is an
$\ocal_\fcal$--module in graded vector spaces along with maps
$d_n \co N_n \lra N_{n-1}$. These maps
satisfy the relations below.
\[
d_{n-1} \circ d_n =0
\quad
c d_n = d_{n-2} c
\]

\begin{definition}
We define the category  $d \acal= d \acal(\torus)$ as follows.
Its class of objects is the
collection of triples $(N, U, \beta)$ where $N$ is a rational chain complex with an action of $\ocal_\fcal$, $U$ is a rational chain complex and
\[
\beta \co N \lra \ecal^{-1} \ocal_\fcal \otimes U
\]
 is a $\ocal_\fcal$--module map in $\ch(\qq)$ such that $\ecal^{-1} \beta$ is an isomorphism.

A map $(\theta, \phi)$ in $d \acal$ is then a commutative
square as for $\acal$, such that
$\theta$ is a map in the category of
$\ocal_\fcal$--modules in $\ch(\qq)$
and $\phi$ is a map of $\ch(\qq)$.

We call this category the \textbf{algebraic
model for rational $\torus$--spectra}.
\end{definition}

For $A$ and $B$ in $d \acal$, we define
$\acal(A, B)_*$ to be the graded set of maps
from the underlying object of $A$ in $\acal$
to the underlying object of $B$ in $\acal$.
We equip this graded $\qq$--module with the
differential induced by the convention
$df_n = d_B f_n + (-1)^{n+1} f_n d_A$.
By considering an object of $\acal$ as an object of
$d \acal$ with no differential, we can restrict the definition
of $\acal(A,B)_*$ to allow for the case where $A$
is in $\acal$.

The following result is the subject of \cite[Appendix B]{gre99}.

\begin{proposition}
The category $d \acal$ has a model structure where the
class of weak equivalences is exactly the class of quasi--isomorphisms.
The class of cofibrations is the class of monomorphisms.
This is called the \textbf{injective model structure}.
We write
$d \acal_{i}$
to denote this model structure.
\end{proposition}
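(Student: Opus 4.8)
The plan is to realise $d\acal$ as the category of differential (graded) objects in a Grothendieck abelian category, and then to invoke the standard construction of the injective model structure on such a category.

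First I would record that $\acal$ is a Grothendieck abelian category. That it is abelian follows from the description of kernels and cokernels in \cite{gre99}: for a map $(\theta,\phi)$ one takes the kernel and cokernel of $\theta$ in $\ocal_\fcal$--modules and of $\phi$ in graded $\qq$--vector spaces and induces the structure map, and the requirement that $\ecal^{-1}$ of this induced map be an isomorphism holds because $\ecal^{-1}(-)=\ecal^{-1}\ocal_\fcal\otimes_{\ocal_\fcal}(-)$ and $\ecal^{-1}\ocal_\fcal\otimes_{\qq}(-)$ are exact; the same exactness identifies coimages with images. Small limits and colimits exist --- this is the subject of the next subsection --- and are computed coordinate--wise in $N$ and $U$, using that $\ecal^{-1}$ commutes with colimits; since filtered colimits are exact in $\ocal_\fcal$--modules and in $\qq$--vector spaces and are preserved by $\ecal^{-1}$, $\acal$ satisfies (AB5). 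Together with the existence of a set of generators for $\acal$ (see \cite{gre99, barnesmonoidal}) this makes $\acal$ Grothendieck, hence locally presentable by Gabriel--Popescu. Consequently the category $d\acal$ of objects of $\acal$ equipped with a degree $-1$ differential is again a locally presentable abelian category, and quasi--isomorphisms are its weak equivalences in the homological sense: $H_*$ of an object of $d\acal$ again lies in $\acal$ because $\ecal^{-1}\ocal_\fcal\otimes_\qq(-)$ is exact.

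Next I would construct the injective model structure on $d\acal$ via the general machinery for (differential graded) objects of a Grothendieck abelian category (Beke, Hovey, Gillespie). Concretely: fix a regular cardinal $\kappa$ bounding a set of generators of $\acal$; take for generating cofibrations a set $I$ of monomorphisms between $\kappa$--presentable objects of $d\acal$ chosen so that the $I$--injective maps are exactly the epimorphisms that are quasi--isomorphisms with degreewise injective kernel; run the small object argument (legitimate since $d\acal$ is locally presentable) to obtain the (cofibration, trivial fibration) factorisation and to see that the $I$--cofibrations are precisely the monomorphisms; and produce a set $J$ of trivial cofibrations whose injective objects are the intended fibrations, yielding the (trivial cofibration, fibration) factorisation. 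The two--out--of--three property and retract closure of quasi--isomorphisms are immediate, and the lifting axioms then follow formally from the small object argument once one knows that the $J$--cofibrations are exactly the monomorphisms that are quasi--isomorphisms; the homological input for that is that such a monomorphism has acyclic cokernel and that acyclic objects of $d\acal$ admit injective resolutions along which they become contractible.

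The step I expect to be the main obstacle is producing $J$: exhibiting the trivial cofibrations --- the monomorphisms which are quasi--isomorphisms --- as generated by a \emph{set} rather than a proper class. This is the familiar difficulty in building injective model structures, and it is handled by the bounding argument available in any Grothendieck abelian category: every acyclic object of $d\acal$ is the filtered colimit of its acyclic subobjects of presentation rank at most $\kappa$, so one may take $J$ to consist of the monomorphisms with $\kappa$--presentable acyclic cokernel, together with enough maps to force fibrant objects to be degreewise injective. A simplification special to the present case, not needed for the general argument, is that $\acal$ has injective dimension $1$, so the resolutions above may be taken of length one and the fibrations and factorisations admit an explicit description; this is the route of \cite[Appendix B]{gre99}, to which I would defer for the remaining routine verifications.
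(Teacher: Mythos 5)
Your outline is essentially correct, but it takes a genuinely different route from the paper: the paper offers no argument at all for this proposition and simply defers to \cite[Appendix B]{gre99}, where the model structure is built by hand using the special features of $\acal$ --- injective dimension $1$ and an explicit description of the injective objects --- to write down the fibrations and factorisations directly. You instead propose the general Grothendieck/combinatorial machinery (Beke--Hovey--Gillespie), which buys generality and a cleaner conceptual justification of the solution-set issues, at the cost of two points that need more care than you give them. First, $d\acal$ is \emph{not} the category of chain complexes in $\acal$: objects of $\acal$ are already internally graded (with $c$ of degree $-2$), and the differential of an object of $d\acal$ lowers that internal grading, so the off-the-shelf theorems for $\ch(\mathcal{B})$ with $\mathcal{B}$ Grothendieck do not apply verbatim; you need the versions for differential graded modules over a graded ring object, or a direct appeal to Smith's recognition theorem for locally presentable abelian categories with an accessible class of quasi-isomorphisms. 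Second, the claim that $\acal$ is Grothendieck (in particular that it admits a generator) is doing real work and should be pinned down rather than waved at \cite{gre99, barnesmonoidal}; the colimit and AB5 verifications you sketch are fine, since colimits are computed coordinatewise and $\ecal^{-1}\ocal_\fcal$ is flat over $\ocal_\fcal$. Your closing observation --- that injective dimension $1$ makes acyclic complexes of injectives contractible and hence makes the fibrations explicit --- is exactly the simplification that \cite[Appendix B]{gre99} exploits from the outset, so the two arguments converge there.
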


As we shall see shortly, the category $\acal$ has a monoidal product
which induces a monoidal product on $d \acal$.
But the injective model structure does not make
$d \acal$ into a monoidal model category.
This failure occurs because of $c$--torsion, just as
the injective model structure on $\ch(\zz)$ is not monoidal due to torsion.

This is a serious defect, as we are unable to effectively compare
this monoidal product to the smash product of $\torus$--spectra.
This defect is further complicated by the lack of projective objects of $\acal$.
There is however a cofibrantly generated monoidal model structure
on $d \acal$ which is Quillen equivalent to the
injective model structure. It is constructed in \cite{barnesmonoidal} and we recall it in the next section.

\subsection{The monoidal model structure}\label{subsec:adjunctions} \label{subsec:dualizable}

This subsection has three aims, namely to prove that
$\acal$ and $d \acal$ have all small limits and colimits (see also \cite{gre99})
define the monoidal product and recall the dualizable model structure on $d\acal$ (see \cite{barnesmonoidal}) which is monoidal.
To do so, we will need to relate $\acal$
to a larger category which we introduce below.

We let $\hat{\acal}$ be category of triples $(N, U,
\alpha \co N \lra \ecal^{-1} \ocal_\fcal \otimes U)$
where $N$ is an $\ocal_\fcal$--module,
$U$ is graded $\qq$--module and  the map $\alpha$
is a map of $\ocal_\fcal$--modules.
A map of such diagrams is then a commutative diagram
as below where $\theta$ is a map of $\ocal_\fcal$--modules,
and $\phi$ is a map of graded $\qq$--modules.
\[
\xymatrix{
N \ar[r] \ar[d]^\theta &
\ecal^{-1} \ocal_\fcal \otimes U
\ar[d]^{\id \otimes \phi} \\
N' \ar[r] &
\ecal^{-1} \ocal_\fcal \otimes U'
}
\]
Thus $\hat{\acal}$ is $\acal$ without the restriction
that the structure map of an object should be isomorphism after $\ecal$ is inverted.
There is an adjunction
\[
\xymatrix{
j^* : \acal \ar@<+1ex>[r] &
\hat{\acal} : \Gamma_h \ar@<+0.5ex>[l]}
\]
where $j^*$ is the inclusion. The functor $j^*$ is full and faithful.
The explicit construction of the right adjoint $\Gamma_h$
is quite intricate and therefore we leave the
details to \cite[Section 20.2]{gre99}.

Our first use of the torsion functor $\Gamma_h$ is to define limits in $\acal$. It is
clear that the adjunction
$(j, \Gamma_h)$ passes to categories with differentials,
as does the following definition.
\begin{definition}
Let $I$ be some small category and let
$\{ N_i \lra \ecal^{-1} \ocal_\fcal \otimes U_i \}$
be the objects of some $I$--shaped diagram in $\acal$.
The colimit over $I$ is
\[
\colim_i N_i \lra \ecal^{-1} \ocal_\fcal \otimes (\colim_i U_i) .
\]
The limit is formed by first applying the functor $j^*$, taking limits in
$\hat{\acal}$ and then applying $\Gamma_h$.
In more detail, we construct the following pullback square.
\[
\xymatrix{
M \ar[r] \ar[d]_f &
\lim(N_i) \ar[d] \\
\ecal^{-1} \ocal_\fcal \otimes \lim(U_i) \ar[r] &
\lim(\ecal^{-1} \ocal_\fcal \otimes U_i)
}
\]
The limit of the $I$--shaped diagram
$\{ N_i \lra \ecal^{-1} \ocal_\fcal \otimes U_i \}$
is $\Gamma_h f$.
\end{definition}

Now we turn to the monoidal product of $\acal$ and $d \acal$.

\begin{definition}
For
$\beta \co N \lra \ecal^{-1} \ocal_\fcal \otimes U$ and
$\beta' \co N' \lra \ecal^{-1} \ocal_\fcal \otimes U'$ in $d \acal$, their
\textbf{tensor product} is
\[
\beta \otimes \beta' \co
N \otimes_{\ocal_\fcal} N'
\lra
(\ecal^{-1} \ocal_\fcal \otimes U)
\otimes_{\ocal_\fcal}
(\ecal^{-1} \ocal_\fcal \otimes U')
\cong
\ecal^{-1} \ocal_\fcal \otimes (U \otimes_\qq U')
\]
The unit of this monoidal product is the object
$S^0 = (i \co \ocal_\fcal \lra \ecal^{-1} \ocal_\fcal \otimes \qq$).
\end{definition}

This monoidal product is related to the smash product of spectra 
as we can see from the short exact sequence of \cite{gre99}
\[
0
\lra
\pi_*^\acal(X) \otimes \pi_*^\acal(Y)
\lra \pi_*^\acal(X \smashprod Y)
\lra
\Sigma \text{Tor} (\pi_*^\acal(X),\pi_*^\acal(Y))
\lra 0
\]

This monoidal structure is closed, that is, there is an internal
function object describing the $d \acal$--object of maps
between two objects. This functor is more complicated than the tensor
product and requires use of the torsion functor $\Gamma_h$.

\begin{definition}\label{def:function}
Consider two elements of $d \acal$,
\[
A= (\beta \co N \lra \ecal^{-1} \ocal_\fcal \otimes U)
\quad  {and} \quad
B= (\beta' \co N' \lra \ecal^{-1} \ocal_\fcal \otimes U')
\]
The
\textbf{function object} $F(A,B)$ is the map
$\Gamma_h \delta$, where $\delta$ is defined by the pullback square below.
\[
\xymatrix{
Q
\ar[dd]
\ar[r]^(0.4)\delta &
\ecal^{-1} \ocal_\fcal \otimes \hom_{\qq} (U,U')
\ar[d] \\
&
\hom_{\ocal_\fcal} (
\ecal^{-1} \ocal_\fcal \otimes U,
\ecal^{-1} \ocal_\fcal \otimes  U')
\ar[d] \\
\hom_{\ocal_\fcal} (N,N')
\ar[r] &
\hom_{\ocal_\fcal} (N,
\ecal^{-1} \ocal_\fcal \otimes  U')
}
\]
\end{definition}

The monoidal product and function object are related by a natural isomorphism.
Let $A$, $B$ and $C$ denote objects of $d \acal$,  then
\[
d \acal(A \otimes B, C) \cong d \acal(A, F(B,C))
\]

\begin{definition}\label{def:modulefunctors}
For $K \in \ch(\qq)$ we define $LK \in d \acal$ as
\[
LK= (i \otimes \id_K \co
\ocal_\fcal \otimes K \lra
\ecal^{-1} \ocal_\fcal \otimes K)
\]

For $A$ and $B$ in $d \acal$, we define
$\acal(A, B)_*$ to be the graded set of maps
of $\acal$ (ignoring the differential). We then equip this
graded $\qq$--module with the differential induced by the convention
$df_n = d_B f_n + (-1)^{n+1} f_n d_A$.
This construction gives a functor
\[
R \co d \acal \lra \ch(\qq) \quad
RA := \acal(S^0, A)_*
\]
\end{definition}

The functors $L$ and $R$ form an adjoint pair between
$\ch(\qq)$ and $d \acal$. Furthermore they
give $d \acal$
the structure of a closed
$\ch(\qq)$--module in the sense of \cite[Section 4.1]{hov99}.

This module structure and the closed monoidal product interact to give
$d \acal$  a tensor product, a cotensor product and
an enrichment over $\ch(\qq)$.
Let $K \in \ch(\qq)$
and $A = (\beta \co N \lra \ecal^{-1} \ocal_\fcal \otimes U)$ in
$d \acal$.
Their \textbf{tensor product} $A \otimes K$ is defined to be  $A \otimes LK$.
Thus $A \otimes K$ is given by
\[
\beta \otimes \id_K  \co N \otimes_\qq K \lra
\ecal^{-1} \ocal_\fcal \otimes (U \otimes_\qq K)
\]
There is a \textbf{cotensor product} $A^K$ defined to be
$F(LK, A)$.
The \textbf{enrichment} is given by  $R F(A,B)$
for $A$ and $B$ in $d \acal$.
This enrichment, tensor and cotensor are related by the natural
isomorphisms below.
\[
d \acal(A, B^K)
\cong
d \acal(A \otimes K, B)
=
d \acal(A \otimes LK, B)
\cong
\ch(\qq)(K, R F(A,B))
\]

Now we are ready to recall the monoidal model structure on
$d \acal$ from \cite{barnesmonoidal} and compare it to several other model categories, in particular the injective model structure on $d \acal$ introduced in \cite{gre99}.
This monoidal model structure is defined in
terms of the (strongly) dualizable objects of $d \acal$.

\begin{definition}
An object $A \in \acal$ is said to be (strongly) \textbf{dualizable}
if for any $B \in \acal$ the canonical map
\[
F(A,S^0) \otimes B \longrightarrow F(A,B)
\]
is an isomorphism.
The \textbf{functional dual} of an object $B$ is the object $DB = F(B,S^0)$.
\end{definition}

Let $\pcal$ denote a set of representatives for the
isomorphisms classes of dualizable objects in $\acal$.
Such a set exists by \cite[Corollary 5.8]{barnesmonoidal}.
The following theorem summarizes \cite[Section 6]{barnesmonoidal}.

\begin{theorem}\label{thm:modelstructure}
There is a cofibrantly generated model structure
on $d \acal$ with weak equivalences the
class generated by the homology isomorphisms. The generating cofibrations
have the form
\[
S^{n-1} \otimes P \lra D^n \otimes P
\]
for $P \in \pcal$ and $n \in \zz$, where $S^n$ is the chain complex consisting of one copy of $\mathbb{Q}$ in degree $n$ and $0$ elsewhere and $D^n$ consists of two copies of $\mathbb{Q}$ in degrees $n$ and $n-1$ with the identity as the only non-trivial differential.

We call this model structure the \textbf{dualizable model structure}
and denote it $d \acal_{dual}$.
The dualizable model structure is proper,
symmetric monoidal and satisfies the monoid axiom.
\end{theorem}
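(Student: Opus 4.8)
The plan is to realise $d\acal_{dual}$ as the model structure on $d\acal$ cofibrantly generated by $I=\{\,S^{n-1}\otimes P\to D^n\otimes P : P\in\pcal,\ n\in\zz\,\}$ and $J=\{\,0\to D^n\otimes P : P\in\pcal,\ n\in\zz\,\}$, using the recognition theorem for cofibrantly generated model categories. Because $d\acal$ is a closed $\ch(\qq)$--module with $\acal(P,A)_*\cong RF(P,A)$, the sets $I$ and $J$ are the images of the generating cofibrations $S^{n-1}\to D^n$ and the generating trivial cofibrations $0\to D^n$ of the projective model structure $\ch(\qq)_{proj}$ under the left adjoints $P\otimes L(-)\co\ch(\qq)\to d\acal$; hence a map $f$ is $J$--injective exactly when each $\acal(P,f)_*$ is a degreewise epimorphism, and $I$--injective exactly when each $\acal(P,f)_*$ is a surjective quasi-isomorphism. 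To apply the recognition theorem one checks: the domains of $I$ and $J$ are small relative to the respective cell complexes; each map of $J$ is a trivial cofibration, since $D^n\otimes P\cong P\otimes LD^n$ is contractible — $0\to D^n$ is a relative $\{S^{k-1}\to D^k\}$--cell complex, and the contraction of $D^n$ is transported by the additive enriched functor $P\otimes L(-)$ — so that $J$--cell consists of homology isomorphisms; and, the crucial point, that an $I$--injective map is precisely a $J$--injective homology isomorphism. This last equivalence is where the explicit structure of $\acal(\torus)$ from \cite{gre99} enters: one needs that the objects of $\pcal$ are \emph{small} (not automatic, since $\ocal_\fcal$ is an infinite product and $\acal(\torus)$ is not locally presentable) and that the dualizable objects \emph{detect} homology isomorphisms, equivalently generate $D(\acal)$. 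These facts are the substance of \cite[Sections~5--6]{barnesmonoidal}; granting them, the recognition theorem produces $d\acal_{dual}$ with weak equivalences the homology isomorphisms and the stated generators, and the identity functor $d\acal_{dual}\to d\acal_i$ is a left Quillen equivalence.

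For the symmetric monoidal axioms it suffices to verify the pushout--product axiom on generators. Since the module structure is induced by the strong monoidal left adjoint $L$ and $S^{m-1}\otimes_\qq S^{n-1}\cong S^{m+n-2}$ in $\ch(\qq)$, the pushout--product of the generators attached to $P$ and $Q$ is a map of the form $u\otimes(P\otimes Q)$, where $u$ is the pushout--product of $S^{m-1}\to D^m$ and $S^{n-1}\to D^n$ in $\ch(\qq)_{proj}$ and $P\otimes Q$ is again dualizable (the dualizable objects are closed under $\otimes$). As every cofibration of $\ch(\qq)_{proj}$ is a retract of a $\{S^{k-1}\to D^k\}$--cell complex and $(-)\otimes R$ preserves colimits and retracts, $u\otimes R$ is a cofibration of $d\acal_{dual}$ for each dualizable $R$; and if $u$ is in addition a quasi-isomorphism then its cokernel is contractible, hence so is the cokernel of the monomorphism $u\otimes R$, so $u\otimes R$ is then a trivial cofibration. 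This gives the pushout--product axiom, and the unit axiom holds strictly because $\pcal$ may be taken to contain the monoidal unit $S^0$ (which is self-dual) and $0\to S^0$ is a cofibration by the same argument applied to $u=(0\to S^0)$ in $\ch(\qq)_{proj}$.

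The monoid axiom is then immediate: for each generator $j\co 0\to D^n\otimes P$ of $J$ and each $A$ in $d\acal$, the map $j\otimes A$ is the inclusion of $0$ into the contractible object $D^n\otimes P\otimes A$; its cobase changes are coproducts with contractible objects, hence homology isomorphisms, and transfinite composites of such maps remain homology isomorphisms because filtered colimits in $\acal(\torus)$ are exact. For properness, the key observation is that every cofibration of $d\acal_{dual}$ is a monomorphism — the generators are degreewise split monomorphisms on the $N$-- and $U$--parts, and monomorphisms in $\acal(\torus)$ are closed under pushout, transfinite composition and retract — so left properness follows from the five lemma applied to the long exact homology sequences of a pushout along a monomorphism; right properness is obtained from a pullback argument using the description of fibrations as the maps with each $\acal(P,f)_*$ epic, together with the right properness of $\ch(\qq)_{proj}$, or alternatively by presenting $d\acal_{dual}$ as a right Bousfield localization of the right proper model structure $d\acal_i$.

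The main obstacle is the existence statement, and within it precisely the two facts imported from \cite{barnesmonoidal}: the smallness of the dualizable objects and their detection of homology isomorphisms. Neither is formal — $\acal(\torus)$ is not locally presentable and its internal function objects are built from the intricate torsion functor $\Gamma_h$ — and both rest on the classification in \cite{gre99} of which objects of $\acal(\torus)$ are dualizable. Once these are available, the monoidal, monoid and properness statements follow fairly mechanically from $d\acal$ being a closed $\ch(\qq)$--module whose generators arise by tensoring the generators of $\ch(\qq)_{proj}$ with the dualizable objects.
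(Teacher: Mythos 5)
The paper gives no proof of this theorem: it is stated explicitly as a summary of \cite[Section 6]{barnesmonoidal}, so there is nothing internal to compare your argument against. Your outline is a faithful reconstruction of the construction in that reference --- generating (trivial) cofibrations obtained by tensoring those of $\ch(\qq)_{proj}$ with dualizable objects, the recognition theorem, and pushout--product/monoid/properness checks reduced to generators --- and you correctly isolate the two genuinely non-formal inputs (smallness of the dualizable objects and the fact that they detect homology isomorphisms) and defer them to \cite{barnesmonoidal}, exactly as the paper itself does.
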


Since all dualizable cofibrations are in particular monomorphisms we get the following comparison with the injective model structure of \cite{gre99}, which we write
as $d \acal_i$.

\begin{lemma}
The identity functor from $d \acal_{dual}$
to $d \acal_i$ is the left adjoint of
a Quillen equivalence.
\[
\xymatrix{
\id
:
d \acal_{{dual}}
\ar@<+1ex>[r] &
d \acal_{i}
:
\id \ar@<+0.5ex>[l]}
\]
\end{lemma}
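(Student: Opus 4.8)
The plan is to exploit the fact that $d\acal_{dual}$ and $d\acal_i$ are two model structures on the \emph{same} underlying category $d\acal$, so that the adjunction in question is simply the trivial identity--identity adjunction and the entire content lies in comparing the classes of cofibrations and of weak equivalences. The crucial point, which I would isolate at the outset, is that the two model structures have the same weak equivalences: the weak equivalences of $d\acal_{dual}$ are the homology isomorphisms by Theorem~\ref{thm:modelstructure}, and these are precisely the quasi--isomorphisms, which by construction are the weak equivalences of $d\acal_i$.

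Next I would check that $\id \co d\acal_{dual} \lra d\acal_i$ is a left Quillen functor. The generating cofibrations $S^{n-1}\otimes P \lra D^n \otimes P$ of Theorem~\ref{thm:modelstructure} are monomorphisms, since the inclusion $S^{n-1}\hookrightarrow D^n$ of rational chain complexes remains injective after tensoring with $P$. Because the cofibrations of $d\acal_i$ are exactly the monomorphisms of $d\acal$, and the class of cofibrations in any model structure is closed under pushout, transfinite composition and retract, it follows that every cofibration of $d\acal_{dual}$ is a cofibration of $d\acal_i$. Combining this with the equality of weak equivalences noted above, every acyclic cofibration of $d\acal_{dual}$ is an acyclic cofibration of $d\acal_i$, so $(\id,\id)$ is a Quillen pair.

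Finally, for the Quillen equivalence I would invoke the standard criterion: the pair is a Quillen equivalence precisely when, for every cofibrant $A$ in $d\acal_{dual}$ and every fibrant $B$ in $d\acal_i$, a morphism $A\lra B$ is a weak equivalence in $d\acal_i$ if and only if the same morphism is a weak equivalence in $d\acal_{dual}$ (the unit and counit of the identity adjunction being identities, a morphism agrees with its adjunct). This is immediate from the equality of the two classes of weak equivalences. Thus there is essentially no genuine obstacle; all the work has been front-loaded into the construction of the dualizable model structure in \cite{barnesmonoidal}, and the one step I would be careful to record explicitly is the identification of the weak equivalences, on which the whole argument turns.
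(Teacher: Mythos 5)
Your argument is correct and is essentially the paper's own: the lemma is justified there by the one-line observation that dualizable cofibrations are monomorphisms (hence injective cofibrations), combined with the fact that both model structures have the quasi-isomorphisms as weak equivalences, which immediately gives both the Quillen pair and the Quillen equivalence. Your write-up simply makes explicit the closure argument reducing to the generating cofibrations and the standard cofibrant/fibrant criterion, which is exactly what the paper leaves implicit.
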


The object $S^0$ is clearly dualizable. Similarly if $V$ is a finite dimensional
vector space, then $S^0 \otimes V$ is dualizable. As a consequence, we have the following lemma.

\begin{lemma}\label{lem:LRquillen}
There is a strong symmetric monoidal Quillen pair
\[
\xymatrix{
L
:
\ch(\qq)
\ar@<+1ex>[r] &
d \acal_{dual}
:
R \ar@<+0.5ex>[l]}
\]
where $L V = S^0 \otimes V$ and
$R A = \acal(S^0, A)_*$. Thus, $d \acal_{dual}$ is a closed
$\ch(\qq)$--model category.
\end{lemma}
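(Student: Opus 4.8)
The plan is to promote the adjunction $(L,R)$ of Definition~\ref{def:modulefunctors} --- which already exhibits $d \acal$ as a closed $\ch(\qq)$--module --- to the level of model structures. There are only two things to check: that $L$ is strong symmetric monoidal, and that $L$ is a left Quillen functor for the dualizable model structure; the assertion that $d \acal_{dual}$ is a closed $\ch(\qq)$--model category then follows formally from these together with the pushout-product axiom supplied by Theorem~\ref{thm:modelstructure}. Strong symmetric monoidality is immediate: by Definition~\ref{def:modulefunctors}, $L\qq = (\ocal_\fcal \to \ecal^{-1}\ocal_\fcal \otimes \qq)$ is the unit $S^0$ on the nose, and for $K, K' \in \ch(\qq)$ the canonical isomorphism $(\ocal_\fcal \otimes K) \otimes_{\ocal_\fcal} (\ocal_\fcal \otimes K') \cong \ocal_\fcal \otimes (K \otimes_\qq K')$, together with the identity on vertices, identifies $LK \otimes LK'$ with $L(K \otimes_\qq K')$ coherently and symmetrically; $R$ then carries the induced lax symmetric monoidal structure.

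The substantive step, and the one I would spend effort on, is that $(L,R)$ is a Quillen pair. Since $\ch(\qq)$ with its standard model structure is cofibrantly generated, it suffices to show that $L$ sends generating cofibrations to cofibrations and preserves all weak equivalences --- the latter then forces acyclic cofibrations to acyclic cofibrations. Applying $L = S^0 \otimes (-)$ to a generating cofibration $S^{n-1} \hookrightarrow D^n$ of $\ch(\qq)$ produces the map $S^{n-1} \otimes S^0 \to D^n \otimes S^0$; since $S^0$ is dualizable it is isomorphic to some $P \in \pcal$, so this map is (isomorphic to) one of the generating cofibrations of $d \acal_{dual}$ recorded in Theorem~\ref{thm:modelstructure}. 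Being a left adjoint, $L$ preserves colimits and retracts, hence carries every cofibration of $\ch(\qq)$ to a cofibration of $d \acal_{dual}$. That $L$ preserves weak equivalences is clear from $LK = (\ocal_\fcal \otimes_\qq K \to \ecal^{-1}\ocal_\fcal \otimes_\qq K)$: both $\ocal_\fcal$ and $\ecal^{-1}\ocal_\fcal$ are flat over the field $\qq$, so a quasi--isomorphism $K \to K'$ induces quasi--isomorphisms $\ocal_\fcal \otimes K \to \ocal_\fcal \otimes K'$ and $K \to K'$, hence a weak equivalence in $d \acal_{dual}$. Thus $L$ is left Quillen; and since $L$ is strong monoidal with $L\qq = S^0$ and $\qq$ is cofibrant in $\ch(\qq)$, the unit condition for a monoidal Quillen pair holds trivially, so $(L,R)$ is a strong symmetric monoidal Quillen pair.

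Finally I would deduce that $d \acal_{dual}$ is a closed $\ch(\qq)$--model category, i.e.\ that the action $(K,A) \mapsto LK \otimes A$ satisfies the pushout-product axiom and the unit condition. Given a cofibration $i$ in $\ch(\qq)$ and a cofibration $j$ in $d \acal_{dual}$, the pushout-product over this action is $Li \,\square\, j$ computed inside the symmetric monoidal model category $d \acal_{dual}$; $Li$ is a cofibration by the previous paragraph, so Theorem~\ref{thm:modelstructure} makes $Li \,\square\, j$ a cofibration, acyclic whenever $i$ or $j$ is (using also that $L$ preserves acyclic cofibrations), and the unit condition holds since $\qq$ is cofibrant in $\ch(\qq)$. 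I expect the only genuine obstacle to be the bookkeeping in the middle paragraph --- pinning down that $L$ carries the generating cofibrations of $\ch(\qq)$ to the generating cofibrations $S^{n-1} \otimes P \to D^n \otimes P$ of $d \acal_{dual}$ with $P \cong S^0$ --- after which, given the flatness of $\ocal_\fcal$ and $\ecal^{-1}\ocal_\fcal$ over $\qq$ and Theorem~\ref{thm:modelstructure}, everything is formal.
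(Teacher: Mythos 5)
Your proof is correct and follows the same route the paper intends: the lemma is stated there as an immediate consequence of the observation that $S^0$ is dualizable, so that $L$ carries the generating cofibrations $S^{n-1}\to D^n$ of $\ch(\qq)$ to generating cofibrations $S^{n-1}\otimes P\to D^n\otimes P$ of $d\acal_{dual}$ with $P\cong S^0$. Your write-up simply makes explicit the details (strong monoidality, preservation of weak equivalences via flatness over $\qq$, and the unit condition) that the paper leaves to the reader.
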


\section{Obtaining an algebraic category}\label{sec:cyclic}

The method of this section is the synthesis of three ideas.
The first idea is
to use the Hasse-Tate square from the introduction to separate the homotopical information of
$\torus$--equivariant spectra into pieces where
we can remove equivariance without losing any information.

For $\torus$--equivariant spectra, the relevant decomposition is to
separate the homotopical information coming from finite subgroups
from the homotopical information coming from the whole group.
For this separation
we will need a diagram of model categories
rather than a diagram of commutative rings.
We establish the categorical foundations in
the next subsection and then perform the separation in
Subsection \ref{subsec:separate}.

The second is that the correct way to remove equivariance is to take fixed points.
The primary example is that taking $\torus$--fixed points
gives a Quillen equivalence from
$D E \torus_+$--modules in rational $\torus$--equivariant spectra to
$D \B \torus_+$--modules in rational spectra.
Here $D E \torus_+$ is the Spanier--Whitehead dual
of $E \torus_+$ in $\torus$--spectra
and $D \B \torus_+$ is the
Spanier--Whitehead dual of $\B \torus_+$ in the
category of spectra.
See Subsection \ref{subsec:fixedpoints}.

With the separation complete and equivariance removed,
we use the results of
\cite{shiHZ} to move to an algebraic setting in
Subsection \ref{subsec:toalgebra}.
That is, we obtain a Quillen equivalence between rational $\torus$--spectra
and some combined cellularization--localization of an algebraic category.

The next step is to simplify that algebraic category into the
algebraic model $\cA (\torus)$,
by directly calculating the effects of these cellularizations and localizations.
This is the essence of the third idea: to leave any examination of localizations
or cellularizations until one is working with an algebraic category.
This occurs in Subsection \ref{subsec:ramod}, where we simplify the category created by the results of \cite{shiHZ} and remove a localization.
Finally in Subsection \ref{subsec:torsion}
we remove a cellularization to get to the algebraic model.

\subsection{Diagrams of model categories}

We will use several model categories that are
built from diagrams of model categories.
This idea has been studied in some detail in  \cite{gsmodules}. In this section we introduce the
relevant structures and leave most of the proofs to the reference.
We will only use one shape of diagram, the pullback diagram $\pscr$:
\[
\bullet \longrightarrow \bullet \longleftarrow \bullet.
\]
Pullbacks of model categories are also considered in detail in
\cite{berg11}.

\begin{definition}
A \textbf{$\pscr$--diagram of model categories} $R^\bullet $ 
is a pair of Quillen pairs
\[
\begin{array}{rcl}
L:
\acal
&
\adjunct
&
\bcal : R \\
F:
\ccal
&
\adjunct
&
\bcal : G \\
\end{array}
\]
with $L$ and $F$ the left adjoints.
We will usually draw this as the diagram below.
\[
\xymatrix{
\acal
\ar@<+1ex>[r]^L
&
\bcal
\ar@<+0.5ex>[l]^R
\ar@<-0.5ex>[r]_G
&
\ccal
\ar@<-1ex>[l]_F
}
\]
\end{definition}

A standard example comes from a $\pscr$--diagram of rings
$R=(R_1 \overset{f}{\rightarrow} R_2 \overset{g}{\leftarrow} R_3)$. Using the adjoint pairs
of extension and restriction of scalars we obtain a $\pscr$-diagram
of model categories $R^\bullet $ as below.
\[
\xymatrix@C+0.3cm{
R_1 \leftmod
\ar@<+1ex>[r]^{R_2 \otimes_{R_1} -}
&
R_2 \leftmod
\ar@<+0.5ex>[l]^{f^*}
\ar@<-0.5ex>[r]_{g^*}
&
R_3 \leftmod.
\ar@<-1ex>[l]_{R_2 \otimes_{R_3} -}
}
\]

\begin{definition}
Given a $\pscr$--diagram of model categories $R^\bullet $
we can define a new category, $R^\bullet \leftmod$.
The objects of this category are pairs of morphisms,
$\alpha \co La \to b$ and $\gamma \co Fc \to b$ in $\bcal$.
We usually abbreviate a pair $(\alpha \co La \to b, \gamma \co Fc \to b)$ 
to a quintuple $(a,\alpha, b, \gamma, c)$. We find this notation
suggestive but  emphasize that objects of
$R^{\bullet}\leftmod$ are not usually modules over a diagram of
rings.

A morphism in $R^\bullet \leftmod$ from
$(a,\alpha, b, \gamma, c)$ to $(a',\alpha', b', \gamma', c')$
is a triple of maps
$x \co a \to a'$ in $\acal$,
$y \co b \to b'$ in $\bcal$,
$z \co c \to c'$ in $\ccal$ such that we have a commuting diagram in $\bcal$
\[
\xymatrix{
La \ar[r]^\alpha
\ar[d]^{Lx}
& b
\ar[d]^y
& Fc \ar[l]_\gamma
\ar[d]^{Fz} \\
La' \ar[r]^{\alpha'}
& b'
& Fc' \ar[l]_{\gamma'}
}
\]
\end{definition}

Note that we could also have defined an object as a sequence
$(a,\bar{\alpha}, b, \bar{\gamma}, c)$.
where $\bar{\alpha} \co a \to Rb$ is a map in $\acal$
and $\bar{\gamma} \co c \to Gb$ is a map in $\ccal$.

We say that a map $(x,y,z)$ in $R^\bullet \leftmod$ is an objectwise
cofibration if $x$ is a cofibration of $\acal$,
$y$ is a cofibration of $\bcal$ and
$z$ is a cofibration of $\ccal$.
We define objectwise weak equivalences similarly.

\begin{lemma}\cite[Proposition 3.3]{gsmodules}
Consider a $\pscr$--diagram of model categories $R^\bullet $ as below,
with each category cellular and proper.
\[
\xymatrix{
\acal
\ar@<+1ex>[r]^L
&
\bcal
\ar@<+0.5ex>[l]^R
\ar@<-0.5ex>[r]_G
&
\ccal
\ar@<-1ex>[l]_F
}
\]
The category of $R^\bullet$--modules admits a cellular
proper model structure with cofibrations and weak 
equivalences defined objectwise. This is called the {\em diagram injective} model structure. 
\end{lemma}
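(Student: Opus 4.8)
The plan is to apply the general machinery of \cite[Section 3]{gsmodules} verbatim, so the task reduces to checking that the category $R^\bullet\leftmod$ together with the proposed objectwise classes fits the hypotheses of the relevant existence theorem there. First I would recall that $R^\bullet\leftmod$ can be realized as a category of diagrams: an object $(a,\alpha,b,\gamma,c)$ is precisely a functor from the pushout-shaped indexing category into the Grothendieck-style total category assembled from $\acal$, $\bcal$, $\ccal$ along the adjunctions, or more concretely it is an object of $\acal\times\bcal\times\ccal$ equipped with the two structure maps $La\to b$ and $Fc\to b$ in $\bcal$. The product model category $\acal\times\bcal\times\ccal$ is cellular and proper since each factor is (by hypothesis), and the two structure maps are governed by the left Quillen functors $L$ and $F$.

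The key steps, in order, are: (1) identify the generating cofibrations and generating trivial cofibrations of the putative model structure as the objectwise ones, i.e.\ maps that are an identity in two of the three slots and a generating (trivial) cofibration of $\acal$, $\bcal$, or $\ccal$ in the third, with the structure maps filled in using $L$, $F$ and pushouts — this is exactly the construction of a model structure on a category of diagrams with a Reedy-type or inductive cofibration filling; (2) verify the two-out-of-three and retract axioms for objectwise weak equivalences, which is immediate from the same axioms in each of $\acal$, $\bcal$, $\ccal$; (3) verify the lifting and factorization axioms, which is where the left Quillen functors $L$ and $F$ are needed: given an objectwise cofibration in the first slot one must produce the induced map in the middle slot by the pushout $b\cup_{La}La'$, and one checks this is again a cofibration of $\bcal$ because $L$ is left Quillen and cofibrations are closed under pushout; (4) deduce cellularity by exhibiting the generators as compact/small objects (inherited from the factors, since colimits in $R^\bullet\leftmod$ are computed objectwise), and deduce left/right properness from left/right properness of each factor together with the fact that pushouts and pullbacks in $R^\bullet\leftmod$ are objectwise. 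Since \cite[Proposition 3.3]{gsmodules} is cited as the source, the honest content of the proof here is simply to observe that the $\pscr$-diagram under consideration satisfies the standing hypotheses (each of $\acal$, $\bcal$, $\ccal$ cellular and proper), so the cited proposition applies directly; I would state this and refer the reader to \cite{gsmodules} for the detailed verifications.

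The main obstacle — and the only step that is not completely formal — is step (3), the factorization axiom, specifically producing a functorial factorization of an arbitrary map in $R^\bullet\leftmod$ through an objectwise (trivial) cofibration followed by an objectwise (trivial) fibration. One cannot simply factor in each slot independently, because the structure maps $La\to b$ and $Fc\to b$ must be respected; the correct procedure is the standard latching/matching object induction for diagram categories, which here is short because the shape $\bullet\to\bullet\leftarrow\bullet$ has only two non-identity arrows and both point into the same object, so one factors first in $\acal$ and $\ccal$, then pushes forward along $L$ and $F$ and factors in $\bcal$ relative to the resulting map from the pushout. Since $L$ and $F$ are left Quillen this pushout-corner construction preserves the relevant classes, and functoriality is inherited from the functorial factorizations in the three factor categories. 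This is precisely the argument carried out in \cite{gsmodules}, so in the present paper it suffices to invoke that reference.
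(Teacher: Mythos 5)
The paper gives no proof of this lemma beyond the citation of \cite[Proposition 3.3]{gsmodules}, so your overall strategy --- observe that the hypotheses (each vertex cellular and proper) are exactly those of the cited result and refer the reader there --- matches what the paper does. However, your sketch of the underlying argument contains a genuine error of direction: the factorization procedure you describe constructs the diagram \emph{projective} model structure, not the diagram \emph{injective} one asserted in the lemma.

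Concretely, you propose to factor first in $\acal$ and $\ccal$, push forward along the left Quillen functors $L$ and $F$, and then factor the induced map out of the pushout in $\bcal$. That latching-object induction produces factorizations whose second map is an objectwise (trivial) fibration and whose first map satisfies a latching condition at the middle vertex; it yields the structure with objectwise fibrations and weak equivalences. In the diagram injective structure the cofibrations and weak equivalences are objectwise, so the fibrations are \emph{not} objectwise: a map $(x,y,z)$ is a fibration exactly when $y$ is a fibration in $\bcal$ and the relative matching maps $a \to a' \times_{Rb'} Rb$ and $c \to c' \times_{Gb'} Gb$ are fibrations. The required factorizations are therefore built dually: factor $y$ in $\bcal$ first, then pull back along the right adjoints $R$ and $G$ and factor the resulting relative matching maps in $\acal$ and $\ccal$. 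Your factorization into (objectwise cofibration, objectwise trivial fibration) does not establish the lifting axiom for the injective structure, because an objectwise trivial fibration need not have the right lifting property against all objectwise cofibrations: solving the lifting problem at the middle vertex compatibly with lifts already chosen at the outer vertices is a lifting problem for the corner map $A_b \cup_{LA_a \sqcup FA_c} (LB_a \sqcup FB_c) \to B_b$, and this is a cofibration only under the latching condition, which objectwise cofibrations do not satisfy in general. The same confusion affects your step (1): the generating cofibrations you describe (free diagrams on generating cofibrations of the vertices) generate the projective structure. The fix is to run the matching-object version of your induction, which is what \cite{gsmodules} does; with that correction the reduction to the cited proposition is exactly the paper's (implicit) argument.
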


While there is also a diagram projective model structure, 
in this paper we only use the diagram injective model structure (and cellularizations thereof)
on diagrams of model categories.

Now consider maps of $\pscr$--diagrams of model categories.
Let $R^\bullet $ and $S^\bullet $ be two such diagrams,
where $R^\bullet $ is as above and $S^\bullet $ is given below.
\[
\xymatrix{
\acal'
\ar@<+1ex>[r]^{L'}
&
\bcal'
\ar@<+0.5ex>[l]^{R'}
\ar@<-0.5ex>[r]_{G'}
&
\ccal'
\ar@<-1ex>[l]_{F'}
}
\]
Now we assume that we have Quillen adjunctions as below
such that $P_2 L$ is naturally isomorphic to
$L' P_1$ and $P_2 F$ is naturally isomorphic to
$F' P_3$.
\[
\begin{array}{rcl}
P_1:
\acal
&
\adjunct
&
\acal' : Q_1 \\
P_2:
\bcal
&
\adjunct
&
\bcal' : Q_2 \\
P_3:
\ccal
&
\adjunct
&
\ccal' : Q_3 \\
\end{array}
\]
We then obtain a Quillen adjunction $(P,Q)$ between
$R^\bullet \leftmod$ and $S^\bullet \leftmod$.
For example, the left adjoint $P$ takes the object
$(a,\alpha,b,\gamma,c)$ to
$(P_1 a, P_2 \alpha, P_2 b, P_2 \gamma, P_3 c)$.
The commutativity assumptions ensure that this is an
object of $S^\bullet \leftmod$. It is easy to see the following

\begin{lemma}\label{lemma:QEondiag}
If the Quillen adjunctions $(P_i, Q_i)$ are
Quillen equivalences then the adjunction $(P,Q)$ between
$R^\bullet \leftmod$ and $S^\bullet \leftmod$ is a
Quillen equivalence.
\end{lemma}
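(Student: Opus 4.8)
The plan is to reduce the statement to the hypotheses of Lemma~\ref{lemma:QEondiag}, namely that a Quillen equivalence between the data of two $\pscr$--diagrams of model categories induces a Quillen equivalence between the associated categories of modules. Thus the first thing to check is that the three componentwise adjunctions $(P_i,Q_i)$ are genuine Quillen equivalences; this should be immediate from whatever source supplies the $(P_i,Q_i)$ in the particular application (a smashing localization, a change of rings along a weak equivalence, or a cellularization), so there is essentially nothing to prove here beyond citing the relevant input. One then only needs to verify the two commutativity conditions: $P_2 L \cong L' P_1$ and $P_2 F \cong F' P_3$. These are the natural isomorphisms making $(P_1,P_2,P_3)$ a morphism of $\pscr$--diagrams, and in each concrete case they come from standard base-change or naturality identities for the functors involved.

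Next I would spell out the induced adjunction $(P,Q)$ explicitly on objects: on a quintuple $(a,\alpha,b,\gamma,c)$ the functor $P$ returns $(P_1 a,\, \widetilde{P_2\alpha},\, P_2 b,\, \widetilde{P_2\gamma},\, P_3 c)$, where $\widetilde{P_2\alpha}$ is $P_2\alpha$ precomposed with the structure isomorphism $L' P_1 a \xrightarrow{\cong} P_2 L a$, and symmetrically for $\gamma$; the right adjoint $Q$ is described dually using $Q_i$ together with the mate isomorphisms $Q_1 R' \cong R Q_2$ and $Q_3 G' \cong G Q_2$. One checks $(P,Q)$ is a Quillen pair by observing that cofibrations and weak equivalences in the diagram injective model structures are detected objectwise, so $P$ preserves (acyclic) cofibrations because each $P_i$ does.

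The key step is the equivalence. Because weak equivalences in $R^\bullet\leftmod$ and $S^\bullet\leftmod$ are objectwise, and because fibrant objects in the diagram injective structure are (at least) objectwise fibrant, one can test the derived unit and counit objectwise. Given a cofibrant $(a,\alpha,b,\gamma,c)$ and a fibrant target, the map to be checked is a weak equivalence in $R^\bullet\leftmod$ iff its $\acal$--, $\bcal$--, $\ccal$--components are weak equivalences; but those components are precisely the derived unit/counit maps for the Quillen equivalences $(P_i,Q_i)$ evaluated at $a$, $b$, $c$ respectively, hence weak equivalences. The only subtlety is ensuring that a cofibrant replacement (resp.\ fibrant replacement) of a diagram module is componentwise cofibrant (resp.\ fibrant), which holds by the definition of the diagram injective structure, and that the structure maps $\alpha,\gamma$ do not interfere --- they do not, since the comparison maps are natural and the structure maps are simply carried along.

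The main obstacle, such as it is, is bookkeeping rather than mathematics: one must keep careful track of the coherence isomorphisms $P_2 L\cong L' P_1$ and $P_2 F\cong F' P_3$ (and their adjoint mates) so that the functors $P$ and $Q$ really land in the correct module categories and really form an adjoint pair. Once this coherence is in place, the objectwise nature of the diagram injective model structure does all the remaining work, and the result follows. In fact, since this is exactly Lemma~\ref{lemma:QEondiag} already stated and attributed in the preceding text, for our purposes it suffices to invoke it; the discussion above indicates why it is true.
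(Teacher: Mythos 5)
Your argument is correct and matches what the paper intends: the paper offers no proof beyond "it is easy to see", and the expected justification is exactly yours — cofibrations and weak equivalences in the diagram injective structure are objectwise, fibrant objects are objectwise fibrant, so the derived unit and counit of $(P,Q)$ are computed componentwise and are weak equivalences because each $(P_i,Q_i)$ is a Quillen equivalence. The only cosmetic issue is the framing of your first and last paragraphs, which read as if you were invoking the lemma rather than proving it; the substantive middle paragraphs are the proof.
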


Now we turn to monoidal considerations.
There is an obvious monoidal product for $R^\bullet \leftmod$,
provided that each of $\acal$, $\bcal$ and $\ccal$ is monoidal
and that the left adjoints $L$ and $F$ are strong monoidal.
\[
(a,\alpha, b, \gamma, c) \smashprod (a',\alpha', b', \gamma', c')
:=(a \smashprod a', \alpha \smashprod \alpha',
b \smashprod b',\gamma \smashprod \gamma',c \smashprod c')
\]
Let $S_\acal$ be the unit of $\acal$,
$S_\bcal$ be the unit of $\bcal$ and let
$S_\ccal$ be the unit of $\ccal$.
Since $L$ and $F$ are monoidal, we have maps
$\eta_\acal \co L S_\acal \to S_\bcal$
and $\eta_\ccal \co F S_\ccal \to S_\bcal$.
The unit of the monoidal product on
$R^\bullet \leftmod$ is
$(S_\acal, \eta_\acal, S_\bcal, \eta_\ccal, S_\ccal)$.

It is worth noting that this category has an internal function object
when $\acal$, $\bcal$ and $\ccal$ are closed monoidal categories and thus itself is closed.

\begin{lemma}
Consider a $\pscr$--diagram of model categories $R^\bullet $ such that each
vertex is a cellular monoidal model category.
Assume further that the two adjunctions of the diagram are strong monoidal
Quillen pairs.
Then $R^\bullet \leftmod$ is also a monoidal model category.
If each vertex also satisfies the monoid axiom,
so does $R^\bullet \leftmod$.
\end{lemma}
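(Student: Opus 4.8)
The plan is to verify the model category axioms for $R^\bullet\leftmod$ equipped with the objectwise monoidal product and then check the compatibility (pushout-product and unit) axioms, reducing everything vertex-by-vertex to the hypotheses on $\acal$, $\bcal$, $\ccal$. The underlying model structure is the diagram injective one from \cite[Proposition 3.3]{gsmodules}, which exists because each vertex is cellular and proper (a monoidal model category is in particular a model category, and cellularity is assumed); the previous lemma already records that it is cellular and proper. The remaining work is entirely about the monoidal structure and its interaction with cofibrations and weak equivalences.

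First I would spell out the pushout-product (SM7) axiom. Given objectwise cofibrations $(x,y,z)\co(a,\alpha,b,\gamma,c)\to(a',\alpha',b',\gamma',c')$ and $(x',y',z')$ between two further objects, one forms the pushout-product map in $R^\bullet\leftmod$. The key observation is that colimits and the monoidal product in $R^\bullet\leftmod$ are both computed objectwise, so the pushout-product map of $(x,y,z)$ and $(x',y',z')$ has as its three components exactly the pushout-products $x\Box x'$ in $\acal$, $y\Box y'$ in $\bcal$, $z\Box z'$ in $\ccal$. Here one must be slightly careful: the left adjoints $L$ and $F$ being strong monoidal and left adjoints means they preserve pushouts and the monoidal product, so the structure maps of the pushout-product object are the ones induced in the evident way, and no correction terms appear. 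Since cofibrations and weak equivalences are objectwise, $x\Box x'$, $y\Box y'$, $z\Box z'$ are each cofibrations (respectively trivial cofibrations if one of the inputs is a weak equivalence) by SM7 in $\acal$, $\bcal$, $\ccal$, and hence the pushout-product in $R^\bullet\leftmod$ is an objectwise (trivial) cofibration, as required.

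Next I would check the unit axiom. The unit is $\mathbf{1}=(S_\acal,\eta_\acal,S_\bcal,\eta_\ccal,S_\ccal)$. Take a cofibrant replacement $q\co Q\mathbf 1\to\mathbf 1$; one needs $Q\mathbf 1\smashprod M\to\mathbf 1\smashprod M=M$ to be a weak equivalence for every cofibrant $M=(a,\alpha,b,\gamma,c)$. Again the product and weak equivalences are objectwise, so it suffices that each of $(Q\mathbf 1)_\acal\smashprod a\to a$, etc., be a weak equivalence. The subtlety is that the vertices of $Q\mathbf 1$ need not be cofibrant replacements of $S_\acal$, $S_\bcal$, $S_\ccal$ individually, and $a,b,c$ need not be cofibrant in their respective categories (objectwise cofibrancy of $M$ as a diagram does force $a,b,c$ cofibrant here, since the initial object of $R^\bullet\leftmod$ has objectwise-initial vertices — this is worth remarking). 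One handles this the standard way (as in \cite[4.2]{hov99}): compare $Q\mathbf 1\smashprod M$ with $QS_\acal\smashprod a$-type terms through the projection maps, using that each $L$, $F$ preserves cofibrant objects and the monoidal product, so that the three components of $Q\mathbf 1$ receive maps from genuine cofibrant replacements of the units and one reduces to the unit axioms in $\acal$, $\bcal$, $\ccal$ together with Ken Brown's lemma. I expect this unit-axiom bookkeeping to be the main obstacle, precisely because the cofibrant replacement of the diagram-unit is not the diagram of cofibrant replacements of the units.

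Finally, the monoid axiom: one must show that maps built from pushouts of maps of the form (trivial cofibration)$\smashprod$(arbitrary object) in $R^\bullet\leftmod$ are weak equivalences, closed under transfinite composition. Once more, everything in sight — pushouts, transfinite composites, the monoidal product — is objectwise, and the classes of maps appearing are objectwise built from (trivial cofibration)$\smashprod$(object) in each vertex; since each vertex satisfies the monoid axiom, the resulting maps are objectwise weak equivalences, hence weak equivalences in $R^\bullet\leftmod$. This gives the final clause. I would close by noting that the closed structure mentioned just before the lemma, together with the projection Quillen pairs to the vertices, makes the verifications above clean, but that no new ideas beyond "everything is computed objectwise and $L,F$ are monoidal left adjoints" are needed.
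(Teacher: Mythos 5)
Your proof is correct and takes essentially the same approach as the paper, whose entire argument is the one-line observation that since cofibrations and weak equivalences are defined objectwise (and the product and colimits are computed objectwise, using that $L$ and $F$ are strong monoidal left adjoints), the pushout-product and monoid axioms reduce to the corresponding axioms at each vertex. Your worry about the unit axiom is unnecessary: in the diagram injective structure a cofibrant replacement of the unit is objectwise cofibrant and objectwise weakly equivalent to the vertex units, hence already an objectwise cofibrant replacement, so the unit axiom also follows vertexwise.
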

\begin{proof}
Since the cofibrations and weak equivalences are defined objectwise,
the pushout product and monoid axioms hold provided they do so
in each model category in the diagram $R^\bullet $.
\end{proof}

We can also extend our monoidal considerations to maps
of diagrams.
Return to the setting of a map $(P,Q)$
of $\pscr$--diagrams from $R^\bullet $ to $S^\bullet $ as described above.
If we assume that each of the adjunctions
$(P_1, Q_1)$, $(P_2, Q_2)$ and $(P_3, Q_3)$
is a symmetric monoidal Quillen equivalence, then we see that
$(P,Q)$ is a symmetric monoidal Quillen equivalence.

With these formalities out of the way, we are ready to
move from the model category of rational $\torus$ spectra to
modules over a $\pscr$--diagram of model categories.

\subsection{Isotropy separation}\label{subsec:separate}

In this subsection we separate the homotopical information of rational $\torus$--spectra
into three parts. The first part takes care of the homotopical information coming from
the finite cyclic subgroups. The second
part deals with the homotopical information coming from $\torus$.
The third part is a comparison term which enforces some compatibility
conditions on the two other parts.

We achieve this separation by replacing the category of
rational $\torus$--spectra with a
Quillen equivalent $\pscr$--diagram of model categories.

Before we do that, let us first recall some basic definitions and properties for $\torus$-spectra.

\begin{definition}
Let $\TSP$ be the category of $\torus$--equivariant orthogonal spectra
indexed on a complete $\torus$--universe $\ucal$ considered with the stable model structure. We denote it by $\TSP$.
\end{definition}

This model category is monoidal, proper and cellular \cite{mm02}. 
The weak equivalences are those maps $f$ such that $\pi_*^H(f)$ is an isomorphism
for all closed subgroups $H$ of $\torus$.

Following \cite[Section 5]{barnessplitting} and using \cite[Theorem IV.6.3]{mm02}, we localize this model category at the rational sphere spectrum $S_\qq$. 
That is, we leave the underlying category unchanged 
and alter the model structure. We call the weak equivalences of the localized model structure
\textbf{rational equivalences}:  a map $f$ is a rational equivalence 
if $\pi_*^H(f) \otimes \qq$ is an isomorphism for all closed subgroups $H$ of $\torus$. 
We call this model structure the \textbf{rational model structure} and use the notation $L_{S_\qq} \TSP$.

The localized model category is
still proper, cellular, monoidal and stable.

\begin{definition}\label{def:universalspaces}
Let $\fcal$ be the collection of finite cyclic subgroups of $\torus$.
There is a universal space for this family called $E \fcal$
where $E \fcal^H$ is non--equivariantly contractible
for each finite cyclic subgroup $H$
and $E \fcal^{K} = \emptyset$ for all other subgroups $K$.
We define $\widetilde{E} \fcal$ via the cofibre sequence of
$\torus$--spaces,
\[
E \fcal_+ \lra S^0 \lra \widetilde{E} \fcal.
\]
We define $DE \fcal_+$ to be $F(E \fcal_+, N^\#S)$. Here $N^\#$ is the lax monoidal right adjoint described in \cite[Theorem IV.3.9]{mm02} from EKMM $\torus$--equivariant $S$-modules to $\TSP$.
\end{definition}

Recall that $N^\#$ is the right adjoint of a Quillen equivalence
when $\TSP$ is considered with the positive stable model structure (see \cite[Chapter IV]{mm02} for more details). The spectrum $DE \fcal_+$ is
a commutative ring spectrum, which is fibrant in the positive stable model structure
on $\TSP$.

We can express the above cofibre sequence
as the Hasse-Tate homotopy pullback square
of $\torus$--equivariant spectra \cite[Section 17]{gremay95}.
To see that it is a homotopy pullback
square, note that the homotopy fibres of the top and
bottom row are weakly equivalent (where the bottom row is the top one smashed with $DE \fcal_+$)
\[
\xymatrix{
\sphspec \ar[r] \ar[d] &
\widetilde{E} \fcal \ar[d] \\
DE \fcal_+      \ar[r] &
DE \fcal_+ \smashprod \widetilde{E} \fcal .
}
\]

We have three model categories:
\begin{itemize}
\item $L_{S_\bQ}(DE \fcal_+ \leftmod)$, which captures the behaviour of the finite cyclic groups;
\item $L_{S_\bQ \smashprod \widetilde{E} \fcal} \TSP$,
which captures the behaviour of $\torus$;
\item $L_{S_\bQ \smashprod DE \fcal_+ \smashprod \widetilde{E} \fcal}
     (DE \fcal_+ \leftmod)$ which captures the interaction of the first two.
\end{itemize}
Now we can give our diagram of model categories that
separates the behaviour of the finite cyclic groups from the rest.

\begin{definition}
We define $S^\bullet$ to be the following diagram of model categories.
\[
\xymatrix@C+0.5cm{
L_{S_\bQ}(DE \fcal_+ \leftmod)
\ar@<+1ex>[r]^-{\Id}
&
L_{S_\bQ \smashprod DE \fcal_+ \smashprod \widetilde{E} \fcal}(DE \fcal_+ \leftmod)
\ar@<+0.5ex>[l]^-{\Id}
\ar@<-0.5ex>[r]_-{U}
&
L_{S_\bQ \smashprod \widetilde{E} \fcal} \TSP
\ar@<-1ex>[l]_-{DE \fcal_+ \smashprod -}
}
\]
Since all of the model categories in the diagram are
cellular, proper, monoidal model categories,
we have a cellular proper stable monoidal
model category $S^\bullet \leftmod$ that satisfies
the monoid axiom.
\end{definition}

Given an $X \in \TSP$, we have an $S^\bullet$--module
\[
S^\bullet \smashprod X
: =
(DE \fcal_+ \smashprod X, \id, DE \fcal_+ \smashprod X, \id , X).
\]
This functor has a right adjoint.
Let $A=(a,\alpha,b,\gamma, c)$ be an $S^\bullet$--module.
Then there are maps in $\TSP$: $ a \to b$ and $c \to DE \fcal_+ \smashprod c \to b $, where in the composite the first map is the unit of the adjunction $(DE \fcal_+ \smashprod - , U)$ and the second map is $\gamma$.
Thus we have a diagram in $\TSP$:
$a \rightarrow b \leftarrow c$. We write $\pullback A$
for the pullback of
this diagram in $\TSP$.
We assemble this construction into the following
result, the proof of which is entirely routine.

\begin{proposition}\label{prop:Smodulesadjunction}
There is a strong symmetric monoidal Quillen adjunction
\[
\xymatrix{
S^\bullet \smashprod - :
L_{S_{\bQ}}(\TSP)
\ar@<+1ex>[r] &
S^\bullet \leftmod
: \pullback \ar@<+0.5ex>[l]}
\]
\end{proposition}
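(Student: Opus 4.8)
The plan is to verify each claim in Proposition~\ref{prop:Smodulesadjunction} in turn: that $(S^\bullet \smashprod -, \pullback)$ is an adjunction, that it is a Quillen adjunction, and that the left adjoint is strong symmetric monoidal. First I would spell out the adjunction. Given $X \in L_{S_\qq}\TSP$ and an $S^\bullet$--module $A = (a,\alpha,b,\gamma,c)$, a map $S^\bullet \smashprod X \to A$ consists of maps $DE\fcal_+ \smashprod X \to a$, $DE\fcal_+ \smashprod X \to b$ and $X \to c$ in the relevant categories, compatible with the structure maps. Using the adjunctions $(DE\fcal_+ \smashprod -, U)$ at the two left vertices, the first two pieces of data are equivalent to maps $X \to Ua$ and $X \to Ub$ in $\TSP$; chasing the commuting squares in the definition of a morphism of $S^\bullet$--modules shows these, together with $X \to c$, are exactly a cone on the diagram $a \to b \leftarrow c$ (where $c \to b$ is the composite through the unit of $(DE\fcal_+\smashprod -, U)$ as in the text preceding the statement), hence a single map $X \to \pullback A$. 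Naturality in both variables is immediate from the universal property of the pullback, giving the adjunction.

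Next I would check this is a Quillen adjunction. The cleanest route is to show the right adjoint $\pullback$ preserves fibrations and trivial fibrations. A fibration (resp.\ trivial fibration) in $S^\bullet \leftmod$ with its diagram injective model structure is an objectwise fibration (resp.\ objectwise weak equivalence and fibration) in the three vertex categories; here the left two vertices are Bousfield localizations of $DE\fcal_+\leftmod$ and the right vertex is a localization of $\TSP$. One observes that the forgetful functor $U$ and the functor $DE\fcal_+ \smashprod -$ interact with the various Bousfield localizations appropriately (the localizations are all at smashing-type localizations compatible with these functors, and $U$ is a right Quillen functor for the relevant localized structures), so that applying $U$ or the identity to the vertices of $a \to b \leftarrow c$ yields fibrations (resp.\ trivial fibrations) in $L_{S_\qq}\TSP$; since $L_{S_\qq}\TSP$ is right proper, the pullback of fibrations (resp.\ trivial fibrations) is again one, so $\pullback A$ is fibrant-behaved as required. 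Alternatively, and perhaps more transparently, one checks directly that $S^\bullet \smashprod -$ is a left Quillen functor: it sends a (trivial) cofibration $X \to Y$ to the triple $(DE\fcal_+ \smashprod X \to DE\fcal_+ \smashprod Y, \ \mathrm{same}, \ X \to Y)$, and each component is a (trivial) cofibration because $DE\fcal_+ \smashprod -$ is left Quillen into each localized module category and the identity $L_{S_\qq}\TSP \to L_{S_\qq \smashprod \widetilde E\fcal}\TSP$ into the localization is left Quillen.

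For the monoidal statement, I would use the explicit monoidal product on $R^\bullet \leftmod$ recalled in the previous subsection. The left adjoint sends $X$ to $(DE\fcal_+ \smashprod X, \id, DE\fcal_+ \smashprod X, \id, X)$; applied to $X \smashprod Y$ this is $(DE\fcal_+ \smashprod X \smashprod Y, \id, DE\fcal_+ \smashprod X \smashprod Y, \id, X \smashprod Y)$, and since $DE\fcal_+$ is a commutative ring spectrum, $DE\fcal_+ \smashprod X \smashprod Y \cong (DE\fcal_+ \smashprod X) \smashprod_{DE\fcal_+} (DE\fcal_+ \smashprod Y)$ naturally, which is precisely the $\bcal$-component of $(S^\bullet\smashprod X) \smashprod (S^\bullet \smashprod Y)$; the $\acal$- and $\ccal$-components match on the nose. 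The unit $S^\bullet \smashprod S = (DE\fcal_+, \id, DE\fcal_+, \id, S)$ is exactly the unit $(S_\acal, \eta_\acal, S_\bcal, \eta_\ccal, S_\ccal)$ of $S^\bullet\leftmod$ described above (the structure maps $\eta$ being identities here since the left adjoints in the diagram are $\id$ and $DE\fcal_+\smashprod -$, which carry units to units). Compatibility of these isomorphisms with the associativity and symmetry constraints is a routine diagram chase, so $S^\bullet \smashprod -$ is strong symmetric monoidal.

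I expect the main obstacle to be the Quillen-adjunction verification, specifically confirming that $\pullback$ behaves correctly with respect to the Bousfield localizations at the three vertices --- one must be careful that the localizing maps, the functor $U$, and the functor $DE\fcal_+\smashprod -$ interact so that objectwise fibrations pull back to fibrations in $L_{S_\qq}\TSP$. The text describes this proof as ``entirely routine,'' so in the write-up I would record the adjunction explicitly, note that each component of $S^\bullet \smashprod -$ is a left Quillen functor between the appropriately localized categories (which is where the smashing nature of the localizations and right-properness of $\TSP$ are used), and then dispatch the monoidal claim by the identifications above.
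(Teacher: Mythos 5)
The paper declares this proof ``entirely routine'' and omits it, and your write-up supplies exactly the intended verification: the explicit identification of $\pullback$ as the right adjoint, the observation that each component of $S^\bullet \smashprod -$ is a left Quillen functor into the corresponding (localized) vertex category so that objectwise cofibrations and weak equivalences are preserved, and the componentwise identification $(DE\fcal_+\smashprod X)\smashprod_{DE\fcal_+}(DE\fcal_+\smashprod Y)\cong DE\fcal_+\smashprod X\smashprod Y$ giving strong symmetric monoidality. One caution on your first, ``cleanest'' route: fibrations in the diagram injective model structure are \emph{not} simply the objectwise fibrations (they also involve conditions on the adjoint structure maps $a\to Rb$, $c\to Gb$), so that characterization is wrong as stated; your alternative left-Quillen-functor argument is the correct one and should be the one recorded.
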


We want to turn this adjunction into a Quillen equivalence.
To do so, we apply
the Cellularization Principle of  \cite[Proposition 2.7]{gscell}.
The idea is to cellularize (right Bousfield localize, see also Section \ref{subsec:cellularization})
the right hand model category so that this adjunction induces
a Quillen equivalence.

The generators for the homotopy category of
$L_{S_\bQ}(\TSP)$
are all suspensions and desuspensions of objects of the form
$\Sigma^\infty \torus/H_+$ for $H$ a subgroup of $\torus$.
For later purposes (see Section \ref{subsec:torsion}),
we want a set of cells with simpler algebraic models.
For every natural $n>1$ let
\[
\sigma_n=\torus \wedge_{C_n}e_{C_n}S^0
\]
where $e_{C_n}$ denotes the idempotent in the Burnside ring
for $C_n$ (cyclic group of order $n$)  corresponding to $C_n$.
By \cite[Lemma 2.1.5]{gre99},
\[
\torus/C_{n+}= \bigvee_{C_m \subseteq C_n}\sigma_m
\]
hence we know that the set
\[
K= \{ \Sigma^k S^0 \mid k \in \zz \} \bigcup \{\Sigma^k \sigma_n \mid \ n>1, \ k \in \zz \}
\]
is a set of (cofibrant and homotopically small) generators for
$L_{S_\bQ}(\TSP)$.

Let $K_{\Top}$ be the set of images
of the objects from $K$ under the functor $S^\bullet \smashprod-$ (up to isomorphism).
The elements of this set $K_{\Top}$ will be called basic \textbf{cells}.

To apply the Cellularization Principle we need to know that these cells are homotopically small
(this is also known as small or compact, see
Definition \ref{def:hocompact}).
First note that if $X$ is homotopically small in
$\TSP$ then it is so in $L_{S_\bQ} \TSP$
(since rationalization is a smashing localization).

Now consider the following three elements of $S^\bullet \leftmod$
\[
(\ast, \ast, DE\fcal_+ \smashprod X, \ast , \ast) \quad
(\ast, \ast, DE\fcal_+ \smashprod X, \id , X) \quad
(DE \fcal_+ \smashprod X, \id, DE\fcal_+ \smashprod X, \ast, \ast).
\]
It is routine to check that these are cofibrant and
homotopically small whenever $X$ is cofibrant
and homotopically small in $\TSP$.
Finally let $X$ be cofibrant in $\TSP$. There is a homotopy pushout diagram
as below, where the final term is $S^\bullet \smashprod X$.
\[
\xymatrix{
(\ast, \ast, DE \fcal_+ \smashprod X, \ast , \ast)
\ar[r] \ar[d] &
(\ast, \ast, DE \fcal_+ \smashprod X, \id , X)
\ar[d] \\
(DE \fcal_+ \smashprod X, \id, DE \fcal_+ \smashprod X, \ast, \ast)
\ar[r] &
(DE \fcal_+ \smashprod X, \id, DE \fcal_+ \smashprod X, \id , X)
}
\]
Homotopically small objects are preserved by homotopy pushouts
(consider the associated cofibre sequence). Hence
$S^\bullet \smashprod X$ is homotopically small in $S^\bullet \leftmod$
whenever $X$ is cofibrant and homotopically small.
Since these two conditions hold for the generators of $\TSP$,
we see that every element of $K_{\Top}$ is homotopically small.

Note that the model category $L_{DE \fcal_+ \smashprod \widetilde{E} \fcal}DE \fcal_+ \leftmod$ is the same as the model category $L_{\Sigma^* f}DE \fcal_+ \leftmod$ where $f:  DE \fcal_+  \longrightarrow DE \fcal_+ \smashprod \widetilde{E} \fcal$ and $\Sigma^* f$ denotes the set of all (integer) suspensions and desuspensions of $f$. This is a similar result to \cite[Lemma 4.14]{brstable}, since
$\widetilde{E} \fcal$--localization (in $\TSP$)
is given by smashing with the map
$S^0 \lra \widetilde{E} \fcal$.

\begin{proposition}\label{prop:Smodulesequivalence}
There is a Quillen equivalence
\[
\xymatrix{
S^\bullet \smashprod - :
L_{S_{\bQ}}(\TSP)
\ar@<+1ex>[r] &
K_{\Top} \cell S^\bullet \leftmod
: \pullback \ar@<+0.5ex>[l]}
\]
\end{proposition}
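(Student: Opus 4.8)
The plan is to deduce the Quillen equivalence from the Cellularization Principle \cite[Proposition 2.7]{gscell}, applied to the strong symmetric monoidal Quillen adjunction $(S^\bullet \smashprod -, \pullback)$ of Proposition \ref{prop:Smodulesadjunction}. Recall that this principle concerns a Quillen pair between stable model categories together with a set $\mathcal{Q}$ of cofibrant, homotopically small objects of the source whose images under the left adjoint are homotopically small in the target; it then yields a Quillen equivalence between the cellularization of the source at $\mathcal{Q}$ and the cellularization of the target at the image of $\mathcal{Q}$, \emph{provided} the derived unit is a weak equivalence on every object of $\mathcal{Q}$.

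First I would verify the structural hypotheses. Both $L_{S_\bQ}(\TSP)$ and $S^\bullet \leftmod$ are stable, proper and cellular (the latter by the definition of $S^\bullet$), so the relevant cellularizations exist and are again stable. We take $\mathcal{Q} = K$: its members are cofibrant and homotopically small, and we have checked above that $K_{\Top} = (S^\bullet \smashprod -)(K)$ consists of cofibrant (as $S^\bullet \smashprod -$ is left Quillen) homotopically small objects of $S^\bullet \leftmod$. Since $K$ is a set of generators for the homotopy category of $L_{S_\bQ}(\TSP)$, the $K$--cellularization there is trivial, i.e.\ $K \cell L_{S_\bQ}(\TSP) = L_{S_\bQ}(\TSP)$. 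Hence, once the derived unit condition is verified, the Cellularization Principle yields precisely the asserted equivalence.

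So it remains to show that for every cofibrant $X \in \TSP$ the derived unit $X \longrightarrow \mathbf{R}\,\pullback\,(S^\bullet \smashprod X)$ is a weak equivalence; this in particular covers the objects of $K$. Here $S^\bullet \smashprod X = (DE \fcal_+ \smashprod X, \id, DE \fcal_+ \smashprod X, \id, X)$, and I would compute $\mathbf{R}\,\pullback$ by taking a fibrant replacement in the diagram injective model structure (the fibrant objects of $K_{\Top} \cell S^\bullet \leftmod$ coincide with those of $S^\bullet \leftmod$, since a right Bousfield localization leaves the fibrations unchanged). Using that $\widetilde{E} \fcal$--localization in $\TSP$ is smashing, given by $- \smashprod \widetilde{E} \fcal$, and that rationalization is smashing, the three vertices of such a fibrant replacement are, up to weak equivalence, $DE \fcal_+ \smashprod X$, $DE \fcal_+ \smashprod \widetilde{E} \fcal \smashprod X$ and $\widetilde{E} \fcal \smashprod X$, with the two structure maps the evident comparison maps; and since the object is fibrant, $\pullback$ of it computes the homotopy pullback in $\TSP$ of the resulting cospan $DE \fcal_+ \smashprod X \longrightarrow DE \fcal_+ \smashprod \widetilde{E} \fcal \smashprod X \longleftarrow \widetilde{E} \fcal \smashprod X$ (this is exactly how the diagram injective structure of \cite[Proposition 3.3]{gsmodules} is arranged). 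By the Hasse-Tate square of the introduction, smashed with $X$, that homotopy pullback is $X$, and the derived unit is identified with the Hasse-Tate comparison map, which is an equivalence.

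I expect the main obstacle to be this last step: identifying a fibrant replacement of $S^\bullet \smashprod X$ precisely enough to recognize $\pullback$ of it as the Hasse-Tate homotopy pullback. Concretely, this means confirming that fibrancy in the diagram injective model structure of \cite{gsmodules} is exactly the condition under which the strict pullback models the homotopy pullback, and that the localizations built into the three vertices of $S^\bullet$ reproduce the three corners of the Hasse-Tate square smashed with $X$. The remaining ingredients — stability, homotopical smallness of $K_{\Top}$, and the fact that $K$ generates — have already been assembled above, so everything else is quotation of \cite{gscell}.
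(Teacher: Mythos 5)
Your proposal is correct and follows essentially the same route as the paper: both invoke the Cellularization Principle of \cite[Proposition 2.7]{gscell}, reduce to checking the derived unit on the generating set $K$, and identify the derived right adjoint of $S^\bullet \smashprod k$ with the homotopy pullback of the Hasse--Tate cospan smashed with $k$. The only cosmetic difference is that you verify the derived unit condition for all cofibrant $X$ rather than just the cells, which is harmless.
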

\begin{proof}
This follows from the Cellularization Principle,
\cite[Proposition 2.7]{gscell}.
It suffices to show that the derived unit is a weak equivalence on the
set $K$ of generators for the left hand side, which are shifts of the objects
$\sigma_n$ for $n>1$ and $S^0$.
Each such object is cofibrant and homotopically small,
as are the elements of $K_{\Top}$.

The derived left adjoint on cofibrant objects (such as the elements of $K$), is simply  the left adjoint. The right derived functor on objects
of the form $S^\bullet \smashprod k$ for $k \in K$ is weakly equivalent to taking a homotopy pullback of the following diagram:
\[
\xymatrix@R=2.5pc @C=4.5pc{
 &  S_\bQ \wedge \widetilde{E}\cF \wedge k
\ar[d]^{\id \wedge \id \wedge \lambda \wedge \mathrm{Id}} \\
S_\bQ \wedge DE\cF_+\wedge k
\ar[r]_(.45){\mathrm{Id} \wedge a \wedge \mathrm{Id} \wedge \mathrm{Id}} &
 S_\bQ \wedge \widetilde{E}\cF \wedge DE\cF_+\wedge k
}
\]
where the map $a \co S^0 \lra \widetilde{E}\cF$
is the map to the cofibre
and $\lambda$ is the unit map.
Since homotopy pullbacks commute with smash products, the homotopy pullback of the above is weakly equivalent to the homotopy pullback of
\[
\xymatrix@R=2pc{
 & \widetilde{E}\cF
\ar[d] \\
DE\cF_+ \ar[r] & DE\cF_+\wedge \widetilde{E}\cF
}
\]
(in the category $\TSP$) smashed with $S_\bQ \wedge k$.
But the homotopy pullback of the diagram above is $\sphspec$, as discussed above. Hence the derived unit is a weak equivalence (in $L_{S_\bQ}(\TSP)$) on the cells $k \in K$.
\end{proof}

We will show in Proposition \ref{prop:monoidalleft} below that this 
Quillen equivalence is actually a symmetric monoidal Quillen equivalence.

Thus we have separated the homotopical information of $\TSP$ into a diagram of three
model categories. The advantage of doing so is that we may now remove the equivariance
from the model category whilst keeping the correct homotopy category.

\subsection{Removing equivariance}\label{subsec:fixedpoints}

Now we are going to remove equivariance using the
inflation--fixed points adjunction.

Recall the functor $(-)^{\torus}$ of
\cite[Section 3]{mm02}. It takes
a spectrum indexed on a complete $\torus$--universe $\ucal$ to
the $\torus$--trivial universe $\ucal^{\torus}$ and then applies
the space--level fixed point functor levelwise.
We begin by extending this functor to categories of modules
over $\torus$-equivariant ring spectra.

If $A$ is a commutative ring spectrum in $\torus$--equivariant spectra then $A^{\torus}$ is a commutative ring object in spectra.
We want to compare $A$-modules in $\torus$--equivariant spectra and
$A^\torus$-modules in spectra.
Using \cite[Section 4]{gsfixed} there is a Quillen adjunction
\[
\xymatrix{
A \smashprod_{\varepsilon^* A^{\torus}} \varepsilon^* (-):
A^{\torus} \leftmod
\ar@<+1ex>[r] &
A \leftmod
: (-)^{\torus}. \ar@<+0.5ex>[l]}
\]
between right transferred model structures (fibrations
and weak equivalences are defined in terms of the underlying categories).
To simplify the notation, if $a \co \varepsilon^* A^{\torus} \to A$ is the 
inclusion of fixed points, we  write 
$$a_{\#} = A \smashprod_{\varepsilon^* A^{\torus}} \varepsilon^* (-)$$
for  the left adjoint.

We consider several cases of this kind of adjunction and use them
to build up an adjunction between $S^\bullet \leftmod$
and a new diagram of model categories  $S^\bullet_{\Top} \leftmod$.
We then show that this adjunction gives a Quillen equivalence,
after cellularising.

\begin{proposition} For $a \co \varepsilon^* DE\cF_+^{\torus} \to DE\cF_+$ the inclusion of fixed points, 
the adjunction
\[
\xymatrix@C=4pc{
{a_{\#}}: L_{S_\bQ}(DE\cF_+^{\torus} -\mathrm{mod})\  \ar@<+1ex>[r] & \ L_{S_\bQ}(DE\cF_+ -\mathrm{mod}): (-)^{\torus} \ar@<+0.5ex>[l]
}
\]
is a symmetric monoidal Quillen equivalence.
\end{proposition}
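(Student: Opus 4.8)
The statement is essentially a formal consequence of the general theory set up in the preceding subsection together with two black-box inputs: the fixed-point Quillen adjunction of \cite[Section 4]{gsfixed}, and the identification of the Spanier--Whitehead dual. I would proceed in four steps. First, I would recall that $DE\cF_+$ is a commutative ring spectrum in $\TSP$, fibrant in the positive stable model structure (as noted after Definition \ref{def:universalspaces}), so $DE\cF_+^{\torus}$ is a commutative ring spectrum in (non-equivariant) spectra; moreover $DE\cF_+^{\torus} \simeq D\B\torus_+$, the Spanier--Whitehead dual of $\B\torus_+ = (E\torus_+)^{\torus}$, since geometric fixed points commute with the relevant smashes and $\cF$-localization and since $E\cF_+ \to E\torus_+$ induces an equivalence on $\torus$-fixed points. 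Second, I would invoke the Quillen adjunction
\[
\xymatrix@C=3pc{
a_{\#} = DE\cF_+ \smashprod_{\varepsilon^* DE\cF_+^{\torus}} \varepsilon^*(-) : DE\cF_+^{\torus}\leftmod \ar@<+1ex>[r] & DE\cF_+\leftmod : (-)^{\torus} \ar@<+0.5ex>[l]
}
\]
directly from \cite[Section 4]{gsfixed}, between the right-transferred (underlying) model structures. This adjunction is strong symmetric monoidal on the left since $a_{\#}$ is extension of scalars along a map of commutative ring spectra; that it is a Quillen equivalence is the content of the primary example flagged in the opening of Subsection \ref{subsec:fixedpoints}, and I would cite \cite[Section 4]{gsfixed} for it.

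**Passing to the rationalizations.** The third step is to check that localizing both sides at $S_\bQ$ preserves both the Quillen pair and the Quillen equivalence. Since rationalization is a smashing localization (the rational sphere is a smashing localization of the sphere), the rational model structure on each module category is obtained by localizing at $S_\bQ$ and the left derived functor of $a_{\#}$ carries rational equivalences to rational equivalences — because $a_{\#}$, being extension of scalars, commutes with smashing by $S_\bQ$ up to natural isomorphism; dually $(-)^{\torus}$ preserves rational equivalences of fibrant objects since the underlying homotopy groups of $X^{\torus}$ are computed as homotopy groups of a fixed-point spectrum and rationalization commutes with these. Concretely, I would use that a Quillen equivalence between two stable monoidal model categories descends to a Quillen equivalence between their $E$-localizations whenever $E$ is smashing and the left adjoint is $E$-local, which applies here with $E = S_\bQ$. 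The monoidality is inherited: $a_{\#}$ remains strong symmetric monoidal after localization because the localization functor is itself (lax) monoidal and the unit map is unchanged.

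**The main obstacle.** The genuinely non-formal point — and the one I would spend the most care on — is the Quillen equivalence part, i.e. that the derived unit $M \to (a_{\#}M)^{\torus}$ and the derived counit $a_{\#}(X^{\torus}) \to X$ are rational equivalences for $M$ a $DE\cF_+^{\torus}$-module and $X$ a $DE\cF_+$-module. For the counit one uses that $X$ is built from cells of the form $DE\cF_+ \smashprod \torus/H_+$ and that $(\torus/H_+)^{\torus}$ together with the reassembly $a_{\#}$ recovers $\torus/H_+$ after smashing with $DE\cF_+$; since $DE\cF_+$ is concentrated at the family $\cF$ of finite subgroups, the only cells that matter are those with $H$ finite, and for those the comparison is a consequence of the tom Dieck splitting / the fact that $DE\cF_+\smashprod\torus/H_+ \simeq DE\cF_+\smashprod\varepsilon^*( (\torus/H_+)^{\torus})$ rationally. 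For the unit one argues on a set of generators of $DE\cF_+^{\torus}\leftmod$. All of this is exactly the argument of \cite[Section 4]{gsfixed} (and its use in \cite{tnqcore, kedziorekthesis, KedziorekSO(3)}), so in the write-up I would state the three checks — (i) the fixed-point adjunction of \cite{gsfixed} is a symmetric monoidal Quillen equivalence before rationalization, (ii) $S_\bQ$-localization is smashing and the left adjoint is $S_\bQ$-local, (iii) therefore the localized adjunction is again a symmetric monoidal Quillen equivalence — and refer to \cite[Section 4]{gsfixed} and Lemma \ref{lemma:QEondiag}-style bookkeeping for the details, rather than reproving the equivalence from scratch.
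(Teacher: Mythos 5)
Your overall architecture (the Quillen adjunction of \cite[Section 4]{gsfixed}, plus strong symmetric monoidality of extension of scalars along a map of commutative ring spectra) matches the paper's, but the route you take to the Quillen \emph{equivalence} has a genuine gap. You propose first to establish that the unrationalized adjunction $a_{\#} \colon DE\cF_+^{\torus}\leftmod \adjunct DE\cF_+\leftmod \colon (-)^{\torus}$ is a Quillen equivalence, citing Section 4 of \cite{gsfixed} and the ``primary example'' from the opening of Subsection \ref{subsec:fixedpoints}, and then to descend through $S_\bQ$-localization. But Section 4 of \cite{gsfixed} only supplies the Quillen \emph{adjunction}, not the equivalence; and the ``primary example'' concerns $DE\torus_+$-modules (free spectra, i.e.\ the family consisting of the trivial subgroup alone), not $DE\cF_+$-modules for the family of all finite subgroups --- and even that example is stated only for the \emph{rational} categories. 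For $DE\cF_+$ there is no integral equivalence available to localize: the identification of $\pi_*(DE\cF_+^{\torus})\otimes\qq$ with $\ocal_\fcal$ and the idempotent splittings that make the unit and counit checks work are rational phenomena. The paper's proof instead cites \cite[Corollary 8.1 and Corollary 9.2]{gsfixed}, which establish the Quillen equivalence of the rationalized module categories directly, so that your steps (ii) and (iii) are not needed and your step (i) is not a true statement one could start from.

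Your localization formalism in the third step is fine as machinery (the paper uses \cite[Theorem 3.3.20]{hir03} for exactly this kind of descent in the very next proposition), but it cannot substitute for the missing input. Likewise, your ``main obstacle'' paragraph gestures at the right kind of cell-by-cell verification, but the key assertion there --- that $DE\cF_+\smashprod\torus/H_+ \simeq DE\cF_+\smashprod\varepsilon^*((\torus/H_+)^{\torus})$ rationally --- is precisely the nontrivial content of \cite[Corollaries 8.1 and 9.2]{gsfixed} (and as written it glosses over the tom Dieck splitting of the Lewis--May fixed points of $\Sigma^\infty\torus/H_+$), so it cannot be deferred to Section 4 of that paper. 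The repair is simple: cite \cite[Corollary 8.1 and Corollary 9.2]{gsfixed} for the rational Quillen equivalence, and then conclude monoidality as you do from the fact that $a_{\#}$ is strong symmetric monoidal.
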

\begin{proof}
We have a Quillen equivalence by \cite[Corollary 8.1 and Corollary 9.2]{gsfixed}. The left adjoint is strong symmetric monoidal
so the result follows.
\end{proof}

We now left Bousfield localize the model categories in this adjunction. We localize the right hand side at the set of maps
$\Sigma^* f$, where $f: DE\cF_+ \lra DE\cF_+\wedge \widetilde{E}\cF$.
Let $(\Sigma^* f)^{\torus}$ be the set of maps obtained by applying the derived right adjoint to the maps in $\Sigma^* f$.
By \cite[Theorem 3.3.20, part 1b]{hir03} we
obtain the following result.
\begin{proposition}The adjunction
\[
\xymatrix@C=4pc{
{a_{\#}}: L_{(\Sigma^* f)^{\torus}}L_{S_\bQ}(DE\cF_+^{\torus} -\mathrm{mod})\  \ar@<+1ex>[r] & \ L_{\Sigma^* f}L_{S_\bQ}(DE\cF_+ -\mathrm{mod}): (-)^{\torus} \ar@<+0.5ex>[l]
}
\]
is a symmetric monoidal Quillen equivalence.
\end{proposition}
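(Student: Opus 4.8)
The plan is to deduce this from the symmetric monoidal Quillen equivalence of the previous proposition by a standard left Bousfield localization argument, together with a check that the monoidal structures survive the localization. First I would record that the previous proposition gives a symmetric monoidal Quillen equivalence $(a_\#,(-)^\torus)$ between $L_{S_\bQ}(DE\cF_+^\torus\leftmod)$ and $L_{S_\bQ}(DE\cF_+\leftmod)$, with $a_\#$ strong symmetric monoidal and $(-)^\torus$ lax symmetric monoidal; both model categories are left proper and cellular, being left Bousfield localizations of module categories over ring spectra in suitable left proper cellular categories of orthogonal spectra \cite{mm02}, so that by \cite[Theorem~4.1.1]{hir03} they admit further left Bousfield localizations. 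Since $\Sigma^* f$ is a set of maps in the right-hand category and $(\Sigma^* f)^\torus$ --- the image of $\Sigma^* f$ under the derived right adjoint of $a_\#$ --- is a set of maps in the left-hand one, applying \cite[Theorem~3.3.20, part~1b]{hir03} to $(a_\#,(-)^\torus)$ with these two sets yields that the adjunction descends to a Quillen equivalence between $L_{(\Sigma^* f)^\torus}L_{S_\bQ}(DE\cF_+^\torus\leftmod)$ and $L_{\Sigma^* f}L_{S_\bQ}(DE\cF_+\leftmod)$.

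It then remains to upgrade this to a \emph{symmetric monoidal} Quillen equivalence, and the only non-formal point is that both localized categories are symmetric monoidal model categories. For the right-hand side this is already in hand: by the remark preceding the statement, $L_{\Sigma^* f}L_{S_\bQ}(DE\cF_+\leftmod)$ coincides with the $\widetilde{E}\cF$-localization $L_{S_\bQ\smashprod\widetilde{E}\cF}(DE\cF_+\leftmod)$, which is smashing (obtained by smashing with $S^0\to\widetilde{E}\cF$) and hence symmetric monoidal --- indeed it appears as a vertex of the diagram $S^\bullet$, already noted to consist of monoidal model categories; compare also \cite{brstable}. For the left-hand side I would transport the monoidal model structure along the strong symmetric monoidal Quillen equivalence $a_\#$, as in \cite{kedziorekthesis, barnesthesis}: $L_{(\Sigma^* f)^\torus}L_{S_\bQ}(DE\cF_+^\torus\leftmod)$ then acquires a symmetric monoidal model structure for which $a_\#$ is still strong symmetric monoidal and $(-)^\torus$ still lax symmetric monoidal, the unit condition being inherited from the unlocalized case because $a_\#$ is strong monoidal. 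Together these give the claimed symmetric monoidal Quillen equivalence.

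The hard part is therefore not the Quillen equivalence itself --- that is a direct application of the cited localization machinery --- but the monoidal bookkeeping: \cite[Theorem~3.3.20]{hir03} only produces the underlying Quillen equivalence, so one must independently verify that left Bousfield localization has not destroyed the monoidal model structure. This is arranged by recognising the right-hand localization as the \emph{smashing} $\widetilde{E}\cF$-localization --- so its localizing subcategory is a tensor ideal and the pushout-product and unit axioms descend --- and then carrying that monoidal model structure across to the left-hand side along the strong monoidal Quillen equivalence.
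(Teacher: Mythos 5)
Your proposal matches the paper's argument: the paper obtains this proposition exactly by applying \cite[Theorem 3.3.20, part 1b]{hir03} to the symmetric monoidal Quillen equivalence of the preceding proposition, localizing the right-hand side at $\Sigma^* f$ and the left-hand side at the derived image $(\Sigma^* f)^{\torus}$. Your additional discussion of why the localized categories remain symmetric monoidal (the right-hand localization being the smashing $\widetilde{E}\fcal$-localization, and transporting along the strong monoidal left adjoint) is a reasonable elaboration of bookkeeping the paper leaves implicit, but the core route is the same.
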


Our final version is where we take $A$ to be the sphere spectrum,
so the left adjoint is just $\varepsilon^*$.
By \cite[Section V, Proposition 3.10]{mm02} the adjunction
\[
\xymatrix@C=4pc{
{\varepsilon^*}: {\Sp^\ocal}\  \ar@<+1ex>[r] & \ \torus \Sp^\ocal: (-)^{\torus} \ar@<+0.5ex>[l]
}
\]
is a symmetric monoidal Quillen adjunction. We localize it to obtain a Quillen equivalence

\begin{proposition}The adjunction
\[
\xymatrix@C=4pc{
{\varepsilon^*}: L_{S_\bQ}(\Sp^\ocal)\  \ar@<+1ex>[r] & \ L_{S_\bQ \wedge \widetilde{E}\cF}(\TSP): (-)^{\torus} \ar@<+0.5ex>[l]
}
\]
is a symmetric monoidal Quillen equivalence.
\end{proposition}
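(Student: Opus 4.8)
The plan is to build on the unlocalised symmetric monoidal Quillen adjunction $\varepsilon^* \dashv (-)^{\torus}$ of \cite[Section V, Proposition 3.10]{mm02} in two stages: first check that it survives localisation as a Quillen adjunction, and then show that localisation has \emph{improved} it to a Quillen equivalence, the extra input being the theory of geometric $\torus$--fixed points $\Phi^{\torus}$. For the first stage I would argue as in the two preceding propositions, via \cite[Theorem 3.3.20]{hir03}: the left--hand localisation is at the rationalisation $\sphspec \to S_\bQ$ of the sphere, and $\varepsilon^*$ carries this to the rationalisation of the $\torus$--sphere, which is a $(S_\bQ \wedge \widetilde{E}\cF)$--equivalence; conversely $(-)^{\torus}$ carries each fibrant object of $L_{S_\bQ \wedge \widetilde{E}\cF}\TSP$, being a rational $\torus$--spectrum, to a rational, hence $S_\bQ$--local, spectrum. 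So the displayed pair is a Quillen adjunction; since $\varepsilon^*$ is strong symmetric monoidal and both localisations are smashing (the left at the unit, the right at rationalisation together with the smashing idempotent $\widetilde{E}\cF$), it is a strong symmetric monoidal Quillen adjunction, and it remains only to see it is a Quillen equivalence.

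For the second stage I would compute the derived unit and counit, the key input being three standard facts about $\Phi^{\torus}$ for $\torus = SO(2)$, whose only proper closed subgroups are the finite ones: $\Phi^{\torus}W \simeq (\widetilde{E}\cF \wedge W)^{\torus}$, so $\Phi^{\torus}$ and $(-)^{\torus}$ agree on $\widetilde{E}\cF$--local objects; $\Phi^{\torus}$ is strong monoidal with $\Phi^{\torus}\widetilde{E}\cF \simeq \sphspec$; and $\Phi^{\torus}\varepsilon^* \simeq \id$. Since both localisations are smashing, for a cofibrant $Z$ on the left the fibrant replacement of $\varepsilon^* Z$ in $L_{S_\bQ \wedge \widetilde{E}\cF}\TSP$ is $S_\bQ \wedge \widetilde{E}\cF \wedge \varepsilon^* Z$, and so the derived unit is identified with $Z \to (S_\bQ \wedge \widetilde{E}\cF \wedge \varepsilon^* Z)^{\torus} \simeq \Phi^{\torus}\varepsilon^*(S_\bQ \wedge Z) \simeq S_\bQ \wedge Z$, the rationalisation, which is a weak equivalence in $L_{S_\bQ}\Sp^\ocal$. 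For a fibrant $Y$ on the right (so $Y$ is rational and $\widetilde{E}\cF$--local, $Y \simeq \widetilde{E}\cF \wedge Y$) the derived counit is the map $\varepsilon^*(Y^{\torus}) \to Y$; smashing with $\widetilde{E}\cF$ and applying $\Phi^{\torus}$ sends the source to $\Phi^{\torus}\widetilde{E}\cF \wedge \Phi^{\torus}\varepsilon^* Y^{\torus} \simeq Y^{\torus}$ and the target to $\Phi^{\torus}Y \simeq Y^{\torus}$, so it is a $\Phi^{\torus}$--equivalence between $\widetilde{E}\cF$--local spectra.

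The main obstacle, and the final step, is then to show that a $\Phi^{\torus}$--equivalence between $\widetilde{E}\cF$--local $\torus$--spectra is already a weak equivalence, i.e.\ that $\Phi^{\torus}$ is conservative on $\widetilde{E}\cF$--local spectra. I would deduce this from the homotopy--group analysis of such objects: restricting $E\cF$ to a finite subgroup $H$ gives an $H$--contractible space, so for $\widetilde{E}\cF$--local $W$ the vanishing $E\cF_+ \wedge W \simeq *$ forces $\pi^H_*(W) = 0$ for all finite $H$, while $\pi^{\torus}_*(W) \cong \pi_*(\Phi^{\torus}W)$ since $W \simeq \widetilde{E}\cF \wedge W$; hence a $\Phi^{\torus}$--equivalence of $\widetilde{E}\cF$--local spectra is a $\pi^H_*$--isomorphism for every closed $H$. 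Everything else — the smashing identifications of the localised fibrant replacements, strong monoidality of $\varepsilon^*$, and the bookkeeping that the maps above really represent the derived unit and counit under these identifications — is routine, and the symmetric monoidal upgrade is automatic because $\varepsilon^*$ is strong symmetric monoidal and $\varepsilon^*\sphspec$ is the unit.
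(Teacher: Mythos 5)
Your proposal is correct and follows essentially the same route as the paper: first localize the adjunction of \cite[Section V, Proposition 3.10]{mm02}, then use the fact that the derived functor of $(-)^{\torus}$ acts as geometric fixed points $\Phi^{\torus}$ on $\widetilde{E}\cF$--local objects together with $\Phi^{\torus}\varepsilon^* \simeq \id$. The only difference is one of packaging: the paper reduces the unit/counit check to the generators $S^0$ and $(\torus/C_n)_+$ (the latter being trivial after smashing with $\widetilde{E}\cF$), whereas you verify the counit on all fibrant objects by proving that $\Phi^{\torus}$ is conservative on $\widetilde{E}\cF$--local spectra --- the same computation in a slightly more explicit form.
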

\begin{proof}Since $\varepsilon^*$ is strong monoidal and $\varepsilon^*(S_\bQ)=S_\bQ$ the above adjuncton is a composite of two adjunctions, the second being identity adjunction between $L_{S_\bQ}(\TSP)$ and further localization at $\widetilde{E}\cF$, namely $L_{S_\bQ \wedge \widetilde{E}\cF}(\TSP)$.

To verify that this is a Quillen equivalence we will work with the derived unit and the derived counit on generators. The generator for the left hand side is $S^0$.
The generators for the right hand side are
$S^0=\torus /\torus _+$  and $(\torus /C_{n})_+$ for $n \geqslant 1$.
But $(\torus /C_{n})_+$ is weakly equivalent to a point in
$L_{S_\bQ \wedge \widetilde{E}\cF}(\TSP)$
(that is, $(\torus /C_{n})_+ \smashprod \widetilde{E}\cF \simeq \ast$).
So we only need to consider $S^0$ for the right hand side.

The derived functor of $(-)^\torus$ acts as the geometric
$\torus$-fixed point functor, because
$\phi^N(X)=(X\wedge \widetilde{E}[\not \supseteq N])^N$.
With this in mind, it is routine to check that
the derived unit and counit are weak equivalences
on the generators.
It follows that this adjunction is a Quillen equivalence.
\end{proof}

We can extend the functor $(-)^{\torus}$ to the level of generalized diagrams.

\begin{definition}
We define $S^\bullet_{\Top} $ to be the following diagram of model categories and adjoint Quillen pairs:
\[
\xymatrix@C=4pc{
 L_{S_\bQ}(DE\cF_+^{\torus} -\mathrm{mod})\
\ar@<+1ex>[r]^-{\mathrm{Id}}
&
\ L_{\{(\Sigma^* f)^{\torus}\}}L_{S_\bQ}(DE\cF_+^{\torus} -\mathrm{mod})
\ar@<+0.5ex>[l]^-{\mathrm{Id}}
\ar@<-1ex>[r]_-{U}
&
\ L_{S_\bQ}(\TSP)
\ar@<-0.5ex>[l]_-{DE\cF_+^{\torus} \wedge-}
}
\]
where $U$ denotes the forgetful functor.
\end{definition}

Now we consider the category $S^\bullet_{\Top} \leftmod$ of generalized diagrams. By construction,
it follows that the functor $(-)^\torus$
induces a functor between $S^\bullet \leftmod$ and
$S^\bullet_{\Top} \leftmod$. For simplicity, we call the left adjoint $a_{\#}$.
Since each of the components is
a symmetric monoidal Quillen equivalence, we obtain the following from Lemma \ref{lemma:QEondiag}.

\begin{theorem}\label{thm:removeequivariance}The adjunction
\[
\xymatrix@C=4pc{
{a_{\#}}: S^\bullet_{\Top} \leftmod\  \ar@<+1ex>[r] & \ S^\bullet \leftmod: (-)^{\torus} \ar@<+0.5ex>[l]
}
\]
 is a symmetric monoidal Quillen equivalence.
\end{theorem}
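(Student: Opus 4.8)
The plan is to assemble the adjunction $(a_{\#}, (-)^{\torus})$ on the level of $\pscr$-diagrams from the three vertex-wise adjunctions established in the three preceding propositions, and then invoke Lemma \ref{lemma:QEondiag} together with the monoidal refinement of that lemma. First I would record that $S^\bullet$ and $S^\bullet_{\Top}$ are $\pscr$-diagrams of cellular proper monoidal model categories whose structure adjunctions ($\Id$ on the left leg, and $DE\cF_+ \smashprod -$ resp. $DE\cF_+^{\torus} \smashprod -$ on the right leg) are strong symmetric monoidal Quillen pairs; this is already noted where those diagrams are defined. Then I would name the three vertex-wise adjunctions: $P_1 = a_{\#}$ between $L_{S_\bQ}(DE\cF_+^{\torus}\leftmod)$ and $L_{S_\bQ}(DE\cF_+\leftmod)$; $P_2 = a_{\#}$ between the localizations $L_{(\Sigma^* f)^{\torus}}L_{S_\bQ}(DE\cF_+^{\torus}\leftmod)$ and $L_{\Sigma^* f}L_{S_\bQ}(DE\cF_+\leftmod)$; and $P_3 = \varepsilon^*$ between $L_{S_\bQ}(\Sp^\ocal)$ and $L_{S_\bQ\smashprod\widetilde{E}\cF}(\TSP)$. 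Each of these was shown to be a symmetric monoidal Quillen equivalence in the three propositions immediately above.

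The heart of the argument is checking the compatibility hypotheses of Lemma \ref{lemma:QEondiag}, i.e. that the left adjoints commute with the legs of the diagrams up to natural isomorphism. For the left leg this is immediate: on both sides the structure map is the identity functor, and $P_1$, $P_2$ agree (they are the same functor $a_{\#}$, only the model structures differ), so $P_2 \circ \Id \cong \Id \circ P_1$ trivially. For the right leg we must compare $P_2$ applied after the forgetful functor $U$ with $U$ applied after $P_3 = \varepsilon^*$, and dually on the other side we must compare $DE\cF_+ \smashprod \varepsilon^*(-)$ with $a_{\#}(DE\cF_+^{\torus}\smashprod -)$. This is exactly the statement that forming a module spectrum and then inflating agrees with inflating and then extending scalars along $\varepsilon^* DE\cF_+^{\torus} \to DE\cF_+$; since $\varepsilon^*$ is strong symmetric monoidal and $\varepsilon^*(DE\cF_+^{\torus})$ maps canonically to $DE\cF_+$, one has $DE\cF_+ \smashprod_{\varepsilon^* DE\cF_+^{\torus}} \varepsilon^*(M) \cong DE\cF_+ \smashprod \varepsilon^*(M)$ for $M$ a $DE\cF_+^{\torus}$-module, and this is natural. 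I expect this naturality check to be the only genuinely non-formal point, and it is routine: it amounts to unwinding the definition of $a_{\#}$ given just before Theorem \ref{thm:removeequivariance} and using that $\varepsilon^*$ preserves the relevant smash products.

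With the compatibility in hand, Lemma \ref{lemma:QEondiag} yields that $(a_{\#}, (-)^{\torus})$ is a Quillen equivalence between $S^\bullet_{\Top} \leftmod$ and $S^\bullet \leftmod$. For the monoidal upgrade I would invoke the observation made in the subsection on diagrams of model categories: if each vertex-wise adjunction $(P_i, Q_i)$ is a symmetric monoidal Quillen equivalence and the left adjoints are strong monoidal, then the induced map $(P, Q)$ of $\pscr$-diagrams is a symmetric monoidal Quillen equivalence. All three $P_i$ are strong symmetric monoidal (being inflation or extension of scalars along a map of commutative ring spectra), so this applies directly and gives the symmetric monoidal conclusion. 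The only mild subtlety to mention is that the localizations appearing in $S^\bullet_{\Top}$ are at the images $(\Sigma^* f)^{\torus}$ of the localizing sets under the derived right adjoint, which is precisely what is needed for $P_2$ to remain a Quillen pair after localization (this was arranged via \cite[Theorem 3.3.20]{hir03} in the preceding proposition); no further work is required here.
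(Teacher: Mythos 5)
Your proposal is correct and follows essentially the same route as the paper: the paper also deduces the theorem by observing that the three vertex-wise adjunctions are symmetric monoidal Quillen equivalences and then applying Lemma \ref{lemma:QEondiag} together with the monoidal remark following it. In fact you supply more detail than the paper does on the compatibility of the left adjoints with the legs of the diagrams (the paper dismisses this with ``by construction''), and your verification of that point is accurate.
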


We now extend this Quillen equivalence to a cellularized version.
Define $K_{\Top}^{\torus}$ to be the set of cells given by
applying the derived functor of $(-)^{\torus}$ to
$K_{\Top}$. By the Cellularization Principle of the
 \cite[Proposition 2.7]{gscell},
we see that the Quillen equivalence
above is preserved by cellularization.

\begin{corollary}\label{cor:removeequivariancecell}
The adjunction below is a Quillen equivalence.
\[\xymatrix@R+0.5cm@C=4pc{
{a_{\#}}\ :\ K_{\Top}^{\torus} \cell S^\bullet_{\Top} \leftmod
\ar@<+1ex>[r]
&
K_{\Top} \cell S^\bullet \leftmod \ :\ {(-)^{\torus}}
\ar@<+0.5ex>[l]
}\]
\end{corollary}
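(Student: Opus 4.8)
The plan is to invoke the Cellularization Principle of \cite[Proposition 2.7]{gscell} in the form that promotes a Quillen equivalence of underlying model categories to a Quillen equivalence of their cellularizations at compatible sets of cells. We already have, from Theorem \ref{thm:removeequivariance}, a (symmetric monoidal) Quillen equivalence $a_{\#} : S^\bullet_{\Top} \leftmod \adjunct S^\bullet \leftmod : (-)^{\torus}$. By construction $K_{\Top}^{\torus}$ is defined to be the image of the set $K_{\Top}$ under the derived right adjoint of this equivalence. The version of the Cellularization Principle we want says exactly this: given a Quillen equivalence $(F,G)$ between right-proper, cellular (more precisely, the hypotheses needed to form the right Bousfield localizations) model categories $\ccal \adjunct \dcal$, and a set $\kcal$ of homotopically small objects of $\dcal$, the induced adjunction $\kcal' \cell \ccal \adjunct \kcal \cell \dcal$ is a Quillen equivalence, where $\kcal'$ is a set of cofibrant objects of $\ccal$ whose images under the derived functor of $F$ are weakly equivalent to the objects of $\kcal$. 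Here $\dcal = K_{\Top} \cell S^\bullet \leftmod$'s ambient category $S^\bullet \leftmod$, $\ccal = S^\bullet_{\Top} \leftmod$, $F = a_{\#}$, $G = (-)^{\torus}$, and $\kcal = K_{\Top}$, $\kcal' = K_{\Top}^{\torus}$.

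The steps I would carry out are: (1) Record that both $S^\bullet \leftmod$ and $S^\bullet_{\Top} \leftmod$ are cellular and right proper, so that the relevant right Bousfield localizations (cellularizations) exist --- this follows from \cite[Proposition 3.3]{gsmodules} applied to the two $\pscr$--diagrams, since all the vertices (localizations of $DE\cF_+\leftmod$, of $DE\cF_+^{\torus}\leftmod$, and of $\TSP$ or $\Sp^\ocal$) are cellular and proper, and left Bousfield localization preserves these properties by \cite{hir03}. (2) Check the smallness hypothesis: the Cellularization Principle requires the cells in $K_{\Top}$ to be homotopically small in $S^\bullet \leftmod$, which has already been established in the discussion preceding Proposition \ref{prop:Smodulesequivalence}; and it requires the objects of $K_{\Top}^{\torus}$ to be homotopically small in $S^\bullet_{\Top} \leftmod$, which follows because the derived functor of $(-)^{\torus}$ preserves homotopically small objects along a Quillen equivalence (its left adjoint $a_{\#}$ preserves homotopy colimits and the underlying functors are well-behaved), or alternatively because $K_{\Top}^{\torus}$ can be described explicitly as the images under $S^\bullet_{\Top}\smashprod-$ of the fixed-point cells $(\Sigma^k S^0)$ and $(\Sigma^k\sigma_n)^{\torus}$, which are small for the same formal reasons as in the non-fixed case. (3) Apply \cite[Proposition 2.7]{gscell}: since $K_{\Top}^{\torus}$ is by definition a set of cofibrant objects mapping under the derived $a_{\#}$ to (objects weakly equivalent to) $K_{\Top}$, the hypotheses are met and the conclusion is immediate.

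The only point requiring genuine care --- and hence the main obstacle --- is verifying that the cellularization hypotheses of the Cellularization Principle are met on \emph{both} sides, in particular the homotopical smallness of the cells in $K_{\Top}^{\torus}$ inside the diagram category $S^\bullet_{\Top} \leftmod$. This is slightly delicate because smallness in a diagram-of-model-categories does not come for free from smallness of the vertex-wise pieces; one must use that the objects in question are built (as in the homotopy pushout square preceding Proposition \ref{prop:Smodulesequivalence}) from vertex-wise small cofibrant objects via finite homotopy colimits, which are preserved by $a_\#$ and detected by the forgetful functors to the vertices. Once this is in hand, everything else is a direct citation of \cite[Proposition 2.7]{gscell} together with Theorem \ref{thm:removeequivariance}.
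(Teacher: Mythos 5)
Your proposal is correct and follows essentially the same route as the paper: the paper defines $K_{\Top}^{\torus}$ as the derived image of $K_{\Top}$ under $(-)^{\torus}$ and then cites the Cellularization Principle \cite[Proposition 2.7]{gscell} applied to the Quillen equivalence of Theorem \ref{thm:removeequivariance}. The supporting checks you flag (existence of the cellularizations and homotopical smallness of the cells, the latter being preserved by derived functors of Quillen equivalences) are exactly the points the paper relies on, having established smallness of $K_{\Top}$ before Proposition \ref{prop:Smodulesequivalence} and recorded the preservation statement in Section \ref{subsec:cellularization}.
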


As in the previous section, the above Quillen equivalence is symmetric monoidal, but for clarity we postpone the proof of that fact to Section \ref{sec:application}.

The model category $K_{\Top}^{\torus} \cell S^\bullet_{\Top} \leftmod$ is constructed from model categories
of non-equivariant spectra. Hence we have
removed the equivariance. The reward for doing so is
in the next section, where we can now replace
our categories based on spectra
with categories based on rational chain complexes.
Such categories are our first approximation to
the algebraic model.


\subsection{Passing to algebra}\label{subsec:toalgebra}

We will replace the model category
$K_{\Top}^{\torus} \cell S_{\Top}^\bullet \leftmod$
by a Quillen equivalent $\ch(\qq)$--model category.
The results of \cite{shiHZ}
and the general theory of diagrams of model categories
allow us to do so. To apply the work of \cite{shiHZ}, we must work with $\h \qq$--modules in symmetric spectra.
So we give two Quillen equivalences, the first moves us from orthogonal spectra
to symmetric spectra, the second from symmetric spectra
to $\h \qq$--modules.

In more detail, recall $\bU$, the forgetful functor from orthogonal spectra (in based topological spaces) to symmetric spectra (in based simplicial sets) and call $\bP$ its left adjoint.
Define $\bU S^\bullet_{\Top} $ to be the diagram of model categories
\[
\xymatrix@C=3pc @R=3pc{
 L_{S_\bQ}(\bU DE\cF_+^{\torus} -\mathrm{mod})\
\ar@<+1ex>[r]^-{\mathrm{Id}}
&
\ L_{\{ \bU (\Sigma^* f)^{\torus}\}}L_{S_\bQ}(\bU DE\cF_+^{\torus} -\mathrm{mod})
\ar@<+0.5ex>[l]^-{\mathrm{Id}}
\ar@<-1ex>[r]_-{U}
&
\ L_{S_\bQ}\mathrm{Sp}^{\Sigma}
\ar@<-0.5ex>[l]_-{\bU DE\cF_+^{\torus} \wedge-}
}.
\]
The functor $\bU$ preserves all weak equivalences, so we do not need to apply fibrant replacement when constructing the set $\bU (\Sigma^* f)^{\torus}$ and the
commutative ring spectrum $\bU DE\fcal_+^{\torus}$.

\begin{proposition}The adjunction
\[
\xymatrix{
{\bU}^{\bullet}: S^\bullet_{\Top}-\mathrm{mod}\  \ar@<-1ex>[r] & \ \bU S^\bullet_{\Top} -\mathrm{mod}: \bP^\bullet \ar@<-0.5ex>[l]
}
\]
is a strong symmetric monoidal Quillen equivalence.
\end{proposition}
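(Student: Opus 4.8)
The plan is to exhibit the displayed adjunction as the one induced, vertex by vertex, by the classical Quillen equivalence between orthogonal and symmetric spectra, and then to invoke Lemma~\ref{lemma:QEondiag} together with its monoidal refinement. The basic input is that the prolongation--forgetful pair
\[
\mathbb{P} : \mathrm{Sp}^{\Sigma} \adjunct \Sp^{\ocal} : \mathbb{U}
\]
(in which, on the left, the passage from based simplicial sets to based spaces is folded in) is a Quillen equivalence, with $\mathbb{P}$ strong symmetric monoidal and $\mathbb{U}$ lax symmetric monoidal, and $\mathbb{U}$ preserving \emph{all} weak equivalences, as already noted above. Since rationalisation is a smashing localisation compatible with both functors, this descends to a strong symmetric monoidal Quillen equivalence between $L_{S_\qq}\mathrm{Sp}^{\Sigma}$ and $L_{S_\qq}\Sp^{\ocal}$; this treats the third vertex of the two diagrams.

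For the module vertices, write $B = DE\fcal_+^{\torus}$, so that $\mathbb{U}B$ is a commutative ring in symmetric spectra. Because $\mathbb{P}$ is strong monoidal, the counit $\mathbb{P}\mathbb{U}B \to B$ is a ring map, and $N \mapsto B \smashprod_{\mathbb{P}\mathbb{U}B} \mathbb{P}N$ is a strong symmetric monoidal left adjoint to the functor $M \mapsto \mathbb{U}M$ from $B$--modules to $\mathbb{U}B$--modules. That $\mathbb{U}$ preserves all weak equivalences means that no cofibrant replacement of $B$ is needed and the customary fibrant-replacement subtleties evaporate, so the machinery for transporting weak monoidal Quillen equivalences to module categories --- as applied in \cite{shiHZ} and based on \cite{ss03monequiv} --- gives a strong symmetric monoidal Quillen equivalence between $\mathbb{U}B$--modules in $L_{S_\qq}\mathrm{Sp}^{\Sigma}$ and $B$--modules in $L_{S_\qq}\Sp^{\ocal}$. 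Passing through the left Bousfield localisation at the middle vertex then presents no difficulty: the derived counit identifies $\mathbb{L}\mathbb{P}(\mathbb{U}(\Sigma^* f)^{\torus})$ with $(\Sigma^* f)^{\torus}$ (using once more that $\mathbb{U}=\mathbb{R}\mathbb{U}$), so the two localising sets correspond under the comparison, and by the descent of Quillen equivalences to left Bousfield localisations \cite[Theorem~3.3.20]{hir03}, exactly as in the earlier localisation steps, we obtain the localised comparison as well.

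It remains to assemble these vertex-wise Quillen equivalences into a map of $\pscr$--diagrams, for which one must check that the left adjoints intertwine the structure maps up to natural isomorphism. The comparisons with the two identity structure maps are immediate, while for the remaining ones one needs $B \smashprod_{\mathbb{P}\mathbb{U}B} \mathbb{P}(\mathbb{U}B \smashprod Y) \cong B \smashprod \mathbb{P}Y$, which is forced by strong monoidality of $\mathbb{P}$. With this in place, Lemma~\ref{lemma:QEondiag} promotes the vertex-wise equivalences to a Quillen equivalence between $S^\bullet_{\Top}\leftmod$ and $\mathbb{U}S^\bullet_{\Top}\leftmod$, and the monoidal refinement of that lemma shows it to be a strong symmetric monoidal one --- the strong monoidal functor being the left adjoint, built vertex-wise from $\mathbb{P}$.

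The only step demanding real care is the module-category comparison over $B$, which is fibrant but not cofibrant: ordinarily transporting a weak monoidal Quillen equivalence to modules requires cofibrancy of the ring (or of the monoidal unit), and this hypothesis is precisely what is sidestepped by constructing $\mathbb{U}S^\bullet_{\Top}$ from $\mathbb{U}B$ with no fibrant or cofibrant replacement --- legitimate exactly because $\mathbb{U}$ preserves all weak equivalences. Everything else, namely the rationalisation, the localisation bookkeeping, and the intertwining of the structure maps, is routine.
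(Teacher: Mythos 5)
Your proof is correct and follows essentially the same route as the paper, which simply notes that $(\bP,\bU)$ is a strong symmetric monoidal Quillen equivalence between $L_{S_\qq}\mathrm{Sp}^{\Sigma}$ and $L_{S_\qq}\Sp^{\ocal}$ and then invokes Lemma~\ref{lemma:QEondiag}. You merely spell out the vertex-wise module comparisons, the localisation bookkeeping, and the intertwining isomorphisms that the paper leaves implicit (including the role of $\bU$ preserving all weak equivalences, which the paper records in the paragraph preceding the proposition).
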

\begin{proof}
The adjunction $(\bP, \bU)$  is a Quillen equivalence between $L_{S_\bQ}\Sp^\ocal$  and $L_{S_{\bQ}} \mathrm{Sp}^{\Sigma}$. Furthermore the left adjoint is strong symmetric monoidal, so the result follows by Lemma \ref{lemma:QEondiag}.
\end{proof}

The second step is to pass from symmetric spectra to $H\bQ$ modules using the adjunction $(\h \qq \smashprod -, U)$. This is a Quillen equivalence between $L_{S_\bQ}\mathrm{Sp}^\Sigma$ and $\h \qq \leftmod$ and the left adjoint is strong symmetric monoidal. Thus by the same argument as above we get the following.

  \begin{proposition}The adjunction
 \[
\xymatrix{
{H\bQ\wedge -}^{\bullet}: \bU S^\bullet_{\Top}-\mathrm{mod}\  \ar@<+1ex>[r] & \ H\bQ \wedge \bU S^\bullet_{\Top} -\mathrm{mod}: \mathrm{U^\bullet} \ar@<+0.5ex>[l]
}
\]
  is a strong symmetric monoidal Quillen equivalence, where $H\bQ \wedge \bU S^\bullet_{\Top} $ denotes the following diagram of model categories:

   \[
\xymatrix@C=2pc @R=3pc{
 H\bQ \wedge \bU DE\cF_+^{\torus} -\mathrm{mod}\
\ar@<+1ex>[r]^-{\mathrm{Id}}
&
\ L_{\{H\bQ \wedge \bU (\Sigma^* f)^{\torus}\}}(H\bQ \wedge \bU DE\cF_+^{\torus} -\mathrm{mod})
\ar@<+0.5ex>[l]^-{\mathrm{Id}}
\ar@<-1ex>[r]_-{U}
&
\ H\bQ \leftmod
\ar@<-0.5ex>[l]_-{\bU DE\cF_+^{\torus} \wedge-}
}
\]
Where all $H\bQ \wedge \bU DE\cF_+^{\torus}$ denotes first the cofibrant replacement in the model category of commutative ring spectra and then application of $H\bQ \wedge -$.
\end{proposition}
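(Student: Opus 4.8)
The plan is to follow the pattern of the two preceding propositions: establish the Quillen equivalence at each vertex of the diagram, check the compatibilities needed to compare the two $\pscr$-diagrams, and then invoke Lemma \ref{lemma:QEondiag}. The starting point is the classical fact, used throughout \cite{shiHZ} and recalled just before the statement, that smashing with $H\bQ$ is a strong symmetric monoidal Quillen equivalence $(H\bQ \wedge -, U)$ between $L_{S_\bQ}\mathrm{Sp}^\Sigma$ and $H\bQ \leftmod$; this handles the right-hand vertex directly.

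For the two left-hand vertices I would pass to module categories. Since $H\bQ \wedge -$ is strong symmetric monoidal, it carries the commutative ring spectrum $\bU DE\fcal_+^\torus$ (cofibrantly replaced in commutative ring spectra, as the statement prescribes) to the commutative $H\bQ$-algebra $H\bQ \wedge \bU DE\fcal_+^\torus$ appearing in the target diagram. The monoid-and-module transfer machinery of \cite[Theorem 3.12]{ss03monequiv} then upgrades the vertex-wise Quillen equivalence $L_{S_\bQ}\mathrm{Sp}^\Sigma \simeq H\bQ \leftmod$ to a Quillen equivalence between $\bU DE\fcal_+^\torus$-modules in $L_{S_\bQ}\mathrm{Sp}^\Sigma$ and $H\bQ \wedge \bU DE\fcal_+^\torus$-modules in $H\bQ \leftmod$; this is precisely why the statement first cofibrantly replaces $\bU DE\fcal_+^\torus$. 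For the middle vertex one additionally left Bousfield localizes, and localizing the source at $\bU (\Sigma^* f)^\torus$ and the target at its image $H\bQ \wedge \bU (\Sigma^* f)^\torus$ preserves the Quillen equivalence by \cite[Theorem 3.3.20]{hir03}, exactly as in the proofs above. This produces three strong symmetric monoidal Quillen equivalences $(P_1,Q_1)$, $(P_2,Q_2)$, $(P_3,Q_3)$ of the vertices.

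It then remains to verify the two naturality squares in the hypotheses of Lemma \ref{lemma:QEondiag}, namely that $H\bQ \wedge -$ commutes up to natural isomorphism with the left adjoints of the two diagrams. For the identity functors (the inclusions into the localizations) this is immediate, and for the free-module functor $\bU DE\fcal_+^\torus \wedge -$ it is the canonical isomorphism
\[
H\bQ \wedge (\bU DE\fcal_+^\torus \wedge X) \;\cong\; (H\bQ \wedge \bU DE\fcal_+^\torus) \wedge_{H\bQ} (H\bQ \wedge X),
\]
both sides being the free $H\bQ \wedge \bU DE\fcal_+^\torus$-module on $X$. With these checks Lemma \ref{lemma:QEondiag} delivers the desired Quillen equivalence of the diagram-module categories, and strong symmetric monoidality is automatic since the monoidal product on diagram-modules is defined objectwise and each vertex-wise left adjoint is strong monoidal.

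The one delicate point is the module-category comparison: one has to make sure the hypotheses of \cite[Theorem 3.12]{ss03monequiv} really apply -- that the transferred module model structures exist and are cofibrantly generated (this follows from the monoid axiom, valid in $\mathrm{Sp}^\Sigma$ and in $H\bQ \leftmod$) and that, rationalization being a smashing localization, $\bU DE\fcal_+^\torus$-modules in $L_{S_\bQ}\mathrm{Sp}^\Sigma$ coincide with the rationalization of $\bU DE\fcal_+^\torus$-modules. Everything else is a routine transcription of the argument already given for the $(\bP,\bU)$-step.
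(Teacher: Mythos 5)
Your proposal is correct and follows essentially the same route as the paper: the paper's own proof is simply "the base adjunction $(H\bQ\wedge -, U)$ is a Quillen equivalence with strong symmetric monoidal left adjoint, so the result follows by the same argument as for $(\bP,\bU)$," i.e.\ extension to module categories, compatibility with the localization of the middle vertex, and an appeal to Lemma \ref{lemma:QEondiag}. You have merely made explicit the details (the module-transfer via \cite[Theorem 3.12]{ss03monequiv}, the role of the cofibrant replacement in commutative rings, and the naturality squares) that the paper leaves implicit.
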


\begin{rmk}
It is essential for the formality argument in Section \ref{subsec:ramod} that the ring spectrum
$H\bQ \wedge \bU DE\cF_+^{\torus}$ is commutative. Without this, 
one is unable to replace the ring $S_t$ (defined below) by the simpler ring $\ocal_\fcal$,
nor can one understand the localising set $A''$ (defined in the next Section) in terms of the inclusion 
$\ocal_\fcal \to \ecal^{-1} \ocal_\fcal$.
\end{rmk}

Now we are ready to use the results from \cite{shiHZ} to move from topology to algebra on generalized diagrams.
Let $\Theta$ denote the derived functor described in \cite[Section 2.2]{shiHZ}. This functor $\Theta$ induces an equivalence
between $\h \qq$-modules and rational chain complexes.
By \cite[Theorem 1.2]{shiHZ}
there is a commutative rational differential graded algebra
$S_t$, which is naturally weakly equivalent to
$\Theta (H\bQ \wedge \bU DE\cF_+^{\torus})$ such that
the model category of $S_t$--modules (in $Ch(\bQ)$) is Quillen equivalent to
the model category of $H\bQ \wedge \bU DE\cF_+^{\torus}$-modules (in $\torus$-spectra).

Let $S^\bullet_t $ be the diagram of model categories below,
where $\Theta (H\bQ \wedge \bU (\Sigma^* f)^{\torus})$
denotes the image of the set of maps
$H\bQ \wedge \bU (\Sigma^* f)^{\torus}$
in the category of $S_t$--modules under the derived functor.
\[
\xymatrix@C=4pc @R=3pc{
S_t \leftmod
\ar@<+1ex>[r]^-{\mathrm{Id}}
&
L_{\Theta (H\bQ \wedge \bU (\Sigma^* f)^{\torus})}
(S_t \leftmod)
\ar@<+0.5ex>[l]^-{\mathrm{Id}}
\ar@<-1ex>[r]_-{U}
&
 \ch(\qq)
\ar@<-0.5ex>[l]_-{S_t \otimes -}
}
\]

\begin{proposition}\label{prop:cosik}
There is a zig-zag of symmetric monoidal Quillen equivalences 
$${H\bQ \wedge \bU S^\bullet_{\Top} \leftmod}\simeq S_t^\bullet \leftmod.$$
 \end{proposition}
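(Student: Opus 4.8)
The plan is to run the zig-zag of symmetric monoidal Quillen equivalences of \cite{shiHZ} between $H\bQ\leftmod$ and $\ch(\qq)$ \emph{in families}, over the rings and ring maps appearing in the two diagrams, and then to assemble everything by the monoidal refinement of Lemma~\ref{lemma:QEondiag}. Recall that in \cite[Section 2.2]{shiHZ} the equivalence $\Theta$ is realised by a finite chain of Quillen equivalences through intermediate symmetric monoidal model categories $\mathcal{M}^{(0)}=H\bQ\leftmod,\ \mathcal{M}^{(1)},\ \dots,\ \mathcal{M}^{(r)}=\ch(\qq)$, in which every connecting functor is (lax or strong) symmetric monoidal, and that for any commutative ring object $R$ of $H\bQ\leftmod$ this chain restricts to a chain of Quillen equivalences between $R\leftmod$ and $(\Theta R)\leftmod$, naturally in $R$ along maps of commutative rings; moreover \cite[Theorem 1.2]{shiHZ} supplies a commutative DGA $S_t$ together with a chain of quasi-isomorphisms of commutative DGAs linking $S_t$ and $\Theta(H\bQ\smashprod\bU DE\cF_+^{\torus})$.

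First I would promote each stage $\mathcal{M}^{(k)}$ to a $\pscr$--diagram of model categories $\mathcal{M}^{(k)\bullet}$ with vertices: modules over the image $R^{(k)}$ of $H\bQ\smashprod\bU DE\cF_+^{\torus}$ in $\mathcal{M}^{(k)}$; the left Bousfield localization of that category at the derived image of the set $H\bQ\smashprod\bU(\Sigma^* f)^{\torus}$; and $\mathcal{M}^{(k)}$ itself, i.e. modules over the unit (the image of $H\bQ$). Its two left adjoints are the identity into the localization and extension of scalars along $H\bQ\to H\bQ\smashprod\bU DE\cF_+^{\torus}$ (transported to stage $k$), both strong symmetric monoidal; since each $\mathcal{M}^{(k)}$ is cellular, proper and monoidal, and left Bousfield localization preserves these properties in the present stable setting, $\mathcal{M}^{(k)\bullet}\leftmod$ is a cellular proper monoidal model category satisfying the monoid axiom. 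By construction $\mathcal{M}^{(0)\bullet}\leftmod = H\bQ\smashprod\bU S^\bullet_{\Top}\leftmod$; and at the top the chain of quasi-isomorphisms $S_t\simeq\Theta(H\bQ\smashprod\bU DE\cF_+^{\torus})$ of commutative DGAs induces — since a quasi-isomorphism of commutative DGAs gives a symmetric monoidal Quillen equivalence of module categories that matches up the localizing sets — a further symmetric monoidal Quillen equivalence $\mathcal{M}^{(r)\bullet}\leftmod\simeq S_t^\bullet\leftmod$.

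Next I would check that consecutive stages $\mathcal{M}^{(k)\bullet}$ and $\mathcal{M}^{(k+1)\bullet}$ are related by a map of $\pscr$--diagrams of model categories, in the sense of the discussion preceding Lemma~\ref{lemma:QEondiag}. The $k$-th connecting functor of \cite{shiHZ} is symmetric monoidal and natural in the ring input, so it commutes up to natural isomorphism with extension of scalars, which is exactly the compatibility $P_2L\cong L'P_1$ and $P_2F\cong F'P_3$ required there; and it carries a set of generators of the localization at stage $k$ to a set of generators of the localization at stage $k+1$ — indeed the localizing set at each stage is \emph{defined} to be the derived image of the previous one, mirroring how $S_t^\bullet$ and the set $\Theta(H\bQ\smashprod\bU(\Sigma^* f)^{\torus})$ were set up just above — so the identity functors of the two diagrams intertwine the localizations. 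As every component adjunction $(P_i,Q_i)$ is a symmetric monoidal Quillen equivalence (for $H\bQ$-algebra inputs), the monoidal version of Lemma~\ref{lemma:QEondiag} yields a symmetric monoidal Quillen equivalence $\mathcal{M}^{(k)\bullet}\leftmod\simeq\mathcal{M}^{(k+1)\bullet}\leftmod$. Composing these for $k=0,\dots,r-1$ with the identification of the last term with $S_t^\bullet\leftmod$ produces the required zig-zag $H\bQ\smashprod\bU S^\bullet_{\Top}\leftmod\simeq S_t^\bullet\leftmod$, symmetric monoidal throughout.

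The main obstacle I anticipate is the compatibility bookkeeping rather than any new homotopy theory: one must pin down the zig-zag of \cite{shiHZ} so that, simultaneously at all three vertices and at every stage, the squares of left adjoints commute up to coherent natural isomorphism and the right Bousfield localizations are preserved. Concretely this means spelling out two points that are present in \cite{shiHZ} but not emphasised there — that each connecting functor is symmetric monoidal and natural with respect to maps of commutative ring objects (so base-change squares commute up to isomorphism), and that it sends a set generating a localization to a set generating the transported localization (so the localized model structures correspond) — together with the observation that the comparison $S_t\simeq\Theta(H\bQ\smashprod\bU DE\cF_+^{\torus})$ can be realised by maps of \emph{commutative} DGAs, which is precisely what makes the induced Quillen equivalence of module categories symmetric monoidal and compatible with the localizing sets (cf. the preceding Remark and the formality argument of Section~\ref{subsec:ramod}). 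Once these are in place, the conclusion is a formal application of the diagram-of-model-categories machinery.
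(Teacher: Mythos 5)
Your proposal is correct and follows essentially the same route as the paper: extend the zig-zag of symmetric monoidal Quillen equivalences of \cite{shiHZ} from $\h\qq$--modules to modules over $H\bQ \wedge \bU DE\cF_+^{\torus}$, promote each stage to a $\pscr$--diagram with the middle vertex localized at the derived image of the localizing set, and conclude by Lemma \ref{lemma:QEondiag}. The paper's proof is just a terser statement of the same argument, so your more detailed bookkeeping of the compatibilities is a faithful expansion rather than a different approach.
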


\begin{proof}
There is a zig-zag of symmetric monoidal adjunctions
between $\h\qq$--modules and $\ch(\qq)$.
By \cite[Corollary 2.15]{shiHZ} this zig-zag consists of Quillen equivalences. We can extend this zig-zag from
$\h \qq$--modules to
$H\bQ \wedge \bU DE\cF_+^{\torus}$--modules
in a natural way.

We can extend further to diagrams of model categories.
Thus we obtain a zig-zag of adjunctions between
${H\bQ \wedge \bU S^\bullet_{\Top} \leftmod}$ and
$S_t^\bullet \leftmod$.
At each stage, we have localized the middle category of the diagram at
the derived image (i.e. image under the derived functor) of the set of maps
$\{H\bQ \wedge \bU (\Sigma^* f)^{\torus}\}$.
We apply Lemma \ref{lemma:QEondiag}
to see that we have a symmetric monoidal Quillen equivalence
as claimed.
\end{proof}

\begin{corollary}\label{cor:gettingtoalgebra}
Denote the derived images (i.e. images under the derived functor) of the cells
$K^{\torus}_{\Top}$ in
$S^\bullet_t \leftmod$ by $K_t$.
Then there is a zig-zag of
Quillen equivalences 
\[
K_{\Top}^{\torus} \cell S^\bullet_{\Top} \leftmod \simeq  K_t \cell S_t^\bullet \leftmod.
\]
\end{corollary}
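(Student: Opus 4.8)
The plan is to combine Proposition~\ref{prop:cosik} with the two symmetric monoidal Quillen equivalences that immediately precede it --- the passage from orthogonal to symmetric spectra and the passage to $H\bQ$--modules --- so as to obtain a single zig-zag of symmetric monoidal Quillen equivalences
\[
S^\bullet_{\Top} \leftmod \;\simeq\; \bU S^\bullet_{\Top}\leftmod \;\simeq\; H\bQ\wedge\bU S^\bullet_{\Top}\leftmod \;\simeq\; S_t^\bullet\leftmod ,
\]
and then to lift this zig-zag to the cellularizations by the Cellularization Principle \cite[Proposition~2.7]{gscell}, precisely as was done for the single Quillen equivalence $a_\#\dashv(-)^{\torus}$ in Corollary~\ref{cor:removeequivariancecell}.

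First I would note that $K_{\Top}^{\torus}$ is a set of cofibrant, homotopically small objects of $S^\bullet_{\Top} \leftmod$: this is immediate since $K_{\Top}$ has these properties (established before Proposition~\ref{prop:Smodulesequivalence}), since $K_{\Top}^{\torus}$ is by definition its derived image under the Quillen equivalence $(-)^{\torus}$ of Theorem~\ref{thm:removeequivariance}, and since a derived equivalence of stable homotopy categories carries compact objects to compact objects. I would then transport $K_{\Top}^{\torus}$ along the zig-zag above, one leg at a time: apply the derived left adjoint when its left adjoint points away from the category currently carrying the cells, and the derived right adjoint when it points toward it. At each stage the image is again a set of cofibrant homotopically small objects, since every leg is a Quillen equivalence; the terminal set of cells in $S_t^\bullet \leftmod$ is, by definition, $K_t$.

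Finally I would apply \cite[Proposition~2.7]{gscell} at each leg. Because every leg of the zig-zag is already a Quillen equivalence, the relevant derived unit (respectively counit) is automatically a weak equivalence, so the hypothesis needed to conclude that the leg restricts to a Quillen equivalence between the corresponding cellularizations holds; here one uses only that each vertex is a stable, right proper model category (all vertices in sight are built from $\ch(\qq)$, $H\bQ$--modules and their module categories, localizations and diagram-module categories, which are all stable and proper) and that the transported cells are cofibrant and homotopically small. Composing the resulting Quillen equivalences over the zig-zag yields the claimed equivalence $K_{\Top}^{\torus} \cell S^\bullet_{\Top} \leftmod \simeq K_t \cell S_t^\bullet \leftmod$. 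The only step demanding real attention is the bookkeeping of the transported cell sets --- verifying that at each stage the image of $K_{\Top}^{\torus}$ is again a legitimate set of homotopically small cofibrant objects and that the Cellularization Principle applies to it --- but since all the homotopical input has been assembled in the preceding propositions, no genuinely new argument is required.
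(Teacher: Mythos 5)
Your proposal is correct and follows essentially the same route as the paper: the paper likewise concatenates the orthogonal-to-symmetric, $H\bQ$-module, and Proposition~\ref{prop:cosik} equivalences into one zig-zag and then invokes the Cellularization Principle of \cite[Proposition~2.7]{gscell} at each leg, cellularizing at the transported (derived images of the) cells, which remain cofibrant and homotopically small because each leg is already a Quillen equivalence. The paper's stated justification is terser than yours but contains no additional ideas.
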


Since cellularization is compatible with Quillen
equivalences , i.e. all Quillen equivalences presented above are still Quillen equivalences after cellularising at the derived images of the cells from the set $K^{\torus}_{\Top}$. By the discussion in Section \ref{subsec:cellularization} and \ref{sec:application} the above zig-zag consists of symmetric monoidal Quillen equivalences.

\section{Simplifying the algebraic category}

We have shown so far that the category of rational $\torus$-spectra has an algebraic model of the form  $K_t \cell S_t^\bullet \leftmod$. However, since this category is not well understood, in this section we perform several steps to obtain a more concrete and easier algebraic model.

\subsection{Removing the localization}\label{subsec:ramod}

In this section we have two tasks: replace the
commutative dga $S_t$ by something simpler
and remove the localization of the middle
model category.

The main idea is to do a formality argument, similar to the one in
\cite[Section 10]{tnqcore}. However, the important difference lies in
adapting the formality argument to one for  {\em modules} over a commutative dga. This is enough to simplify the middle model category in $S_t^\bullet $. 

The construction of $\Theta$ comes with
an isomorphism between $\h_* (\Theta X)$ and
$\pi_* (X)$ for any $\h\qq$--module $X$.
It follows that the homology of $S_t$ is determined by the
rational homotopy groups of $DE \fcal_+^\torus$.
We prove that the homology
of $S_t=\theta(\h \qq \smashprod \bU DE\fcal^{\torus}_+)$ is so well--structured that $S_t$ is quasi--isomorphic to its homology.
We then use this to understand the set of maps
$A=\Theta (H\bQ \wedge \bU (\Sigma^* f)^{\torus})$.

Recall that  $\ocal_\fcal$ is the graded ring
$\prod_{n \geqslant 1} \qq[c_n]$
with each $c_n$ of degree $-2$ and
$\ecal^{-1}\ocal_\fcal$ is the colimit over $n$ of
$\ocal_\fcal[c_1^{-1}, \dots, c_n^{-1}]$,
see Section \ref{subsec:the model}.

\begin{lemma}
We have isomorphisms of graded rings
\[
\begin{array}{ccccccc}
\h_* (S_t)
& \cong &
\h_* (\Theta(\h \qq \smashprod \bU (DE\fcal_+^\torus)) )
& \cong &
\pi_* (\h \qq \smashprod \bU (DE\fcal_+^\torus))  \\
& \cong &
\pi_* (DE\fcal_+^\torus) \otimes \qq
& \cong &
\pi_*^{\torus} (DE\fcal_+)\otimes \qq
& \cong &
\ocal_\fcal.
\end{array}
\]
\end{lemma}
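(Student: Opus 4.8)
The plan is to establish the chain of isomorphisms by reading it from right to left, since most of the work has already been done elsewhere and only needs to be assembled. First I would note that the first isomorphism is immediate: $S_t$ is \emph{defined} to be $\Theta(\h\qq \smashprod \bU DE\fcal_+^\torus)$ (up to the natural weak equivalence of \cite[Theorem 1.2]{shiHZ}), and $\Theta$ is a derived equivalence, so it induces an isomorphism on homology. The second isomorphism is the defining property of $\Theta$ recalled just above the statement, namely that $\h_*(\Theta X) \cong \pi_*(X)$ for every $\h\qq$--module $X$; this is a ring isomorphism because $\Theta$ is (lax) symmetric monoidal and $\bU DE\fcal_+^\torus$ is a commutative ring spectrum. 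The third isomorphism, $\pi_*(\h\qq \smashprod \bU DE\fcal_+^\torus) \cong \pi_*(DE\fcal_+^\torus)\otimes\qq$, is just the statement that smashing with $\h\qq$ computes rational homotopy (since $\h\qq$ is the rational Eilenberg--MacLane/Moore spectrum and $DE\fcal_+^\torus$ has homotopy concentrated in even degrees, as will follow from the final identification, so there are no $\mathrm{Tor}$ terms); the multiplicativity is again formal. The fourth isomorphism, $\pi_*(DE\fcal_+^\torus)\otimes\qq \cong \pi_*^\torus(DE\fcal_+)\otimes\qq$, is precisely the statement that $(-)^\torus$ induces a Quillen equivalence on the relevant module categories, proved in the earlier subsection \ref{subsec:fixedpoints}; concretely, $\pi_*((-)^\torus)$ of a fibrant object agrees with the equivariant homotopy of the original, and $DE\fcal_+$ is fibrant in the positive stable model structure. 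Alternatively one invokes that the derived functor of $(-)^\torus$ is the geometric fixed point functor and $\Phi^\torus$ commutes with the relevant constructions.

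The one genuinely substantive step is the last isomorphism, $\pi_*^\torus(DE\fcal_+)\otimes\qq \cong \ocal_\fcal$. Here I would appeal directly to the computations of \cite{gre99}: by definition $DE\fcal_+ = F(E\fcal_+, N^\# S)$, so $\pi_*^\torus(DE\fcal_+)$ is the $E\fcal$--cohomology of $S^0$, which is the ring of ``functions on the finite subgroups'' computed in \cite[Section 2]{gre99} to be $\prod_{n\geq 1}\qq[c_n] = \ocal_\fcal$, with $c_n$ the equivariant Euler class of the $n$th representation sitting in degree $-2$. The ring structure comes from the diagonal on $E\fcal_+$ and matches the product ring structure on $\ocal_\fcal$; the idempotents $e_n$ correspond to the splitting of $E\fcal_+$ coming from \cite[Lemma 2.1.5]{gre99} (the same splitting used to define the cells $\sigma_n$). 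Rationally there are no higher contributions because each $\qq[c_n]$ is polynomial and the product is over the discrete poset of finite cyclic subgroups.

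The main obstacle, such as it is, is keeping track of which model structures and fibrancy/cofibrancy conditions are needed so that ``$\pi_*$ of the point-set functor'' agrees with ``the derived $\pi_*$'' at each stage — in particular that $DE\fcal_+$ (resp. $DE\fcal_+^\torus$, resp. $\h\qq\smashprod\bU DE\fcal_+^\torus$) is suitably fibrant/cofibrant so that the displayed isomorphisms are between \emph{derived} invariants. All of this is routine given the references, so I would state each isomorphism with its citation and remark only that the composite is an isomorphism of graded rings because every functor involved is (lax) monoidal and every object involved is a (commutative) ring object.
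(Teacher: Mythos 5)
Your proposal is correct and matches the paper's (essentially unwritten) argument: the paper states the lemma with no proof beyond the remark that the step $\pi_*(DE\fcal_+^\torus)\otimes\qq \cong \pi_*^\torus(DE\fcal_+)\otimes\qq$ requires $DE\fcal_+$ to be positively fibrant, which is exactly the point you flag, and the remaining identifications are the defining property of $\Theta$, rationalization, and the computation of $\pi_*^\torus(DE\fcal_+)$ from \cite{gre99}. One small imprecision: the isomorphism $\pi_*(\h\qq\smashprod X)\cong\pi_*(X)\otimes\qq$ holds for \emph{any} $X$ because $\h\qq$ is equivalent to the rational Moore (sphere) spectrum, so no appeal to evenness or vanishing of Tor terms is needed there.
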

Note that for the step
$\pi_* (DE\fcal_+^\torus) \otimes \qq
\cong
\pi_*^{\torus} (DE\fcal_+)\otimes \qq $
we require $DE\fcal_+$ to be a (positive) fibrant
spectrum.

We want to create a zig--zag of quasi--isomorphisms
between $S_t$ and $\ocal_\fcal$.
For each $n \geqslant 1$ there is a cycle $x_H$
inside $S_t$ which represents $e_n$ (projection onto factor $n$)
in homology.
It follows that the homology of
$S_t [(x_n)^{-1}]$ is
equal to $e_n$ applied to the homology of $S_t$.
Note that for this argument to hold, we need to know that $S_t$
is a commutative dga, which requires that
$DE \fcal_+$ be a commutative
ring object in $\torus$--spectra.

Define $\widetilde{S}_t = \prod_{n \geqslant 1} S_t [x_n^{-1}]$.
There is a canonical map $\alpha \co S_t \to \widetilde{S}_t$
which is a homology isomorphism.
For each $n \geqslant 1$, pick a representative $a_n$
in $S_t [x_n^{-1}]$ for the homology class of $c_n$.
We thus have a map
$\qq[c_n] \to S_t [x_n^{-1}]$ which sends
$c_n$ to $a_n$.
Define $\beta \co \ocal_\fcal \to \widetilde{S}_t$
as the product over $n$ of these maps.
We now have our zig-zag of quasi-isomorphisms.

Let $A'$ be the image of the set $A$ under
(derived) extension of scalars along $\alpha$.
Define a new diagram of model categories,
$\widetilde{S}_t^\bullet $ as
\[
\xymatrix@C=4pc @R=3pc{
\widetilde{S}_t \leftmod
\ar@<+1ex>[r]^-{\mathrm{Id}}
&
L_{A'}
(\widetilde{S}_t \leftmod)
\ar@<+0.5ex>[l]^-{\mathrm{Id}}
\ar@<-1ex>[r]_-{U}
&
\ch(\qq)
\ar@<-0.5ex>[l]_-{S_t \otimes -}
}
\]
Extension and restriction of scalars along $\alpha \co S_t \to \widetilde{S}_t$
induce a symmetric monoidal
Quillen equivalence between $S_t^\bullet \leftmod$
and $\widetilde{S}_t^\bullet \leftmod$.

We repeat this construction once more using $\beta$.
Let $A''$ be the image of the set $A'$ under
restriction of scalars along $\beta$.
Define a new diagram of model categories,
$\widetilde{S}_a^\bullet $ as
\[
\xymatrix@C=4pc @R=3pc{
\ocal_\fcal \leftmod
\ar@<+1ex>[r]^-{\mathrm{Id}}
&
L_{A''}
(\ocal_\fcal\leftmod)
\ar@<+0.5ex>[l]^-{\mathrm{Id}}
\ar@<-1ex>[r]_-{U}
&
\ch(\qq)
\ar@<-0.5ex>[l]_-{S_t \otimes -}
}
\]
Extension and restriction of scalars along $\beta \co \ocal_\fcal \to \widetilde{S}_t$
induce a symmetric monoidal
Quillen equivalence between $\widetilde{S}_{\widetilde{a}}^\bullet \leftmod$
and $\widetilde{S}_t^\bullet \leftmod$.

We summarise these results in the following.

\begin{proposition}
The adjoint pairs of extension and restriction of scalars
along $\alpha$ and $\beta$ induce symmetric monoidal
Quillen equivalences 
$$S_t^\bullet \leftmod\simeq \widetilde{S}_t^\bullet \leftmod\simeq \widetilde{S}_{\widetilde{a}}^\bullet \leftmod.$$

Let $K_{\widetilde{t}}$ be the derived images of the cells $K_t$ in
$\widetilde{S}_t^\bullet \leftmod$
and $K_{\widetilde{a}}$ the derived images in
$\widetilde{S}_a^\bullet \leftmod$.
Then we have Quillen equivalences between
$K_{t} \cell S_t^\bullet \leftmod$,
$K_{\widetilde{t}} \cell U^\bullet \leftmod$ and
$K_{\widetilde{a}} \cell \widetilde{S}_a^\bullet \leftmod$.
\end{proposition}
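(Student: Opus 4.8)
The plan is to prove the proposition in two halves: first the statements about module categories on the noses (no cellularization), then the cellularized versions. For the first half, the key point is that both $\alpha \co S_t \to \widetilde{S}_t$ and $\beta \co \ocal_\fcal \to \widetilde{S}_t$ are quasi-isomorphisms of commutative dgas, established in the preceding paragraphs (the homology computation of $S_t$, the localization arguments showing $\h_*(S_t[x_n^{-1}]) = e_n \h_*(S_t)$, and the fact that $\beta$ is a homology isomorphism onto each factor by construction). A quasi-isomorphism of (commutative) dgas induces a Quillen equivalence between the respective module categories via extension and restriction of scalars, and moreover the left adjoint extension of scalars is strong symmetric monoidal; this is standard (e.g.\ \cite{shiHZ} or \cite{ss03monequiv}). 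So one gets symmetric monoidal Quillen equivalences on the left-hand vertices of the $\pscr$-diagrams, the identity on the $\ch(\qq)$ vertex, and on the middle vertex one localizes $\widetilde{S}_t\leftmod$ at $A'$ (respectively $\ocal_\fcal\leftmod$ at $A''$), where $A'$ and $A''$ are precisely the derived images of the localizing set under the relevant change-of-rings functor. Since left Bousfield localization at the derived image of a set is compatible with a Quillen equivalence (\cite[Theorem 3.3.20]{hir03}), the middle adjunctions are also symmetric monoidal Quillen equivalences. The commutativity hypotheses needed to apply Lemma~\ref{lemma:QEondiag} in its monoidal form ($P_2 L \cong L' P_1$ and $P_2 F \cong F' P_3$ up to natural isomorphism) hold because all the horizontal functors in the diagrams are either identities or a fixed extension-of-scalars functor $S_t\otimes-$ precomposed with a forgetful functor, and extension of scalars commutes with further change of rings up to canonical isomorphism.

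Assembling the three vertex-wise symmetric monoidal Quillen equivalences via Lemma~\ref{lemma:QEondiag} (and the monoidal refinement recorded just after it) then yields the symmetric monoidal Quillen equivalences $S_t^\bullet \leftmod \simeq \widetilde{S}_t^\bullet \leftmod \simeq \widetilde{S}_{\widetilde a}^\bullet \leftmod$. For the cellularized statement, one invokes the Cellularization Principle \cite[Proposition 2.7]{gscell} exactly as in Corollary~\ref{cor:removeequivariancecell}: given a Quillen equivalence of stable model categories and a set of homotopically small cells on one side, cellularizing at those cells and at their derived images on the other side preserves the Quillen equivalence. Here the cells $K_t$ in $S_t^\bullet \leftmod$ have derived images $K_{\widetilde t}$ in $\widetilde{S}_t^\bullet \leftmod$ and $K_{\widetilde a}$ in $\widetilde{S}_a^\bullet \leftmod$ by definition, and each element is cofibrant and homotopically small because this property is preserved along the Quillen equivalences (being images of the homotopically small generators of $\TSP$ traced through the whole zig-zag). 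So the hypotheses of the Cellularization Principle are met and one concludes Quillen equivalences between $K_t \cell S_t^\bullet \leftmod$, $K_{\widetilde t} \cell \widetilde{S}_t^\bullet \leftmod$ and $K_{\widetilde a} \cell \widetilde{S}_a^\bullet \leftmod$ (the middle one is mislabelled $K_{\widetilde t}\cell U^\bullet\leftmod$ in the statement; it should read $\widetilde{S}_t^\bullet$).

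I expect the main obstacle to be bookkeeping rather than any deep difficulty: one must be careful that the localizing set appearing in each middle vertex is genuinely the \emph{derived} image of $A$ (respectively $A'$) under the change-of-rings functor, so that \cite[Theorem 3.3.20]{hir03} applies and the localized Quillen adjunction is still an equivalence. A subtle point is that $\widetilde{S}_t = \prod_{n\geqslant 1} S_t[x_n^{-1}]$ is an \emph{infinite product} of localizations, so one must check that extension of scalars along $\alpha$ behaves well — in particular that $\alpha$ is genuinely a homology isomorphism of commutative dgas (which is where the careful idempotent/localization analysis of $\h_*(S_t)\cong \ocal_\fcal = \prod_n \qq[c_n]$ is used) and that the relevant model categories of modules remain cellular and proper so that the diagram-of-model-categories machinery of \cite{gsmodules} continues to apply. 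None of this requires new ideas; it is an application of the formalism set up in Section~3 together with the formality computations that precede the proposition, so the proof is essentially a matter of citing the right results in the right order.
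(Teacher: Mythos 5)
Your proposal is correct and follows essentially the same route as the paper, which simply summarises the preceding formality construction: the quasi-isomorphisms $\alpha$ and $\beta$ induce symmetric monoidal Quillen equivalences of module categories vertex-by-vertex (with the localising sets transported as derived images so that the middle vertices match), the diagram-level equivalences follow from Lemma \ref{lemma:QEondiag}, and the cellularized statement follows from the Cellularization Principle applied to the derived images of the cells. Your observation that ``$K_{\widetilde{t}} \cell U^\bullet \leftmod$'' in the statement should read ``$K_{\widetilde{t}} \cell \widetilde{S}_t^\bullet \leftmod$'' is also correct.
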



Our next task is to understand the set of maps in $A''$
so that we can remove the localization in the middle model category in the diagram of model categories
 $\widetilde{S}_a^\bullet $.
We show that there is a zig-zag of homology isomorphisms between
\[
{ \theta (\bU DE\cF_+^{\torus}) \lra \theta (\bU (\widetilde{E}\cF \wedge DE\cF_+)^{\torus})}
\quad \text{and} \quad
j \co \ocal_\fcal \lra \ecal^{-1} \ocal_\fcal.
\]
It will follow that we can replace the set $A''$
by the set of all shifts of $j$ without changing
the effect of the localization.
That is, we will show that the model categories
$L_{A''}(\cO_\cF \leftmod)$ and
$L_{\Sigma^* j} (\cO_\cF \leftmod)$ are equal.

The zig-zag of homology isomorphisms of $\cO_\cF$ modules that we will use is as follows.
Factor $\theta (\bU DE\cF_+^{\torus}) \lra \theta (\bU (\widetilde{E}\cF \wedge DE\cF_+)^{\torus})$
into cofibration followed by an acyclic fibration
(with intermediate term $R$).
Let $C$ be the pushout of the square below.
Since $\cO_\cF \leftmod$ is left proper
it follows that $R \lra C$ is a quasi-isomorphism.
\[
\xymatrix@C=4pc@R=1.2pc{
\theta (\bU DE\cF_+^{\torus})
\ar@{>->}[r]
\ar[d]_\simeq &
R
\ar[d]^{\simeq}
\ar[r]^-\simeq &
\theta (\bU (\widetilde{E}\cF \wedge DE\cF_+)^{\torus})\\
\widetilde{S}_t \ar[r]^{f} & C \ar@{}[ul]|(.2){\ulcorner} \\
 \cO_\cF
\ar[u]^{\simeq}
\ar[r]^a \ar[d]_{\Id} &
C \ar[u]_{\id} \ar[d] \\
 \cO_\cF \ar[r] & \cE^{-1}C \\
 \cO_\cF \ar[u]^{\id} \ar[r]^{j} &
\cE^{-1}\cO_\cF \ar[u]^{\ecal^{-1} a}
 }
\]
The functor defined by $M \mapsto \ecal^{-1}M $
on $\ocal_\fcal$--modules $M$ is exact.
It follows that $C \lra \ecal^{-1} C$ is a homology isomorphism,
since $\ecal^{-1}$ is already inverted on homology.
The map $f$ induces a homology isomorphism once $\ecal$ has
been inverted, hence so does $a$.
It follows that $\ecal^{-1} a$ is a  homology isomorphism.

Thus we have shown that model categories $L_{A''} (\ocal_\fcal\leftmod)$
and $L_{\Sigma^* j}(\cO_\cF \leftmod)$ are equal. Now we are ready to remove the localization all together.

\begin{lemma}\label{zamiastLocBedzieInnyPierscien}
The adjunction induced by the inclusion
of rings $j: \cO_\cF \lra \cE^{-1}\cO_\cF$
induces a symmetric monoidal Quillen equivalence
 \[
\xymatrix{
\cE^{-1}\cO_\cF \otimes_{\cO_\cF} -\ :\
L_{\Sigma^* j}(\cO_\cF \leftmod)
 \ar@<+1ex>[r] & \
\cE^{-1}\cO_\cF \leftmod\ :\ j^* \ar@<+0.5ex>[l]
}.
\]
\end{lemma}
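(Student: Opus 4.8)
The plan is to recognise this as an instance of the standard principle that left Bousfield localising a category of modules at the shifts of a flat ring epimorphism $R\to R'$ reproduces exactly the category of $R'$-modules. First I would record the formalities: extension and restriction of scalars along $j$ give a Quillen adjunction between the projective model structures on $\cO_\cF\leftmod$ and $\cE^{-1}\cO_\cF\leftmod$ (weak equivalences the quasi-isomorphisms, all objects fibrant), and it is strong symmetric monoidal since $\cE^{-1}\cO_\cF$ is a commutative $\cO_\cF$-algebra, so $\cE^{-1}\cO_\cF\otimes_{\cO_\cF}(M\otimes_{\cO_\cF}N)\cong(\cE^{-1}\cO_\cF\otimes_{\cO_\cF}M)\otimes_{\cE^{-1}\cO_\cF}(\cE^{-1}\cO_\cF\otimes_{\cO_\cF}N)$ and $\cE^{-1}\cO_\cF\otimes_{\cO_\cF}\cO_\cF\cong\cE^{-1}\cO_\cF$. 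The one genuinely algebraic input is that $\cE^{-1}\cO_\cF$ is a filtered colimit of localisations of $\cO_\cF$, hence is flat over $\cO_\cF$ and \emph{idempotent} as an $\cO_\cF$-algebra: $\cE^{-1}\cO_\cF\otimes_{\cO_\cF}\cE^{-1}\cO_\cF\cong\cE^{-1}\cO_\cF$. Consequently the left adjoint carries every map in $\Sigma^* j$ to an isomorphism, so by the universal property of left Bousfield localisation the adjunction descends to a Quillen adjunction out of $L_{\Sigma^* j}(\cO_\cF\leftmod)$.

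The heart of the matter is then to identify the fibrant objects of $L_{\Sigma^* j}(\cO_\cF\leftmod)$ — equivalently the $\Sigma^* j$-local complexes — as precisely the complexes of $\cO_\cF$-modules on which $\cE$ acts invertibly, i.e. the essential image of $j^*$. Since $j$ is a monomorphism with cokernel $\cE^{-1}\cO_\cF/\cO_\cF$, a complex $M$ is $\Sigma^* j$-local exactly when $\mathrm{RHom}_{\cO_\cF}(\cE^{-1}\cO_\cF/\cO_\cF,M)\simeq 0$. For $M=j^*N$ with $N$ an $\cE^{-1}\cO_\cF$-module, the derived change-of-rings identity $\mathrm{RHom}_{\cO_\cF}(\cE^{-1}\cO_\cF,j^*N)\simeq\mathrm{RHom}_{\cE^{-1}\cO_\cF}(\cE^{-1}\cO_\cF\otimes_{\cO_\cF}\cE^{-1}\cO_\cF,N)\simeq N$ (flatness plus idempotence) together with the cofibre sequence $\cO_\cF\to\cE^{-1}\cO_\cF\to\cE^{-1}\cO_\cF/\cO_\cF$ shows $\mathrm{RHom}_{\cO_\cF}(\cE^{-1}\cO_\cF/\cO_\cF,j^*N)\simeq 0$, so $j^*N$ is local. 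Conversely, for any cofibrant $X$ the short exact sequence obtained by tensoring $\cO_\cF\to\cE^{-1}\cO_\cF\to\cE^{-1}\cO_\cF/\cO_\cF$ with the (degreewise flat) complex $X$ shows that the cofibre of $X\to\cE^{-1}\cO_\cF\otimes_{\cO_\cF}X$ is $(\cE^{-1}\cO_\cF/\cO_\cF)\otimes_{\cO_\cF}X$; by tensor–hom adjunction this pairs trivially with every local object, hence is $\Sigma^* j$-acyclic, and since $L_{\Sigma^* j}(\cO_\cF\leftmod)$ is stable (we localised at a shift-closed set) this forces $X\to\cE^{-1}\cO_\cF\otimes_{\cO_\cF}X$ to be a $\Sigma^* j$-local equivalence. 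In particular a local $M$ is quasi-isomorphic to $\cE^{-1}\cO_\cF\otimes^{\mathbf{L}}_{\cO_\cF}M$, so it lies in the image of $j^*$.

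With the local objects identified, the Quillen equivalence follows from the usual criterion. On the one hand $j^*$ reflects weak equivalences between fibrant objects: between $\Sigma^* j$-local complexes the localised weak equivalences are the ordinary quasi-isomorphisms, which $j^*$ visibly detects. On the other hand, for cofibrant $X$ all objects of $\cE^{-1}\cO_\cF\leftmod$ being fibrant, the derived unit of the adjunction at $X$ is exactly the map $X\to j^*(\cE^{-1}\cO_\cF\otimes_{\cO_\cF}X)$, which was shown above to be a $\Sigma^* j$-local equivalence. Finally, the equivalence is symmetric monoidal because its left adjoint is strong symmetric monoidal, both sides are monoidal model categories, and the localisation at $\Sigma^* j$ is monoidal (for $f\in\Sigma^* j$ and cofibrant $X$, $f\otimes X$ is a shift of $X\to\cE^{-1}\cO_\cF\otimes_{\cO_\cF}X$, a local equivalence). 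I expect the only real obstacle to be the identification of the $\Sigma^* j$-local objects in the second step; once the flatness and idempotence of $\cE^{-1}\cO_\cF$ over $\cO_\cF$ and the stability of the localised category are in hand, everything else is formal.
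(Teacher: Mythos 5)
Your proof is correct and rests on exactly the same two algebraic inputs as the paper's --- flatness of $\cE^{-1}\cO_\cF$ over $\cO_\cF$ (established just before the lemma via the exactness of $M \mapsto \ecal^{-1}M$) and the idempotence $\cE^{-1}\cO_\cF \otimes_{\cO_\cF} \cE^{-1}\cO_\cF \cong \cE^{-1}\cO_\cF$ --- but it is organised dually. The paper first asserts a concrete description of the weak equivalences of $L_{\Sigma^* j}(\cO_\cF\leftmod)$ as the maps $f$ for which $\ecal^{-1}H_*(f)$ is an isomorphism, observes that the left adjoint preserves and detects these, and then verifies the derived counit on the single homotopically small generator $\cE^{-1}\cO_\cF$, where it is precisely the idempotence isomorphism. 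You instead characterise the $\Sigma^* j$-local objects as the essential image of $j^*$ (via $\mathrm{RHom}_{\cO_\cF}(\cE^{-1}\cO_\cF/\cO_\cF,-)$ and the cofibre sequence) and verify the derived unit on cofibrant objects. Your route is longer but actually justifies a step the paper takes for granted: its identification of the localised weak equivalences with the $\ecal^{-1}H_*$-isomorphisms is equivalent to your statement that $X \to \cE^{-1}\cO_\cF\otimes_{\cO_\cF}X$ is a local equivalence with local target, i.e.\ that the localisation is smashing. The paper's route buys brevity by checking the counit on one compact generator rather than the unit on all cofibrant objects; yours makes the structure of the localisation explicit, which is arguably the more self-contained argument.
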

\begin{proof}
The cofibrations are unchanged by localization.
The weak equivalences of the model category $L_{\Sigma^* j}(\cO_\cF \leftmod) $
are those maps $f$ such that
\[
H_* (\cE^{-1}\cO_\cF \otimes_{\cO_\cF} f) =
\ecal^{-1} H_* (f)
\]
is an isomorphism.
The left adjoint preserves (and detects) these new weak equivalences,
so we have a symmetric monoidal Quillen adjunction as claimed.
The object $\cE^{-1}\cO_\cF$ is a
homotopically small generator for
(the homotopy category of) $\cE^{-1}\cO_\cF \leftmod$.
If we can show that the derived counit of this
adjunction is a weak equivalence then it will follow
that we have a Quillen equivalence.
This follows since the
counit map is an isomorphism on the generator:
\[
\cE^{-1}\cO_\cF \otimes_{\cO_\cF} \cE^{-1}\cO_\cF
\lra
\cE^{-1}\cO_\cF.
\]
\end{proof}

We use the above result to remove the localization from the middle term in our diagram of model categories. We have a commuting diagram of model categories as below,
where $U$ denotes the forgetful functor.
\[
\xymatrix@C=4pc @R=3pc{
\cO_\cF \leftmod\
\ar@<+1ex>[r]^-{\id}
\ar@<-1ex>[d]_{\id}
&
L_{\Sigma^* j}(\cO_\cF \leftmod)
\ar@<+0.5ex>[l]^-{\id}
\ar@<-1ex>[r]_-{U}
\ar@<-1ex>[d]_{\cE^{-1}\cO_\cF \otimes_{\cO_\cF} -}
&
 \bQ \leftmod
\ar@<-0.5ex>[l]_-{\cO_\cF \otimes -}
\ar@<-1ex>[d]_{\id}\\
\cO_\cF \leftmod\
\ar@<+1ex>[r]^-{\cE^{-1}\cO_\cF \otimes_{\cO_\cF} -}
\ar@<-1ex>[u]_{\id}
&
\cE^{-1}\cO_\cF \leftmod
\ar@<+0.5ex>[l]^-{j^*}
\ar@<-1ex>[r]_-{U}
\ar@<-1ex>[u]_{j^*}
&
\bQ \leftmod
\ar@<-0.5ex>[l]_-{\cE^{-1}\cO_\cF \otimes - }
\ar@<-1ex>[u]_{\id}
}
\]
We denote the bottom row by $S_a^\bullet $, the left adjoint from top to bottom by $\cE^{-1}\cO_\cF \otimes_{\cO_\cF} -$, the right adjoint by $j^*$ and we summarise the above in the following

\begin{proposition}The adjunction (described above)
 \[
\xymatrix{
\cE^{-1}\cO_\cF \otimes_{\cO_\cF} -\ :\
\widetilde{S}^\bullet_a \leftmod \
\ar@<+1ex>[r] &
\ S^\bullet_a \leftmod \ :\ j^* \ar@<+0.5ex>[l]
}
\]
is a symmetric monoidal Quillen equivalence, and thus the adjunction
 \[
\xymatrix{
\cE^{-1}\cO_\cF \otimes_{\cO_\cF} -\ :\
K_{\widetilde{a}} \cell \widetilde{S}^\bullet_a\leftmod \  \ar@<+1ex>[r] & \
K_a  \cell S^\bullet_a\leftmod \ :\ j^* \ar@<+0.5ex>[l]
}
\]
is a Quillen equivalence, where $K_a$ is the derived image of $K_{\widetilde{a}}$ under the left adjoint.
\end{proposition}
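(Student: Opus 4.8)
The plan is to recognise the asserted adjunction as the map of $\pscr$--diagrams of model categories induced by the three vertical Quillen pairs in the commuting square displayed just before the statement, and then to feed this into the machinery already established. First I would observe that, by the discussion preceding Lemma~\ref{zamiastLocBedzieInnyPierscien}, the model categories $L_{A''}(\cO_\cF \leftmod)$ and $L_{\Sigma^* j}(\cO_\cF \leftmod)$ are literally equal, so the top row of that square is precisely the diagram $\widetilde{S}^\bullet_a$ and the bottom row is precisely $S^\bullet_a$. The three vertical pairs are the identity adjunction on $\cO_\cF \leftmod$, the pair $(\cE^{-1}\cO_\cF \otimes_{\cO_\cF} -, \, j^*)$ from $L_{\Sigma^* j}(\cO_\cF \leftmod)$ to $\cE^{-1}\cO_\cF \leftmod$, and the identity adjunction on $\bQ \leftmod$; that these assemble into a map of $\pscr$--diagrams is exactly the commutativity of the two squares, which holds trivially on the left and on the right amounts to the natural isomorphism $\cE^{-1}\cO_\cF \otimes_{\cO_\cF}(\cO_\cF \otimes_\bQ K) \cong \cE^{-1}\cO_\cF \otimes_\bQ K$.

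Each of the three vertical pairs is a symmetric monoidal Quillen equivalence: this is immediate for the two identity pairs, and for the middle one it is precisely Lemma~\ref{zamiastLocBedzieInnyPierscien}. Hence, by Lemma~\ref{lemma:QEondiag} together with the observation recorded after it --- that a map of $\pscr$--diagrams all of whose three components are symmetric monoidal Quillen equivalences is itself a symmetric monoidal Quillen equivalence --- the induced adjunction between $\widetilde{S}^\bullet_a \leftmod$ and $S^\bullet_a \leftmod$ is a symmetric monoidal Quillen equivalence. For the cellularised version I would then apply the Cellularization Principle, \cite[Proposition 2.7]{gscell}: the cells $K_a$ are by definition the derived images of $K_{\widetilde{a}}$ under the left adjoint, so the derived unit is a weak equivalence on $K_{\widetilde{a}}$ because the uncellularised adjunction is already a Quillen equivalence, and all of the cells in sight are cofibrant and homotopically small, this last property having been transported along the chain of Quillen equivalences from the topological cells $K_{\Top}$ that were shown to be homotopically small in Subsection~\ref{subsec:separate}. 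The Cellularization Principle then delivers the Quillen equivalence between $K_{\widetilde{a}} \cell \widetilde{S}^\bullet_a \leftmod$ and $K_a \cell S^\bullet_a \leftmod$.

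The only genuinely non-formal ingredient is Lemma~\ref{zamiastLocBedzieInnyPierscien} itself, which identifies localising $\cO_\cF \leftmod$ at $\Sigma^* j$ with extension of scalars to $\cE^{-1}\cO_\cF$ and rests on $\cE^{-1}\cO_\cF$ being a homotopically small generator of its own module category and on the counit $\cE^{-1}\cO_\cF \otimes_{\cO_\cF} \cE^{-1}\cO_\cF \to \cE^{-1}\cO_\cF$ being an isomorphism; since that is already in hand, I expect no real obstacle here. The one point that deserves a moment's care is confirming that homotopical smallness of the cells truly survives the passage into the algebraic diagram categories, which it does precisely because every functor appearing in the relevant zig-zag is the left adjoint of a Quillen equivalence and hence preserves homotopically small objects at the level of homotopy categories.
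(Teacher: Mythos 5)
Your proposal is correct and follows essentially the same route as the paper: the proposition is obtained by assembling the identity adjunctions on the outer vertices with the middle Quillen equivalence of Lemma~\ref{zamiastLocBedzieInnyPierscien} (using the identification $L_{A''}(\cO_\cF\leftmod)=L_{\Sigma^* j}(\cO_\cF\leftmod)$ established beforehand) into a map of $\pscr$--diagrams, invoking Lemma~\ref{lemma:QEondiag} and its monoidal refinement, and then cellularizing via the Cellularization Principle with the cells' cofibrancy and homotopical smallness transported along the preceding Quillen equivalences. No gaps.
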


Again the adjunction at the level of cellularized categories is a symmetric monoidal Quillen equivalence by discussion in Section \ref{sec:application}.


\subsection{Removing the cellularization}\label{subsec:torsion}

We now compare $K_a  \cell S_a^\bullet \leftmod$
and the algebraic model $d \acal_{dual}$
of Section \ref{sec:cyclicmodel}. The point is to move
from a category whose weak equivalences are
quite complicated to define to a model category
whose weak equivalences are the quasi-isomorphisms. The idea behind this step is similar to one presented in Section 12 and 13 of \cite{tnqcore}.

We first introduce an adjoint pair relating
$S_a^\bullet \leftmod$ and
$d \hat{\acal}$.
An object
\[
\beta \co M \lra \ecal^{-1} \ocal_\fcal \otimes V
\]
of $d \hat{\acal}$ gives
an object of $S_a^\bullet \leftmod$
defined by
\[
(M, \ecal^{-1}\beta, \ecal^{-1} \ocal_\fcal \otimes V, \id , V)
\]
This functor, which we call $k^*$, includes $d \hat{\acal}$ into
$S_a^\bullet \leftmod$.
It has a right adjoint $\Gamma_v$.
Let $(a, \alpha, b, \gamma, c)$ be an object of
$S_a^\bullet \leftmod$. Then we can draw the diagram
of $\ocal_\fcal$--modules
\[
a \lra \ecal^{-1} \ocal_\fcal \otimes_{\ocal_\fcal} a
\lra b \longleftarrow \ecal^{-1} \ocal_\fcal \otimes c.\]
If we take the pullback  $P$ of this in the category of
$\ocal_\fcal$--modules in $\ch(\qq)$ we obtain a map
$\delta \co P \to \ecal^{-1} \ocal_\fcal \otimes c$.
This map $\delta$ is an object of $d \hat{\acal}$.
For more details see
\cite[Section 7]{gre12standardalgebra}.
We call this adjoint pair $(k^*, \Gamma_v)$ and we note that it is a
strong symmetric monoidal adjunction.

We can compose this adjunction with the adjunction
$(j^*, \Gamma_h)$ which relates $d \hat{\acal}$
to $d \acal$ (see Section \ref{subsec:adjunctions}).
We let $l^*= k^* \circ j^*$ and $\Gamma = \Gamma_h \circ \Gamma_v$.

\begin{lemma}
There is a symmetric monoidal adjunction $(l^*, \Gamma)$ between
$d \acal$ and $S_a^\bullet \leftmod$.
\end{lemma}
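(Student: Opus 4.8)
The plan is to produce $(l^*,\Gamma)$ as the composite of the two adjunctions already in hand: $(j^*,\Gamma_h)$ between $d\acal$ and $d\hat{\acal}$, and $(k^*,\Gamma_v)$ between $d\hat{\acal}$ and $S_a^\bullet \leftmod$. Since the composite of two adjunctions is again an adjunction, the pair $(l^*,\Gamma)=(k^*\circ j^*,\ \Gamma_h\circ\Gamma_v)$ is an adjunction between $d\acal$ and $S_a^\bullet \leftmod$. To promote this to a symmetric monoidal adjunction it suffices, by the doctrinal adjunction principle, to show that the left adjoint $l^*$ is strong symmetric monoidal: the right adjoint $\Gamma$ then acquires a canonical lax symmetric monoidal structure for which the unit and counit are monoidal transformations.

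Next I would reduce the problem to a statement about $j^*$ alone. The adjunction $(k^*,\Gamma_v)$ was noted above to be strong symmetric monoidal, so $k^*$ is strong symmetric monoidal; and a composite of strong symmetric monoidal functors is strong symmetric monoidal. Hence $l^*=k^*\circ j^*$ is strong symmetric monoidal as soon as $j^*\colon d\acal \to d\hat{\acal}$ is.

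Finally I would check that $j^*$ is strong symmetric monoidal, which is the only point carrying any content. The functor $j^*$ is the full inclusion, the monoidal unit $S^0=(\ocal_\fcal\to\ecal^{-1}\ocal_\fcal\otimes\qq)$ lies in $d\acal$, and the tensor product on $d\hat{\acal}$ is given by the same formula $\beta\otimes\beta'\colon N\otimes_{\ocal_\fcal}N'\to\ecal^{-1}\ocal_\fcal\otimes(U\otimes_\qq U')$ used to define the tensor product on $d\acal$. Thus the only thing to verify is that the tensor product of two objects of $d\acal$, a priori only an object of $d\hat{\acal}$, lands in the subcategory $d\acal$, i.e. that the structure map of $A\otimes A'$ becomes an isomorphism after inverting $\ecal$. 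This holds because the localization $\ecal^{-1}(-)$ on $\ocal_\fcal$--modules is exact and strong symmetric monoidal, so $\ecal^{-1}(\beta\otimes\beta')$ is identified with $\ecal^{-1}\beta\otimes_{\ecal^{-1}\ocal_\fcal}\ecal^{-1}\beta'$, an isomorphism since $\ecal^{-1}\beta$ and $\ecal^{-1}\beta'$ are. Therefore $j^*$ strictly preserves unit and tensor, hence is strong symmetric monoidal, and the lemma follows. The main obstacle, which in this instance is mild, is precisely this last verification that the tensor product stays inside $d\acal$; the rest is a formal consequence of composing symmetric monoidal adjunctions.
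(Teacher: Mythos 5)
Your proposal is correct and follows essentially the same route as the paper, which obtains $(l^*,\Gamma)$ as the composite of the strong symmetric monoidal adjunction $(k^*,\Gamma_v)$ with $(j^*,\Gamma_h)$ and delegates the remaining details to \cite[Section 7]{barnesmonoidal}. The only content you add is the explicit check that $j^*$ is strong symmetric monoidal, i.e.\ that $\ecal^{-1}(\beta\otimes\beta')\cong\ecal^{-1}\beta\otimes_{\ecal^{-1}\ocal_\fcal}\ecal^{-1}\beta'$ so the tensor product of objects of $d\acal$ stays in $d\acal$; this is exactly the point the paper leaves implicit, and your argument for it is sound.
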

This adjunction is also studied in \cite[Section 7]{barnesmonoidal}
where it is called $(\textrm{inc}, \Gamma)$ and $S_a^\bullet $ is called
$R_a^\bullet$.

Recall, that up to a weak equivalence (and ignoring shifts), the cells $K_{\Top}$
consist of objects of the form
\[
S^\bullet \smashprod k =
\big(
k \smashprod DE \fcal_+
\lra
k \smashprod DE \fcal_+ \smashprod \widetilde{E} \fcal
\longleftarrow
k \smashprod \widetilde{E} \fcal
\big) .
\]
where $k \in K$, i.e. $k=S^0$ or $k=\sigma_n$ for $n>1$ (see Section \ref{subsec:separate}).

Thus we have to calculate the cells in $K_a$, i.e. the derived images  of cells from $K$ (or equivalently from $K_{\Top}$) in $S_a^\bullet \leftmod$. Since all required Quillen equivalences are symmetric monoidal (which follows from Section \ref{sec:monoidal}), they preserve the unit (up to weak equivalence), and the unit is always cellular.  So the derived image of $S^0$ is the unit in $S_a^\bullet \leftmod$:
\[
\ocal_\fcal \lra \ecal^{-1} \ocal_\fcal \longleftarrow \qq.
\]
We will use a simplified notation $S^0$ for it.
As for the other cells, consider some
$S^\bullet \smashprod \sigma_n \in K_{\Top}$.
One takes homotopy $\torus$--fixed points to get an object of
$K_{\Top}^\torus$ and then one applies all the derived functors of the fourth author
\cite{shiHZ}, to get an object of $K_{t}$. Finally one applies
a number of algebraic adjunctions to get an object of $K_a$.
Let the image of $S^\bullet \smashprod \sigma_n$ under these derived adjunctions
be $k_n = (A \to B \leftarrow C)$.
All of these adjunctions are constructed by taking Quillen equivalences
(which preserve the unit up to weak equivalence) on each of the component categories.
It follows that we have isomorphisms as below.
\[
\h_*(A)=
[\ocal_\fcal, A]^{\ocal_\fcal \leftmod}_*
\cong
[DE \fcal_+, DE \fcal_+ \smashprod \sigma_n]^{DE \fcal_+ \leftmod}_*
\cong
[S^0, DE \fcal_+ \smashprod \sigma_n]^{\torus}_*
\]
Similar isomorphisms also hold for the other two components, so by the calculations
of \cite[Example 5.8.1]{gre99}, we have
\[
\begin{array}{rcl}
\h_*(A)=& \pi_*^\torus(DE \fcal_+ \smashprod \sigma_n) & = \qq_n \langle 1 \rangle \\
\h_*(B)=& \pi_*^\torus(DE \fcal_+ \smashprod \widetilde{E} \fcal \smashprod \sigma_n) & = 0\\
\h_*(C)=& \pi_*^\torus(\widetilde{E} \fcal \smashprod \sigma_n) & = 0 
\end{array}
\]
where $\qq_n \langle 1 \rangle$ is the torsion $\ocal_\fcal$-module consisting of a copy of $\qq$ in factor $n$ and degree 1.
It is immediate that there is a homology isomorphism
\[
\widetilde{\sigma}_n= (\bQ_n\langle 1\rangle \lra 0 \longleftarrow 0)
\longrightarrow 
(A \to B \leftarrow C) = k_n
\]
given by simply picking a suitable representative cycle for $1 \in \qq_n \langle 1 \rangle$.
We therefore have the following description of the cells. 
\begin{lemma}
The set of cells $K_a$ is given (up to weak equivalence) by 
all shifts of objects of the form 
$\widetilde{\sigma}_n$ for $n \geqslant 1$
and all shifts of 
$S^0=(\ocal_\fcal \lra \ecal^{-1} \ocal_\fcal \longleftarrow \qq)$.
\end{lemma}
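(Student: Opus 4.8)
The plan is to push each generator of the set $K$ through the zig--zag of symmetric monoidal Quillen equivalences built in Sections~\ref{sec:cyclic} and~\ref{subsec:ramod}, compute the homology of the diagram it lands on in $S_a^\bullet\leftmod$, and then replace it by a small quasi--isomorphic representative. Recall that $K$ consists of $S^0$ and the objects $\sigma_n$ together with all their shifts, and that $K_a$ is by definition the set of derived images of these in $S_a^\bullet\leftmod$.

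First I would dispose of the unit. The object $S^0\in K$ is the monoidal unit of $L_{S_\bQ}\TSP$, and every functor in the zig--zag is (the left adjoint of) a symmetric monoidal Quillen equivalence, so it carries the cofibrant unit to an object weakly equivalent to the unit of its target. Hence the derived image of $S^0$ is weakly equivalent to the unit $S^0=(\ocal_\fcal\to\ecal^{-1}\ocal_\fcal\leftarrow\qq)$ of $S_a^\bullet\leftmod$, and its shifts are cells.

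Next I would compute the derived image $k_n=(A\to B\leftarrow C)$ of $S^\bullet\smashprod\sigma_n$. Each functor in the zig--zag is assembled vertexwise from Quillen equivalences, so under the chain--level identifications supplied by \cite{shiHZ} the homology of a vertex of $k_n$ is the graded group of maps out of the corresponding unit; tracing this back along the equivalences identifies it with a group of maps in $\TSP$. Explicitly, $\h_*(A)\cong[\ocal_\fcal,A]^{\ocal_\fcal\leftmod}_*\cong[DE\fcal_+,DE\fcal_+\smashprod\sigma_n]^{DE\fcal_+\leftmod}_*\cong[S^0,DE\fcal_+\smashprod\sigma_n]^\torus_*=\pi_*^\torus(DE\fcal_+\smashprod\sigma_n)$, and similarly for $B$ and $C$ with $DE\fcal_+\smashprod\widetilde{E}\fcal\smashprod\sigma_n$ and $\widetilde{E}\fcal\smashprod\sigma_n$ in place of $DE\fcal_+\smashprod\sigma_n$. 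The computation of \cite[Example~5.8.1]{gre99} then gives $\h_*(A)=\bQ_n\langle1\rangle$ and $\h_*(B)=\h_*(C)=0$. Since a complex of $\ocal_\fcal$--modules with homology concentrated in a single degree is quasi--isomorphic to that homology, $A$ is quasi--isomorphic, as an $\ocal_\fcal$--module in $\ch(\qq)$, to $\bQ_n\langle1\rangle$; together with $B\simeq 0\simeq C$ this yields an objectwise quasi--isomorphism between $k_n$ and $\widetilde\sigma_n=(\bQ_n\langle1\rangle\to0\leftarrow0)$ (either through a short zig--zag, or directly via a well--chosen representing cycle, using that $\ecal^{-1}\ocal_\fcal\otimes\bQ_n\langle1\rangle=0$ so the squares of structure maps commute automatically). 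As weak equivalences in $S_a^\bullet\leftmod$ are defined objectwise, and cellularization only enlarges the class of weak equivalences, this is a weak equivalence in $K_a\cell S_a^\bullet\leftmod$. Running over all $n$ and all shifts gives the asserted description of $K_a$.

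The main obstacle is the bookkeeping behind the displayed chain of isomorphisms: one has to check that each stage of the zig--zag --- forming $\pscr$--diagrams, applying $(-)^\torus$, the $\bP/\bU$ and $H\bQ\smashprod-$ functors, the comparison functors of \cite{shiHZ}, and extension/restriction of scalars along $\alpha$, $\beta$ and $j$ --- preserves the relevant mapping groups and carries $S^\bullet\smashprod\sigma_n$ to an object whose homotopy or homology agrees with that of the previous stage. This is routine, because each such functor is part of a Quillen equivalence whose derived unit and counit are isomorphisms and each preserves maps out of the pertinent unit; the one genuinely non--formal ingredient is Greenlees' computation of $\pi_*^\torus(DE\fcal_+\smashprod\sigma_n)$, together with the vanishing $\pi_*^\torus(\widetilde{E}\fcal\smashprod\sigma_n)=0$, which reflects that $\sigma_n$ is supported over the finite subgroup $C_n$.
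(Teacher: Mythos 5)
Your proposal is correct and follows essentially the same route as the paper: the unit is preserved because all the Quillen equivalences are symmetric monoidal, and for $\sigma_n$ one identifies $\h_*(A)$, $\h_*(B)$, $\h_*(C)$ with $\pi_*^\torus$ of the corresponding smash products via the chain of isomorphisms through the units, invokes \cite[Example 5.8.1]{gre99}, and then produces the homology isomorphism $\widetilde{\sigma}_n \to k_n$ by choosing a representative cycle. Your extra remark that the structure-map squares commute automatically because $\ecal^{-1}\ocal_\fcal \otimes \bQ_n\langle 1\rangle = 0$ is a useful (if minor) elaboration of the paper's ``it is immediate.''
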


The above argument on the behaviour of the derived adjunction 
extends to the following useful result, which tells us that 
(after applying homology) our derived functors agree with the functor 
$\pi_*^\acal$ of \cite{gre99}.

\begin{theorem}
Let $X$ be rational $\torus$--equivariant spectrum. 
Let $\Upsilon X$ be its image in $S_a^\bullet \leftmod$.
Then $\h_*(\Upsilon X) \cong l^* \pi_*^\acal(X)$. 
\end{theorem}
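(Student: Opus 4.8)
The plan is to extend to an arbitrary $X$ the computation carried out above for the cells $\widetilde\sigma_n$ and $S^0$. Choose a cofibrant model of $X$ in $L_{S_\bQ}\TSP$ and push it down the zig--zag of the introductory diagram, keeping track of all three components of the resulting $S_a^\bullet$--module $\Upsilon X=(a\to b\leftarrow c)$. After applying $S^\bullet\smashprod-$ and replacing fibrantly in the (localised) diagram $S^\bullet\leftmod$, the three components become, up to weak equivalence, $DE\fcal_+\smashprod X$, $DE\fcal_+\smashprod\widetilde{E}\fcal\smashprod X$ and $\widetilde{E}\fcal\smashprod X$, with the structure maps $a\to b$ and $c\to b$ given (up to homotopy) by smashing $X\smashprod DE\fcal_+$ with $S^0\to\widetilde{E}\fcal$ and by smashing $\widetilde{E}\fcal\smashprod X$ with the unit $S^0\to DE\fcal_+$ respectively. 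Each further functor in the diagram --- the derived fixed--point functor $(-)^\torus$, the passage $\bU$ to symmetric spectra and $H\bQ\smashprod-$ to $H\bQ$--modules, the algebraicisation functor $\Theta$ of \cite{shiHZ}, the formality zig--zag $S_t\simeq\widetilde S_t\simeq\ocal_\fcal$, and the final base change $\cE^{-1}\cO_\cF\otimes_{\cO_\cF}-$ on the middle vertex --- is the derived functor of a Quillen equivalence, so on homology it is computed by a composite of isomorphisms of graded mapping groups, exactly as in the cell computation.

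Running these identifications, the first component gives
\[
\h_*(a)\cong[DE\fcal_+^\torus,(DE\fcal_+\smashprod X)^\torus]^{DE\fcal_+^\torus\leftmod}_*\cong[DE\fcal_+,DE\fcal_+\smashprod X]^{DE\fcal_+\leftmod}_*\cong\pi_*^\torus(X\smashprod DE\fcal_+),
\]
using that $\Theta$ carries $\pi_*$ to $\h_*$, that the formality maps are homology isomorphisms, and that $a_\#\dashv(-)^\torus$ of \cite{gsfixed} sends the unit $DE\fcal_+^\torus$ to $DE\fcal_+$. The same chain applied to the third component, together with the identification of the derived right adjoint at the non--equivariant vertex with the geometric fixed point functor (as in the proof of the corresponding proposition above), gives $\h_*(c)\cong\pi_*(\Phi^\torus X)$. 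For the middle component one obtains $\pi_*^\torus(X\smashprod DE\fcal_+\smashprod\widetilde{E}\fcal)$; since this is already an $\ecal^{-1}\ocal_\fcal$--module, the exact functor $\cE^{-1}\cO_\cF\otimes_{\cO_\cF}-$ leaves it unchanged (as in the proof of Lemma~\ref{zamiastLocBedzieInnyPierscien}), and by the isomorphism recalled in the definition of $\pi_*^\acal$ it equals $\ecal^{-1}\ocal_\fcal\otimes\pi_*(\Phi^\torus X)$. Tracing the two structure maps through the same isomorphisms, $a\to b$ becomes $\pi_*^\torus(X\smashprod DE\fcal_+)\to\pi_*^\torus(X\smashprod DE\fcal_+\smashprod\widetilde{E}\fcal)$, i.e.\ the structure map $\beta$ of $\pi_*^\acal(X)$, and $c\to b$ becomes the canonical map $\pi_*(\Phi^\torus X)\to\ecal^{-1}\ocal_\fcal\otimes\pi_*(\Phi^\torus X)$, $v\mapsto 1\otimes v$. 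Since $l^*=k^*\circ j^*$ is by definition given on $\pi_*^\acal(X)$ by
\[
l^*\pi_*^\acal(X)=\big(\pi_*^\torus(X\smashprod DE\fcal_+),\ \ecal^{-1}\beta,\ \ecal^{-1}\ocal_\fcal\otimes\pi_*(\Phi^\torus X),\ \id,\ \pi_*(\Phi^\torus X)\big),
\]
this produces the desired isomorphism of $S_a^\bullet$--modules.

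The substantive issue is not a new idea but bookkeeping: one must verify that the mapping--group isomorphisms produced by the successive Quillen equivalences are natural in $X$ and compatible with the maps of the $\pscr$--diagram, so that they assemble into a genuine morphism of $S_a^\bullet$--modules; and one must keep careful track of where rationalisation, $\widetilde{E}\fcal$--localisation and $\ecal^{-1}$--inversion have already been carried out. A more structural alternative is to observe that $X\mapsto\h_*(\Upsilon X)$ and $X\mapsto l^*\pi_*^\acal(X)$ are both coproduct--preserving homological functors on $\ho(L_{S_\bQ}\TSP)$ that agree on the compact generators $S^0$ and $\sigma_n$ by the preceding lemma, hence agree in general; this bypasses the diagram chase at the cost of invoking from \cite{gre99} that $\pi_*^\acal$ is such a functor.
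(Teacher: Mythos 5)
Your proposal is correct and is exactly the argument the paper intends: the paper offers no separate proof beyond the remark that "the above argument on the behaviour of the derived adjunction extends," i.e.\ one runs the same chain of mapping-group isomorphisms used for the cells $S^0$ and $\sigma_n$ on each vertex of $\Upsilon X$ and matches the result against the definition of $\pi_*^\acal(X)$ and of $l^*=k^*\circ j^*$. Your write-up simply fills in the details (and correctly flags the naturality bookkeeping as the only substantive point to check); the closing alternative via homological functors would additionally require a natural comparison map, but it is offered only as an aside.
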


The adjunction $(l^*, \Gamma)$ is shown to be
a symmetric monoidal Quillen equivalence between
$d \acal$ with the dualizable model structure and a cellularization of $S_a^\bullet \leftmod$
in \cite[Theorem 7.6]{barnesmonoidal}.
The cells for this cellularization are taken to be
the 'algebraic spheres'.
An algebraic sphere is an object of the form
\[
S^{\nu} = (\ocal_\fcal(\nu) \lra \ecal^{-1} \ocal_\fcal \otimes \qq \longleftarrow \bQ)
\]
where $\ocal_\fcal(\nu)$ is the subset of $\ecal^{-1} \ocal_\fcal$
consisting of all those $x$ such that $c^\nu x \in \ocal_\fcal$,
for $\nu \co \fcal \lra \zz_{\geqslant 0}$ of finite support.
We also allow negative spheres $S^{-\nu}$ and shifts of such objects.
Essentially these are just `partial shifts' of the unit where we have shifted
finitely many factors of $\ocal_\fcal$ by some varying amount.
We let $\{ S^\nu \}$ denote the set of such objects. 

To show that $(l^*,\Gamma)$ is a Quillen equivalence between $d \acal$ with the dualizable model structure and the cellularization of $S_a^\bullet \leftmod$ at the set of cells $K_a$, 
we want to use  \cite[Theorem 7.6]{barnesmonoidal} which says that 
$d \acal$ with the dualizable model structure is Quillen equivalent to the cellularization of $S_a^\bullet \leftmod$ at the set of cells $\{ S^\nu \}$. 
Hence, it is enough to show that these two cellularizations agree 
(that is, produce the same model structure). 
We will prove that the algebraic spheres can be built via cofibre sequences and coproducts in $S_a^\bullet \leftmod$ from cells in $K_a$ and vice-versa. 
It will follow that the class of $K_a$--cellular objects equals the class of 
$\{S^{\nu}\}$--cellular objects. Hence we will see that the 
$K_a$--cellular equivalences and the $\{S^{\nu}\} $--cellular equivalences agree
and that the model categories $K_a \cell S_a^\bullet \leftmod$ and  
$\{S^{\nu}\} \cell S_a^\bullet \leftmod$ are equal. 

The unit $S^0$ (and all its suspensions) is in both sets: in $K_a$ and in the set of `algebraic spheres'. So consider the algebraic sphere $S^{\nu_1}$ for the function $\nu_1: \fcal \lra \zz_{\geqslant 0}$ sending a trivial subgroup to $1$ and all other subgroups to $0$.
There is a cofibre sequence (in $S_a^\bullet \leftmod$):
\[
S^0 \lra S^{\nu_1} \lra \Sigma \sigma_1
\]
where $\Sigma$ denotes the suspension.
This shows that we can build $S^{\nu_1}$ from $\sigma_1$ and  $S^0$ and that we can build 
$\sigma_1$ from algebraic spheres. 
We can also create the negative sphere $S^{-\nu_1}$ using the cofibre sequence:
\[
S^{-\nu_1} \lra S^0 \lra \Sigma^{-1}\sigma_1.
\]
To build any algebraic sphere we apply the above argument repeatedly,  
note that by the definition of an algebraic sphere we need only finitely many steps. 
Equally we can make all $\sigma_i$ for $i \geqslant 1$ from the algebraic spheres.

By \cite[Theorem 7.6]{barnesmonoidal} we have the following. 

\begin{proposition}\label{prop:conclusion}
The pair $(l^*,\Gamma)$ induces a
symmetric monoidal Quillen equivalence between
the model categories
$d \acal_{dual}$ and
$K_a \cell S_a^\bullet \leftmod$.
\end{proposition}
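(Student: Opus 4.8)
The plan is to obtain this as a formal consequence of \cite[Theorem 7.6]{barnesmonoidal} together with the cell-comparison carried out just above the statement. Recall that \cite[Theorem 7.6]{barnesmonoidal} already supplies a symmetric monoidal Quillen equivalence between $d \acal_{dual}$ and the cellularization $\{S^\nu\} \cell S_a^\bullet \leftmod$ of $S_a^\bullet \leftmod$ at the algebraic spheres. Thus the entire content of the proposition is the assertion that
\[
K_a \cell S_a^\bullet \leftmod = \{ S^\nu \} \cell S_a^\bullet \leftmod
\]
as model categories; once this equality of model structures is established, both the Quillen equivalence and its symmetric monoidal refinement transfer along it verbatim, since the underlying monoidal model category $S_a^\bullet \leftmod$ is the same in both cases.

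To prove the equality I would first note that $S_a^\bullet \leftmod$ is a cellular, proper, stable monoidal model category, and that both $K_a$ and $\{S^\nu\}$ consist of homotopically small cofibrant objects: for $\{S^\nu\}$ this is part of \cite[Theorem 7.6]{barnesmonoidal}, and for $K_a$ it holds because homotopical smallness is preserved under the chain of Quillen equivalences used to define $K_a$ from $K_{\Top}$. Hence both right Bousfield localizations exist. The key general fact is that for a stable model category the cellularization at a set of homotopically small objects depends only on the localizing subcategory of the homotopy category they generate (equivalently, on the class of cellular, or colocal, objects), so it suffices to check that $K_a$ and $\{S^\nu\}$ generate the same localizing subcategory of $\ho(S_a^\bullet \leftmod)$. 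Up to weak equivalence $K_a$ consists of all shifts of the unit $S^0 = (\ocal_\fcal \to \ecal^{-1}\ocal_\fcal \leftarrow \qq)$ and of all shifts of $\widetilde{\sigma}_n = (\qq_n\langle 1\rangle \to 0 \leftarrow 0)$, while $\{S^\nu\}$ contains $S^0$ and the algebraic spheres $S^{\pm\nu}$. The unit lies in both. Using the cofibre sequences
\[
S^0 \lra S^{\nu_1} \lra \Sigma \sigma_1 \qquad\text{and}\qquad S^{-\nu_1} \lra S^0 \lra \Sigma^{-1}\sigma_1
\]
in $S_a^\bullet \leftmod$, together with their evident analogues at each finite subgroup, each $\sigma_n$ (hence each $\widetilde{\sigma}_n$, to which it is weakly equivalent) is built from $S^0$ and algebraic spheres by a single triangle, and conversely each algebraic sphere $S^{\pm\nu}$ --- which by definition differs from the unit in only finitely many factors, each by a finite shift --- is built from $S^0$ and finitely many $\sigma_i$ by iterating such triangles. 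Therefore $K_a$ and $\{S^\nu\}$ generate the same localizing subcategory, the two classes of cellular objects coincide, and so do the cellular equivalences and the resulting model structures.

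The main obstacle is this cellularization-comparison step, and specifically the need to justify carefully that ``building one set of generators from the other by cofibre sequences and coproducts'' genuinely forces the two right Bousfield localizations to agree. This requires the structural fact that the colocal objects --- and hence the cellular equivalences --- of a cellularized stable model category are determined by the triangulated subcategory generated by the cells, and it requires checking that the cofibre sequences displayed above are genuine homotopy cofibre sequences in $S_a^\bullet \leftmod$ whose terms are all homotopically small. Once these points are in place, the proposition follows immediately by substituting the identification $K_a \cell S_a^\bullet \leftmod = \{S^\nu\} \cell S_a^\bullet \leftmod$ into \cite[Theorem 7.6]{barnesmonoidal}, the symmetric monoidal statement being inherited since the monoidal model category $S_a^\bullet \leftmod$ and the adjunction $(l^*,\Gamma)$ are unchanged.
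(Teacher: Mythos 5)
Your proposal is correct and follows essentially the same route as the paper: reduce to \cite[Theorem 7.6]{barnesmonoidal} by showing $K_a \cell S_a^\bullet \leftmod$ and $\{S^\nu\} \cell S_a^\bullet \leftmod$ coincide, which is done by building each set of cells from the other via the cofibre sequences $S^0 \to S^{\nu_1} \to \Sigma\sigma_1$ and $S^{-\nu_1} \to S^0 \to \Sigma^{-1}\sigma_1$ iterated finitely often. Your explicit appeal to the fact that cellularization at homotopically small cells depends only on the localizing subcategory they generate is exactly the (slightly more implicit) justification the paper uses for concluding that the cellular objects, cellular equivalences, and hence model structures agree.
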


This finishes the proof that $d \acal_{dual}$ provides an algebraic model for the category of rational $\torus$--spectra. We leave the consideration that all our Quillen equivalences are in fact symmetric monoidal to the last section.

\section{Symmetric monoidal equivalences}\label{sec:monoidal}

All of the adjunctions in the zig-zag between
$d \acal_{dual}$ and $\TSP$ have been compatible with the monoidal properties
of the categories.
By examining the cellularized model structures more clearly
we are able to show that each of these model categories
are proper, stable, cellular
monoidal model categories that satisfy
the monoid axiom.
We are thus able to conclude that this zig-zag of Quillen equivalences 
consists of \emph{monoidal} Quillen equivalences. It follows
that we also have Quillen equivalences of model categories of 
ring objects and modules over ring objects.

Our method is to prove a monoidal version of the
cellularization principle, \cite[Proposition 2.7]{gscell},
see Propositions \ref{prop:monoidalleft} and \ref{prop:monoidalright}.

\subsection{Cellularization of stable model categories}\label{subsec:cellularization}

A cellularization of a model category
is a right Bousfield localization at a set of objects.
Such a localization exists by \cite[Theorem 5.1.1]{hir03}
whenever the model category is right proper and cellular.
When we are in a stable context the results of \cite{brstable}
can be used.

Those results, which we shall introduce in the next subsection,
allow us to understand the sets of generating cofibrations
for our cellularized model categories and
see that they are all symmetric monoidal and cellular.

In this subsection we recall the notion of cellularization (when $\ccal$ is stable)
and some of basic definitions and results.

\begin{definition}
Let $\ccal$ be a stable model category and $K$ a stable set of objects of $\ccal$, i.e. the class of $K$--cellular objects of $\ccal$ is closed under desuspension.\footnote{Note that the class is always closed under suspension.}
We say that a map $f \co A \longrightarrow B$ of $\ccal$ is a \textbf{$K$--cellular equivalence} if
the induced map
\[
[k,f]^\ccal_*: [k,A]^\ccal_* \longrightarrow [k,B]^\ccal_*
\]
is an isomorphism of graded abelian groups for each $k \in K$. An object $Z \in \ccal$ is said to be
\textbf{$K$--cellular} if
\[
[Z,f]^\ccal_*: [Z,A]^\ccal_* \longrightarrow [Z,B]^\ccal_*
\]
is an isomorphism of graded abelian groups for any $K$--cellular equivalence $f$.
\end{definition}

\begin{definition}
A \textbf{right Bousfield localization} or \textbf{cellularization} of $\ccal$ with respect to
a set of objects $K$ is a model structure $K \cell \ccal$ on $\ccal$ such that
\begin{itemize}[noitemsep]
\item the weak equivalences are $K$--cellular equivalences
\item the fibrations of $K \cell \ccal$ are the fibrations of $\ccal$
\item the cofibrations of $K \cell \ccal$ are defined via left lifting property.
\end{itemize}
\end{definition}

By \cite[Theorem 5.1.1]{hir03}, if $\ccal$ is a right proper, cellular model category
and $K$ a set of objects in $\ccal$, then the cellularization of $\ccal$ with respect to $K$, $K \cell \ccal$,
exists and is a right proper model category.
The cofibrant objects of $K \cell \ccal$
are called \textbf{$K$--cofibrant} and are precisely the
$K$--cellular and cofibrant objects of $\ccal$.

We recall some definitions and results from \cite{brstable} and prove our monoidal
version of the cellularization principle.

\begin{definition}
Let $K$ be a set of cofibrant objects in a monoidal model category $\ccal$.
We say that $K$ is \textbf{monoidal} if the following two conditions hold.
\begin{itemize}[noitemsep]
\item Any object of the form $k \otimes k'$,
for $k, k' \in K$, is $K$--cellular.
\item For $\cofrep_K S_\ccal$ a $K$--cofibrant replacement of the
unit $S_\ccal$ of $\ccal$ and any $k \in K$, the map
$\cofrep_K S_\ccal \otimes k \to k$ is a $K$--cellular equivalence.
\end{itemize}
\end{definition}
The second condition holds automatically if the unit of $\ccal$ is
$K$--cellular.

The cellularization of a right proper, cellular, stable model category at a stable set of cofibrant objects $K$ is very well behaved (see \cite[Theorem 5.9]{brstable}), in particular it is proper, cellular and stable.

There is another important property we will often want the cells to satisfy,
which makes right localization behave in an even more tractable manner,
see \cite[Section 9]{brstable}. This property is variously called
small, compact or finite. We choose to call it
homotopically small to try and avoid those over-used terms.
\begin{definition}\label{def:hocompact}
We say that an object $X$ of a stable model category $\ccal$
is \textbf{homotopically small}
if in the homotopy category
$[X, \coprod_i Y_i]^\ccal$ is canonically isomorphic to
$\oplus_i [X, Y_i]^\ccal$,
see \cite[Definition 2.1.2]{ss03stabmodcat}.
\end{definition}

Using \cite[Lemma 2.2.1]{ss03stabmodcat}
it is routine to check that if $K$ consists of
homotopically small objects
of $\ccal$ then $K$ is a set of generators for $K \cell \ccal$.
Hence we know a set of generators for each of our cellularizations.

Notice, that derived functors of both left and right Quillen equivalences preserve homotopically small objects. Now we may turn to monoidal considerations. The following theorem is
\cite[Theorem 7.2]{brstable}.

\begin{theorem}
Let $\ccal$ be a proper, monoidal, cellular, stable model category.
Let $K$ be a monoidal and stable set of cofibrant objects of
$\ccal$.
Then $K \cell \ccal$ is a proper, monoidal, cellular, stable model category.
Furthermore, if $\ccal$ satisfies the monoid axiom
then so does $K \cell \ccal$.
\end{theorem}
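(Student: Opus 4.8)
The plan is to derive everything from two structural facts about a right Bousfield localization, together with the two clauses of the hypothesis that $K$ is monoidal. Recall that $K\cell\ccal$ has the same fibrations as $\ccal$, so its acyclic cofibrations are \emph{exactly} the acyclic cofibrations of $\ccal$, while its cofibrations form a subclass of the cofibrations of $\ccal$. Properness, cellularity and stability of $K\cell\ccal$ are \cite[Theorem 5.9]{brstable}; in particular $K\cell\ccal$ is cofibrantly generated, and I would use its generating cofibrations $I_K$ in the form provided there, with the domain and codomain of each element $K$--cellular and cofibrant in $\ccal$ (briefly, \emph{$K$--cofibrant}). The second fact I would establish first is that a $K$--cellular equivalence between $K$--cofibrant objects is automatically a weak equivalence of $\ccal$: its cofibre is $K$--cellular (a homotopy colimit of $K$--cofibrant objects) and $K$--acyclic, and a cellular induction on $[-,-]^\ccal_*$ shows that such an object is weakly equivalent to $0$. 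I also record that the class of $K$--cellular objects is closed under (de)suspension (here the hypothesis that $K$ is a \emph{stable} set is used) and under the homotopy pushouts and transfinite compositions occurring in $K$--cell attachments, and that these homotopy colimits agree whether computed in $\ccal$ or in $K\cell\ccal$.

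Granting these, several parts are formal. The monoid axiom transfers: the maps $(\text{acyclic cofibrations of }K\cell\ccal)\otimes(K\cell\ccal)$ coincide with $(\text{acyclic cofibrations of }\ccal)\otimes\ccal$, so the class they generate under pushout and transfinite composition is unchanged, and by the monoid axiom for $\ccal$ it consists of weak equivalences of $\ccal$, hence of $K$--cellular equivalences. The easy half of the pushout--product axiom is similar: if $i$ is an acyclic cofibration of $K\cell\ccal$, equivalently of $\ccal$, and $j$ is a cofibration of $K\cell\ccal$, hence of $\ccal$, then the pushout--product $i\square j$ is an acyclic cofibration of $\ccal$ by the pushout--product axiom for $\ccal$, so it is an acyclic cofibration of $K\cell\ccal$. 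For the unit axiom, let $\cofrep_K S_\ccal \to S_\ccal$ be a $K$--cofibrant replacement of the unit. By the second clause of the monoidality of $K$ the map $\cofrep_K S_\ccal\otimes k \to k$ is a $K$--cellular equivalence for $k\in K$; since $\cofrep_K S_\ccal\otimes -$ is left Quillen on $\ccal$ and hence preserves (de)suspensions, homotopy pushouts and transfinite compositions, and since $K\cell\ccal$ is left proper, a cellular induction extends this to the statement that $\cofrep_K S_\ccal\otimes X \to X$ is a $K$--cellular equivalence for every $K$--cellular $X$, in particular for every cofibrant object of $K\cell\ccal$.

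The real content, and the step I expect to be the main obstacle, is that the pushout--product $i\square j$ of two cofibrations of $K\cell\ccal$ is again a cofibration of $K\cell\ccal$ rather than merely of $\ccal$. Since $\square$ is compatible with retracts, pushouts and transfinite composites in each variable, I would reduce to $i\colon X\to Y$ and $j\colon Z\to W$ in $I_K$, so that $X,Y,Z,W$ are all $K$--cofibrant. The first clause of the monoidality of $K$ (that $k\otimes k'$ is $K$--cellular for $k,k'\in K$), propagated through a double cellular induction using that $A\otimes -$ is left Quillen on $\ccal$ for $A$ cofibrant and that $K$--cellular objects are closed under the relevant homotopy colimits, shows that each tensor product in the source and target of $i\square j$ is $K$--cellular, and all are cofibrant in $\ccal$; thus $i\square j$ is a cofibration of $\ccal$ between $K$--cofibrant objects. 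To upgrade this, factor $i\square j = p\circ c$ in $K\cell\ccal$ with $c$ a cofibration of $K\cell\ccal$ out of the ($K$--cofibrant) source and $p$ an acyclic fibration of $K\cell\ccal$; then the target of $c$ is $K$--cofibrant, so $p$ is a $K$--cellular equivalence between $K$--cofibrant objects, hence, by the second fact above, an acyclic fibration of $\ccal$. The $\ccal$--cofibration $i\square j$ then lifts against $p$ in $\ccal$, exhibiting $i\square j$ as a retract of $c$ and therefore as a cofibration of $K\cell\ccal$. Together with the easy half this gives the pushout--product axiom, and with the unit axiom and the monoid axiom it completes the proof. The points needing care are the choice of $I_K$ with $K$--cofibrant domains and codomains — for which I would rely on \cite[Theorem 5.9]{brstable} and the construction of right Bousfield localizations in \cite[Chapter 5]{hir03} — and checking that the homotopy colimits used in the cellular inductions are genuinely the same in $\ccal$ and in $K\cell\ccal$.
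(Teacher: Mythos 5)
The paper offers no proof of this statement at all: it is quoted verbatim from \cite[Theorem 7.2]{brstable} and used as a black box, so there is nothing internal to compare against. Your argument is, in substance, a correct reconstruction of the proof in that reference, and the two load-bearing observations are exactly the right ones: (i) since fibrations are unchanged under right localization, the acyclic cofibrations of $K \cell \ccal$ coincide with those of $\ccal$, which disposes of the monoid axiom and the ``acyclic cofibration $\square$ cofibration'' half of the pushout--product axiom for free; and (ii) a cofibration of $\ccal$ between $K$--cofibrant objects is a cofibration of $K \cell \ccal$, proved by your factor--and--retract argument using that a $K$--cellular equivalence between $K$--cofibrant objects is a genuine weak equivalence. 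The only imprecision is your description of the generating cofibrations: in \cite[Theorem 5.9]{brstable} the generating set for $K \cell \ccal$ is $J_\ccal \cup \Lambda(K)$, where $\Lambda(K)$ is the set of horns on $K$, so the elements coming from $J_\ccal$ do \emph{not} have $K$--cofibrant (or even $K$--cellular) domains. This costs you nothing, since any pushout--product involving a map of $J_\ccal$ is an acyclic cofibration of $\ccal$ and hence of $K \cell \ccal$ by your easy half, leaving only pairs from $\Lambda(K)$, whose domains and codomains are $K$--cellular and cofibrant precisely because the set $K$ is stable; but the case split should be stated rather than deferred to ``points needing care.''
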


The next two results are our upgraded version of the cellularization
principle, see  \cite[Proposition 2.7]{gscell}.
They have slightly different assumptions according to whether
the given cells are on the left or right of the adjunction.
The first has the cells on the left and behaves as expected. The second
starts with cells on the right of the adjunction and here we need to assume that
the adjunction is a Quillen equivalence to start with.
In both cases we have also assumed that a cofibrant replacement of the unit is
in the set of cells (and hence is homotopically small). This simplifies the proofs
but is not needed when the adjunction is already a Quillen equivalence.

\begin{proposition}\label{prop:monoidalleft}
Consider a symmetric monoidal Quillen adjunction
between a pair of proper, cellular, stable, monoidal model categories.
\[
\xymatrix{
L : \ccal \ar@<+1ex>[r] & \dcal : R \ar@<+0.5ex>[l] }
\]
Let $K$ be a stable and monoidal set of cofibrant objects of $\ccal$
which contains a cofibrant replacement of the unit.
Assume that each element of $K$ and $LK$ is
homotopically small and that the unit map
$k \to R \fibrep Lk$ is a weak equivalence of $\ccal$ for each $k \in K$.
Then $LK$ is a stable monoidal set of cofibrant objects of
$\dcal$ and the unit of $\dcal$ is in $LK$ (up to weak equivalence).
Moreover, we have an induced symmetric monoidal Quillen equivalence
\[
\xymatrix{
L : K \cell \ccal \ar@<+1ex>[r] & LK \cell \dcal : R. \ar@<+0.5ex>[l] }
\]
\end{proposition}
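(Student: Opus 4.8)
The plan is to split the statement into three assertions and prove them in order: (i) that $LK$ is again a stable, monoidal set of cofibrant objects of $\dcal$ containing a cofibrant replacement of the unit; (ii) that the cellularized adjunction is a Quillen adjunction; and (iii) that it is a Quillen equivalence. The backbone of the argument is the (non-monoidal) Cellularization Principle \cite[Proposition 2.7]{gscell} together with the monoidal cellularization results of \cite{brstable} recalled above; the new content is only the bookkeeping that shows the monoidal hypotheses are preserved.

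First I would check that $LK$ is a legitimate set of cells for a monoidal cellularization of $\dcal$. Each $Lk$ is cofibrant since $L$ is left Quillen and $k$ is cofibrant. Because $K$ contains a cofibrant replacement $\cofrep_K S_\ccal$ of $S_\ccal$ and $L$ is strong monoidal, $L\cofrep_K S_\ccal$ is weakly equivalent to $S_\dcal$, so the unit of $\dcal$ lies in $LK$ up to weak equivalence; in particular the unit is $LK$--cellular, which makes the second clause of monoidality automatic. For the first clause, $Lk \otimes Lk' \cong L(k \otimes k')$ by strong monoidality, and $k \otimes k'$ is $K$--cellular by hypothesis, so it is built from $K$ by (de)suspensions, coproducts and cofibre sequences; applying the left Quillen functor $L$ (which preserves all of these homotopically, being exact on a stable category and a left adjoint) shows $L(k\otimes k')$ is $LK$--cellular. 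Stability of $LK$ (closure of $LK$--cellular objects under desuspension) follows the same way from stability of $K$. The homotopical smallness of the elements of $LK$ is assumed outright. This establishes the parenthetical claims in the statement and lets us invoke \cite[Theorem 7.2]{brstable} to know $LK\cell\dcal$ is a proper, cellular, stable, monoidal model category (satisfying the monoid axiom when $\dcal$ does).

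Next, that $(L,R)$ descends to a Quillen adjunction $K\cell\ccal \rightleftarrows LK\cell\dcal$ is exactly the content of the Cellularization Principle: the fibrations of the cellularized categories are unchanged, $R$ preserves fibrations and trivial fibrations, and one checks $L$ sends $K$--cellular equivalences to $LK$--cellular equivalences using the adjunction isomorphism $[Lk, -]^\dcal_* \cong [k, R(-)]^\ccal_*$ on fibrant objects together with the fact that $R$ of a fibration is a fibration. That it is a Quillen \emph{equivalence} is where the hypothesis ``$k \to R\fibrep Lk$ is a weak equivalence in $\ccal$ for each $k\in K$'' is used: this is precisely the derived-unit condition on generators required by \cite[Proposition 2.7]{gscell}, and since the $k\in K$ generate $K\cell\ccal$ (being homotopically small) and the $Lk$ generate $LK\cell\dcal$, the Cellularization Principle upgrades this to a Quillen equivalence. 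It remains to observe that this Quillen equivalence is \emph{symmetric monoidal}: the left adjoint $L$ is unchanged and is still strong symmetric monoidal, and the unit of $\dcal$ is $LK$--cellular, so the defining conditions for a symmetric monoidal Quillen pair between the cellularized categories hold.

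The main obstacle, and the point deserving the most care, is verifying that $L$ genuinely sends $K$--cellular equivalences to $LK$--cellular equivalences and $K$--cellular objects to $LK$--cellular objects, i.e.\ that ``cellularity transports along $L$''. The clean way to organize this is: $K$--cellular objects are exactly those built from $K$ under homotopy colimits (coproducts and cofibre sequences) and (de)suspensions in the stable category $\ho(\ccal)$; the derived functor of $L$ commutes with all of these and carries $K$ to $LK$; hence it carries $K$--cellular objects to $LK$--cellular objects. Dually, $[Lk,Lf]^\dcal_* \cong [k, R\fibrep Lf]^\ccal_*$ reduces the cellular-equivalence claim to knowing $R\fibrep L$ preserves $K$--cellular equivalences, which follows since $R$ preserves fibrations and weak equivalences between fibrant objects. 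Everything else is routine and can be cited.
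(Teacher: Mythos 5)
Your proof is correct and follows essentially the same route as the paper: invoke the Cellularization Principle of \cite[Proposition 2.7]{gscell} for the Quillen equivalence, verify the two clauses of monoidality of $LK$ using the (co)monoidal structure of $L$ and the weak equivalence $L(\cofrep S_\ccal)\to S_\dcal$, and deduce that the cellularized adjunction is still symmetric monoidal from that same unit map together with the comonoidal map $L(X\otimes Y)\to LX\otimes LY$ on cofibrant objects. The only cosmetic differences are that you write $Lk\otimes Lk'\cong L(k\otimes k')$ where the paper only needs this up to weak equivalence, and that your explicit argument that $L$ transports $K$--cellularity to $LK$--cellularity (building cellular objects from cells by homotopy colimits) spells out a step the paper states more tersely.
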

\begin{proof}
We apply the cellularization principle \cite[Proposition 2.7]{gscell}
to see that $(L,R)$ is a Quillen
equivalence on the cellularized categories.

We must show that $LK$ satisfies both parts of the definition of a monoidal set.
For the first part, let $k$ and $k'$ be objects of $K$. Then $Lk \smashprod Lk'$ is weakly equivalent
to $L(k \smashprod k')$, which is $LK$--cofibrant and hence is $LK$--cellular.
For the second part, the map
$L(\cofrep S_\ccal) \to S_\dcal$ is a weak equivalence since $(L,R)$ is a monoidal
Quillen pair.
Hence $S_\dcal$ is in $LK$ (up to weak equivalence)
and the second condition holds automatically.

Now we know that $LK \cell \dcal$ is an cellular monoidal model category.
We must show that $(L,R)$ is a symmetric monoidal Quillen adjunction
on the cellularized model categories.
We know that the map
$L(\cofrep S_\ccal) \to S_\dcal$ is a weak equivalence.
The comonoidal map $L(X \smashprod Y) \to LX \smashprod LY$
is also a weak equivalence for any cofibrant $X$ and $Y$.
Hence the proof is complete.
\end{proof}

For the following we let $\cofrep$ be the cofibrant replacement functor of  $\ccal$ and
$\fibrep$ the fibrant replacement functor of  $\dcal$.

\begin{proposition}\label{prop:monoidalright}
Consider a symmetric monoidal Quillen equivalence
between a pair of proper, cellular, stable, monoidal model categories
\[
\xymatrix{
L : \ccal \ar@<+1ex>[r] & \dcal : R.  \ar@<+0.5ex>[l]}
\]
Let $H$ be a stable and monoidal set of cofibrant objects of $\dcal$
which contains a cofibrant replacement of the unit of $\dcal$.
Assume that every element of $H$ is homotopically small.
Then $\cofrep R\fibrep H$
is a stable monoidal set of homotopically small cofibrant objects of
$\ccal$ which contains the unit up to weak equivalence.  Furthermore
we have an induced symmetric monoidal Quillen equivalence
\[
\xymatrix{
L : \cofrep R\fibrep H \cell \ccal
\ar@<+1ex>[r]
& H \cell \dcal : R. \ar@<+0.5ex>[l]}
\]
\end{proposition}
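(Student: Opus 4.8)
The plan is to rerun the argument of Proposition~\ref{prop:monoidalleft}, but applying the Cellularization Principle \cite[Proposition 2.7]{gscell} in the form where the cells are prescribed on the \emph{right} of the adjunction. Since we are already assuming that $(L,R)$ is a Quillen equivalence, that principle immediately yields a Quillen equivalence between $\cofrep R\fibrep H \cell \ccal$ and $H \cell \dcal$ with no hypothesis on a derived unit; the real work is to check that $\cofrep R\fibrep H$ is a stable and monoidal set of homotopically small cofibrant objects, so that \cite[Theorem 7.2]{brstable} applies and makes $\cofrep R\fibrep H \cell \ccal$ a proper, cellular, stable, monoidal model category (satisfying the monoid axiom when $\ccal$ does), and then to promote the Quillen equivalence to a symmetric monoidal one.

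First I would dispatch the structural properties. Each $h \in H$ is cofibrant and homotopically small, so its derived image $\mathbb{R}R h \simeq R\fibrep h$ is homotopically small, since derived functors of Quillen equivalences preserve homotopically small objects, and passing to a cofibrant replacement does not change this; hence $\cofrep R\fibrep H$ is a set of homotopically small cofibrant objects of $\ccal$. As $(L,R)$ is a Quillen equivalence, the derived functors $\mathbb{L}L$ and $\mathbb{R}R$ are mutually inverse exact equivalences of the triangulated homotopy categories $\ho\ccal$ and $\ho\dcal$; they carry $H$ to $\cofrep R\fibrep H$ and back up to isomorphism, and commute with suspension and desuspension. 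Transporting cellularity along these equivalences, the class of $\cofrep R\fibrep H$--cellular objects of $\ccal$ corresponds to the class of $H$--cellular objects of $\dcal$, which is closed under desuspension because $H$ is stable; hence $\cofrep R\fibrep H$ is stable. For the unit: since $(L,R)$ is a symmetric monoidal Quillen pair, $L\cofrep S_\ccal \to S_\dcal$ is a weak equivalence, so $\mathbb{R}R S_\dcal \simeq \cofrep S_\ccal$ in $\ho\ccal$; as $H$ already contains a cofibrant replacement of $S_\dcal$, the set $\cofrep R\fibrep H$ contains a cofibrant replacement of $S_\ccal$ up to weak equivalence.

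Next I would check the two conditions making $\cofrep R\fibrep H$ monoidal. Given $h' = \cofrep R\fibrep h_1$ and $h'' = \cofrep R\fibrep h_2$ with $h_1, h_2 \in H$, both cofibrant, the comonoidal comparison map (a weak equivalence on cofibrant objects) together with $\mathbb{L}L h' \simeq h_1$ and $\mathbb{L}L h'' \simeq h_2$ give $\mathbb{L}L(h' \smashprod h'') \simeq \mathbb{L}L h' \smashprod \mathbb{L}L h'' \simeq h_1 \smashprod h_2$; since $H$ is monoidal, $h_1 \smashprod h_2$ is $H$--cellular, so $h' \smashprod h''$ (which is again cofibrant) is $\cofrep R\fibrep H$--cellular after transporting back along $\mathbb{R}R$. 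The second monoidal condition, that $\cofrep S_\ccal \smashprod h \to h$ be a $\cofrep R\fibrep H$--cellular equivalence, then holds automatically because the unit of $\ccal$ is $\cofrep R\fibrep H$--cellular by the previous paragraph. Thus $\cofrep R\fibrep H$ is stable and monoidal, and \cite[Theorem 7.2]{brstable} endows both $\cofrep R\fibrep H \cell \ccal$ and $H \cell \dcal$ with proper, cellular, stable, monoidal model structures.

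Finally I would upgrade the adjunction. The Cellularization Principle gives that $(L,R)$ is a Quillen equivalence between $\cofrep R\fibrep H \cell \ccal$ and $H \cell \dcal$; it is a Quillen adjunction for these structures because $R$ preserves fibrations (unchanged under cellularization) and carries a fibration which is an $H$--cellular equivalence to a fibration which is a $\cofrep R\fibrep H$--cellular equivalence, using the derived adjunction isomorphism $[\cofrep R\fibrep h, RX]^\ccal_* \cong [h, X]^\dcal_*$ together with the derived counit being an isomorphism on $H$. It is symmetric monoidal for the cellularized structures because the weak equivalences of $H \cell \dcal$ contain those of $\dcal$, so the comparison maps $L(X \smashprod Y) \to LX \smashprod LY$ for $X, Y$ cofibrant and $L\cofrep S_\ccal \to S_\dcal$, already weak equivalences in $\dcal$, remain weak equivalences in $H \cell \dcal$. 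The main obstacle is the bookkeeping of the middle two paragraphs: transporting cellularity, stability and the monoidal conditions correctly across the derived equivalence, and in particular verifying that $\cofrep R\fibrep H$ contains the unit of $\ccal$ up to weak equivalence; once those are in place, everything else is a routine rerun of the proof of Proposition~\ref{prop:monoidalleft}.
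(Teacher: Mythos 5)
Your proposal is correct and follows essentially the same route as the paper's proof: apply the Cellularization Principle (using that $(L,R)$ is already a Quillen equivalence), verify that $\cofrep R\fibrep H$ is stable, monoidal, homotopically small and contains the unit up to weak equivalence via the adjoint of $L\cofrep S_\ccal \to S_\dcal$ and the comonoidal comparison maps, and then upgrade to a symmetric monoidal Quillen equivalence. The only difference is presentational — you transport cellularity abstractly across the derived equivalence where the paper argues concretely with the $K$-cofibrant replacement $L\cofrep_K(k\smashprod k') \to L(k\smashprod k') \to Lk\smashprod Lk'$ — but the underlying argument is the same.
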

\begin{proof}
We apply the cellularization principle \cite[Proposition 2.7]{gscell}
to see that $(L,R)$ is a Quillen equivalence on the cellularized categories.
We must prove that $K = \cofrep R\fibrep H$ is a monoidal set
and that the unit of $\ccal$
is in $K$ (up to weak equivalence).

It is simple to check that
$L$ takes $K$--cellular equivalences between cofibrant objects
to $H$--cellular equivalences.
Now consider the pair of maps below for $k$ and $k'$ elements of $K$.
\[
L\cofrep_K (k \smashprod k')
\overset{Lq}{\longrightarrow} L(k \smashprod k')
\overset{c}{\longrightarrow}
Lk \smashprod Lk'
\]
The map $c$ is the comonoidal map
of $L$ and hence is a weak equivalence as $(L,R)$ is monoidal.
Since the codomain of $c$ is $H$--cellular, so is the domain of $c$.
The map $Lq$ is $L$ applied to a $K$--cellular equivalence between cofibrant objects,
hence it is a $H$--cellular equivalence.
We have shown that $Lq$ is a $H$--cellular equivalence between
$H$--cellular objects of $\dcal$ and thus must be a weak equivalence.
Since $(L,R)$ is a Quillen equivalence before cellularization,
$q$ must be a weak equivalence of $\ccal$. Thus
$k \smashprod k'$ must be $K$--cellular.

To complete the proof that $K$ is monoidal it will suffice to prove that
$S_\ccal$ is $K$--cellular. Thus we now show that the unit of $\ccal$
is in $K$ up to weak equivalence.
Since $(L,R)$ is a symmetric monoidal Quillen pair, the composite map
\[
L \cofrep S_\ccal \lra L S_\ccal \lra S_\dcal \lra \fibrep S_\dcal
\]
is a weak equivalence. Hence the adjoint
$\cofrep S_\ccal \to R \fibrep S_\dcal$ is a weak equivalence.
Thus we see that $\cofrep S_\ccal$ is in $K$ up to weak equivalence.
We have now shown that the set $K$ is monoidal and
that $K \cell \ccal$ is a symmetric monoidal model category.

The proof that this adjunction is symmetric monoidal
on the cellularized model categories
follows the same pattern as the previous case.
\end{proof}

\subsection{Application to the classification}\label{sec:application}

We start with the Quillen equivalence of
Proposition \ref{prop:Smodulesequivalence}.
\[
\xymatrix{
S^\bullet \smashprod - :
L_{S_\bQ}(\TSP)
\ar@<+1ex>[r] &
K_{\Top} \cell S^\bullet \leftmod
: \pullback \ar@<+0.5ex>[l] }
\]
The set of cells $K_{\Top}$ is given by
$S^\bullet \smashprod -$
applied to the set $K$ of generators of
$L_{S_\bQ}(S_\cscr \leftmod)$.
We know that this set is stable and every element is homotopically small and cofibrant.
By the proof of Proposition \ref{prop:monoidalleft}, 
it also follows that $K_{\Top}$ is a monoidal set.
Thus we may apply Proposition \ref{prop:monoidalleft} to see that the adjunction 
$(S^\bullet \smashprod -, \pullback)$ is symmetric monoidal.

We then have a large number of symmetric monoidal Quillen
equivalences relating
$S^\bullet \leftmod$ and
$S_a^\bullet \leftmod$.
Our initial set of cells $K_{\Top}$ is monoidal, stable, contains the unit
and every element is homotopically small.
Hence Propositions \ref{prop:monoidalleft} and \ref{prop:monoidalright}
tell us that
$K_{\Top} \cell S^\bullet \leftmod$ and
$K_a \cell S_a^\bullet \leftmod$
are Quillen equivalent via symmetric monoidal
Quillen equivalences.

\begin{theorem}\label{thm:cyclicconclusion}
The model category of rational $\torus$--spectra, $\TSP$,
is Quillen equivalent to the algebraic model $d \acal _{dual}$.
Furthermore, these Quillen equivalences are all symmetric monoidal.
Hence the homotopy categories of  $\TSP$
and $d \acal _{dual}$ are equivalent as symmetric monoidal categories.
\end{theorem}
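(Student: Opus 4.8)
The plan is to concatenate the zig-zag of Quillen equivalences built in Sections \ref{sec:cyclic}--\ref{subsec:torsion} and then upgrade the whole chain to symmetric monoidal Quillen equivalences using the monoidal cellularization principle of Propositions \ref{prop:monoidalleft} and \ref{prop:monoidalright}. Concretely, the chain runs $L_{S_\bQ}(\TSP) \simeq K_{\Top}\cell S^\bullet\leftmod \simeq K_{\Top}^{\torus}\cell S^\bullet_{\Top}\leftmod \simeq K_t\cell S_t^\bullet\leftmod \simeq K_a\cell S_a^\bullet\leftmod \simeq d\acal_{dual}$, where the first link is Proposition \ref{prop:Smodulesequivalence}, the second is Corollary \ref{cor:removeequivariancecell}, the third is the zig-zag of Corollary \ref{cor:gettingtoalgebra}, the fourth is the sequence of equivalences of Section \ref{subsec:ramod} (the module formality argument replacing $S_t$ by $\ocal_\fcal$, followed by removing the localization via Lemma \ref{zamiastLocBedzieInnyPierscien}), and the last is Proposition \ref{prop:conclusion}. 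Composing these already proves the first assertion of the theorem.

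For the monoidal refinement I would walk along this chain maintaining an inductive package: at each stage the ambient (uncellularized) model category is proper, cellular, stable, monoidal and satisfies the monoid axiom, and the set of cells is stable, monoidal, consists of homotopically small cofibrant objects and contains the monoidal unit up to weak equivalence. The base case holds: $L_{S_\bQ}(\TSP)$ has the required properties by \cite{mm02}, and $K=\{\Sigma^k S^0\}\cup\{\Sigma^k\sigma_n\mid n>1\}$ is stable, homotopically small, cofibrant, unital and monoidal --- the last point using the splitting $\torus/C_{n+}=\bigvee_{C_m\subseteq C_n}\sigma_m$ to decompose smash products of the $\sigma_n$ into cells. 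Propagating one link at a time, I would invoke Lemma \ref{lemma:QEondiag} for the diagram-of-model-categories steps (noting that each vertex carries the required structure) together with Proposition \ref{prop:monoidalleft} when the given cells sit on the left of the adjunction and Proposition \ref{prop:monoidalright} when they sit on the right (in the latter case one first records that the adjunction is already a Quillen equivalence, which it is by the cited results). Each of these propositions delivers exactly the inductive package on the next category, and in particular that the induced adjunction on cellularizations is a symmetric monoidal Quillen equivalence. Since $d\acal_{dual}$ has the required structure by Theorem \ref{thm:modelstructure}, the induction closes and every link of the chain is a symmetric monoidal Quillen equivalence.

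Finally, a symmetric monoidal Quillen equivalence descends to a strong symmetric monoidal equivalence of homotopy categories, and a zig-zag of these composes to one; applied to our chain this gives an equivalence of symmetric monoidal categories $\ho(L_{S_\bQ}\TSP)\simeq\ho(d\acal_{dual})=D(\acal(\torus))$, which is the last assertion.

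The part I expect to be most delicate is not a single hard step but the uniform verification that the monoidal cellularization principle applies at every link. One must repeatedly check that the cellularized categories remain proper, cellular, stable, monoidal and monoid-axiom categories, that the transported set of cells is genuinely monoidal --- i.e. that smash products of two cells are cellular, the one place where the explicit decomposition of the $\sigma_n$ is really used --- and that the unit stays among the cells up to weak equivalence so the second clause in the definition of a monoidal set is automatic. A minor additional subtlety is correctly alternating between Propositions \ref{prop:monoidalleft} and \ref{prop:monoidalright}, since the natural side of the cells changes as one moves from the topological to the algebraic end of the chain.
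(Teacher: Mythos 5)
Your proposal is correct and follows essentially the same route as the paper: Section \ref{sec:application} concatenates Proposition \ref{prop:Smodulesequivalence}, Corollary \ref{cor:removeequivariancecell}, Corollary \ref{cor:gettingtoalgebra}, Section \ref{subsec:ramod} and Proposition \ref{prop:conclusion}, and then propagates the monoidality of the cells along the chain via Propositions \ref{prop:monoidalleft} and \ref{prop:monoidalright}, exactly as in your inductive package. The only cosmetic difference is that you verify monoidality of the initial cell set directly via the splitting of $\torus/C_{n+}$, where the paper appeals to the proof of Proposition \ref{prop:monoidalleft}; both are fine.
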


\begin{proof}
This now follows by combining Proposition \ref{prop:Smodulesequivalence}, Corollary \ref{cor:removeequivariancecell}, Corollary \ref{cor:gettingtoalgebra}, Section \ref{subsec:ramod}  and Proposition  \ref{prop:conclusion}  with  Propositions \ref{prop:monoidalright} and \ref{prop:monoidalleft}.
\end{proof}


\end{document}